\begin{document}

\title{Examples of non-commutative Hodge~structures}

\author[C.~Hertling]{Claus Hertling}
\address{Lehrstuhl für Mathematik VI\\
Universität Mannheim\\
Seminargebäude A~5, 6\\
68131 Mannheim\\
Germany}
\email{hertling@math.uni-mannheim.de}

\author[C.~Sabbah]{Claude Sabbah}
\address{UMR 7640 du CNRS\\
Centre de Mathématiques Laurent Schwartz\\
École polytechnique\\
F--91128 Palaiseau cedex\\
France}
\email{sabbah@math.polytechnique.fr}
\urladdr{http://www.math.polytechnique.fr/~sabbah}

\thanks{This research was supported by the grant ANR-08-BLAN-0317-01 of the Agence nationale de la recherche.}

\begin{abstract}
We show that\addedo{, under a condition called minimality,} if the Stokes matrix of a connection with a pole of order two and no ramification gives rise, when added to its adjoint, to a positive semi-definite Hermitian form, then the associated integrable twistor structure (or TERP structure, or non-commutative Hodge structure) is pure and polarized.
\end{abstract}

\subjclass{14D07, 34M40}

\keywords{Hermitian pairing, Laplace transform, meromorphic connection, Stokes matrix, non-commutative Hodge structure, TERP structure, twistor structure, variation of polarized Hodge structure}

\maketitle
\vspace*{-.5cm}
\tableofcontents
\mainmatter

\section*{Introduction}
It is relatively easy to produce examples of variations of polarized Hodge structures on the complement $\Afu\moins C$ of a finite set~$C$ in the complex affine line $\Afu$. The simplest ones consist of variations of type $(0,0)$, that is, flat holomorphic bundles on $\Afu\moins C$ with a flat Hermitian metric, together with a flat real (\resp rational, \resp integral) structure, depending on whether the Hodge structures are real (\resp rational, \resp integral). Equivalently, such variations are in one-to-one correspondence with~$\RR$- (\resp $\QQ$-, \resp $\ZZ$-) local systems on $\Afu\moins C$ whose monodromy representation takes values in the unitary group (up to conjugation). Other classical variations arise whenever one is given a projective morphism $f:X\to\Afu$ on a smooth complex quasi-projective variety $X$ and~$C$ is the set of critical values of $f$, as Gauss-Manin systems of $f$.

Recently, a generalization of the notion of variation of polarized Hodge structures has been considered under the names of variation of integrable polarized twistor structure (generalizing \emph{complex} variations of polarized Hodge structures, \cf \cite{Bibi01c}, \cite{Mochizuki08b}), variations of \replacedo{pure polarized TERP}{tr-TERP} structures (generalizing \emph{real} variations of polarized Hodge structures, \cf \cite{Hertling01,H-S06,H-S08b}), and variations of non-commutative Hodge structures (generalizing \emph{rational} variations of polarized Hodge structures, \cf \cite{K-K-P08}); the case with a $\ZZ$-structure has also been considered in \cite{Iritani09}.

The interest of such generalizations comes from the following observations.
\begin{enumerate}
\item
While variations of polarized Hodge structures degenerate with regular singularities, the previous generalizations may degenerate with irregular singularities, and thus can extend the scope of the theory. In particular, Fourier-Laplace transform\addedo{ation} can be extended to such objects (\cf \cite{Bibi05,Bibi08}) and they form part of the larger family of wild twistor $\cD$-modules (\cf \cite{Mochizuki08} and also \cite{Bibi06b}).
\item
Mirror symmetry produces such structures in quantum cohomology (\cf \cite{C-V91,C-F-I-V92,C-V93,Iritani09b}.
\item
These structures are convenient to adapt the techniques of classical Hodge theory (in particular period mappings) to the local analytic settings attached to isolated singularities of complex hypersurfaces (classifying spaces of Brieskorn lattices, \cf \cite{Hertling99b,Hertling01,Hertling06,H-S06,H-S07,H-S08,H-S08b}).
\end{enumerate}

An integrable twistor structure consists of a germ of holomorphic bundle on a disc with coordinate $\hb$ (say), equipped with
\begin{itemize}
\item
a meromorphic connection having a pole of order at most two at the origin \deletedo{(``without ramification'', see below)} and no other pole,
\item
a nondegenerate \addedn{bi}linear pairing between the underlying local system on $\{\hb
\neq0\}$ and the pull back by $\iota:\hb\mto-\hb$ of its conjugate local system \addedo{which satisfies a skew-Hermitian property} (we call such a pairing a \emph{$\iota$-\replacedo{skew-Hermitian}{sesquilinear} pairing} on the local system).
\end{itemize}
These data allow one to construct in a natural way (\emph{twistor gluing}) a holomorphic vector bundle on $\PP^1$. When this bundle is trivial, we say (\cf \cite{Simpson97}) that the twistor structure is \emph{pure of weight $0$}\deletedo{ and polarized}. \addedo{The construction then equips the space of global sections of this bundle with a nondegenerate Hermitian pairing. If this pairing is positive definite, we say that the pure twistor structure is \emph{polarized}. In the following, ``pure and polarized'' will usually mean ``pure of weight $0$'' and polarized.}

The Riemann-Hilbert correspondence for meromorphic connections with slope one (as only positive slope) and no ramification (that we will call below \emph{of exponential type}, like in \cite{K-K-P08}, \cf \eg\cite[Lemma 1.5]{Bibi08} for the relation with regularity after Laplace transform\addedo{ation}), enriched with such a pairing, allows one to encode the data of the meromorphic bundle and the pairing in a block-upper triangular matrix $\Sigma$ \addedm{(the unipotent Stokes matrix multiplied by the ``square root'' of the formal monodromy)} with invertible diagonal blocks\addedo{ and a set of exponential factors}. It remains to choose, within the meromorphic bundle, a holomorphic bundle on which the connection has a pole of order at most two. If the connection is of exponential type, a canonical holomorphic bundle is provided by the Deligne-Malgrange lattice (with the choice $(0,1]$ for the real part of the eigenvalues of the residues, \cf\S\ref{subsec:BriDe}). Therefore, such a matrix $\Sigma$ also determines the Deligne-Malgrange lattice.

Our main result (Theorem \ref{th:main}) answers Conjecture 10.2 in \cite{H-S06}: \addedo{if an arbitrary set of exponential factors is given and} if $\Sigma$ as above is such that $\Sigma+{}^t\ov\Sigma$ is positive semi-definite\addedn{ and satisfies a property called minimality}\addedm{ (\cf Definition \ref{def:critminext})}, then the integrable twistor structure \replacedo{which they}{it} determine\deletedo{s} (with the Deligne-Malgrange lattice) is pure of weight~$0$ and polarized.\addedm{ In fact, the statement that we give slightly relaxes this minimality property.} Note that if $\Sigma$ is real (\resp rational), the corresponding integrable twistor structure is then a \replacedo{pure polarized TERP}{tr-TERP} structure in the sense of \cite{Hertling01} (\resp a non-commutative Hodge structure in the sense of \cite{K-K-P08}).

The question of how to compute as explicitly as possible the `new supersymmetric index' of Cecotti and Vafa \cite{C-F-I-V92} for such a polarized pure twistor structure remains open (\cf \cite{Hertling01,Bibi01c,Bibi08,Mochizuki08b} for the definition and some properties in the present setting).

The proof of Theorem \ref{th:main} consists in showing that the integrable twistor structure determined by $\Sigma$ is nothing but the twistor structure associated to the Laplace transform of a regular holonomic module with a flat Hermitian form on its smooth part. We essentially identify the restriction of this Hermitian form to the fibre at some general point with the form defined by $\Sigma+{}^t\ov\Sigma$. If it is positive\addedo{ definite}, then the flat bundle has a Hermitian metric, and it follows from \cite{Bibi05} that the twistor structure corresponding to the Fourier-Laplace transform is pure of weight $0$ and polarized.

We use the algebraic/analytic version of the Laplace transform\addedo{ation}, as it is simpler to prove the Fourier inversion formula in this setting. A topological version of the Laplace transform\addedo{ation} (homological with Lefschetz thimbles or cohomological like in \cite{K-K-P08}\replacedo{ and}{) is also available (}including the Stokes structure)\addedo{ also exists}, but \replacedo{we did not find a complete reference for}{at the moment there is no direct proof of} the \addedo{corresponding} Fourier inversion formula \replacedo{in this purely topological setting}{for it}.

\section{Polarized pure twistor structure attached to a flat unitary bundle}
In this section, we will recall some of the results of \cite{Bibi05} in the particular case of a variation of polarized pure Hodge structure of type $(0,0)$ (flat unitary bundle). The consequence of these results, given by Corollary \ref{cor:N}, will be our main tool for proving Theorem \ref{th:main}.

\subsection{Sesquilinear pairings on $\Clt$-modules}\label{subsec:sesqM}
Let~$\Afu_t$ be the complex affine line with coordinate~$t$ and let $C=\{c_1,\dots,c_r\}\subset\Afu_t$ be a finite set of points. We denote by $\Clt$ the Weyl algebra of the variable $t$, by $\cD_{\PP^1}$ the sheaf of holomorphic differential operators on $\PP^1$, and by $\cD_{\PP^1}(*\infty)$ its localization at infinity, so that $\Clt=\Gamma(\PP^1,\cD_{\PP^1}(*\infty))$. Recall that the classical Riemann-Hilbert correspondence gives an equivalence between the following categories \eqref{enum:RH1}--\eqref{enum:RH3}, and an extension of it to $\cD$-modules together with a GAGA argument gives the equivalence with \eqref{enum:RH4} and~\eqref{enum:RH5}:
\begin{enumerate}
\item\label{enum:RH1}
Locally constant sheaves $\cV$ of finite dimensional $\CC$-vector spaces on $\Afu_t\moins C$ (that we call local systems for short),
\item\label{enum:RH2}
holomorphic flat bundles with connection $(V,\nabla)$ on $\Afu_t\moins C$,
\item\label{enum:RH3}
locally free $\cO_{\PP^1}(*C\cup\{\infty\})$-modules $\wt\cM$ with regular singular connection,
\item\label{enum:RH4}
regular holonomic $\cD_{\PP^1}$-modules $\cM$ with singularities at $C\cup\{\infty\}$, which are minimal extension\addedo{s} at~$C$ (\ie have neither sub nor quotient module supported on~$C$) and maximal extension\addedo{s} at $\infty$ (\ie are $\cD_{\PP^1}(*\infty)$-modules),
\item\label{enum:RH5}
regular holonomic $\Clt$-modules~$M$ with singularities at~$C$ and \addedo{which} have neither sub nor quotient module\addedo{s} supported on~$C$.
\end{enumerate}

This correspondence extends to a correspondence with sesquilinear pairing as follows. Let $\cS'(\Afu_t)$ be the Schwartz space of tempered distributions on~$\Afu_t$. This is the space of global sections of the sheaf $\Db_{\PP^1_t}^{\rmod\infty_t}$ on $\PP^1_t$ of distributions on~$\Afu_t$ which have moderate growth at infinity (\addedo{on any open set $U$ of $\PP^1_t$, its space of sections is the dual of the space of $C^\infty$ functions with compact support on $U$ having rapid decay at infinity;} it can be regarded as the quotient of the sheaf of distributions on~$\PP^1_t$ modulo distributions supported at infinity \addedo{and is also equal to the localized sheaf $\cO_{\PP^1}(*\infty)\otimes_{\cO_{\PP^1}}\Db_{\PP^1_t}$, according to the division property of distributions by holomorphic functions}). We will also consider the sheaf $\Db_{\PP^1_t}^{\rmod C\cup\infty_t}$ on $\PP^1_t$ of distributions on $\Afu_t\moins C$ having moderate growth at $C\cup\{\infty_t\}$.

Then, any sesquilinear pairing $h\addedn{_\rB}:\cV'\otimes_\CC\ov\cV{}''\to\CC_{\Afu_t\moins C}$ between the local systems~ $\cV'$ and $\cV''$ (where $\ov\cV{}''$ denotes the conjugate local system and ``sesquilinear'' means that~$h\addedn{_\rB}$ is a $\CC$-linear morphism) induces in a unique way a sesquilinear pairing\addedn{~$h$} on the minimal extensions taking values in the Schwartz space of tempered distributions on~$\Afu_t$ and which is linear with respect to the natural $\Clt\otimes_\CC\ov{\Clt}$-action on both the source and the target. Indeed, it is easy to extend~$h\addedn{_\rB}$ as a~$\nabla$-flat sesquilinear pairing $\addedn{h:}V'\otimes_\CC\ov V{}''\to\cC^\infty_{\Afu_t\moins C}$, \ie which satisfies
\[
h(\nabla v',\ov{v''})=\partial h(v',\ov{v''})\quad\text{and}\quad h(v',\ov{\nabla v''})=\ov\partial h(v',\ov{v''})
\]
for all local sections $v',v''$ of $V',V''$. Since any local meromorphic basis of $\wt\cM$ can be expressed with coefficients having moderate growth in any basis of local horizontal sections (according to the regularity of the connection), the pairing extends as a sesquilinear pairing between $\wt\cM{}'$ and the conjugate of $\wt\cM{}''$ taking values in the sheaf of distributions on~$\PP^1_t$ having moderate growth at $C\cup\{\infty_t\}$ (sesquilinearity means $\cD_{\PP^1_t}(*\infty_t)\otimes\ov{\cD_{\PP^1_t}(*\infty_t)}$-linearity). The latter induces such a pairing between the minimal extensions $\cM',\ov\cM{}''$ with values in $\Db_{\PP^1_t}^{\rmod C\cup\infty_t}$. A local inspection of the values of this pairing near the points of~$C$ shows that it can be canonically lifted as a pairing with values in the sheaf on $\PP^1_t$ of distributions on \replacedo{$\Afu_t$}{$\PP^1_t$} with moderate growth at infinity\deletedo{ only}. Taking global sections on~$\PP^1_t$ gives a sesquilinear pairing $\addedn{h:}M'\otimes\ov M{}''\to\cS'(\Afu_t)$. Going in the other direction from~$M$ to $\cV$ is easier\addedn{ via the de~Rham functor, and we denote by $h_{\DR}$ the corresponding form}.

Let $h:M'\otimes_\CC\ov M{}''\to\cS'(\Afu_t)$ be a sesquilinear pairing between holonomic $\Clt$-modules. We can view~$h$ as a $\Clt$-linear morphism $M'\to\Hom_{\ov{\Clt}}(\ov M{}'',\cS'(\Afu_t))$, where the latter module is equipped with the $\Clt$-module structure coming from that on $\cS'(\Afu_t)$. It is known (but not used now) that $\Hom_{\ov{\Clt}}(\ov M{}'',\cS'(\Afu_t))$ is also a holonomic $\Clt$-module (\cf \cite[Cor\ptbl II.3.4.2]{Bibi97}). We will say that~$h$ is \emph{nondegenerate} if it induces an isomorphism $M'\isom\Hom_{\ov{\Clt}}(\ov M{}'',\cS'(\Afu_t))$. If $M'=M''$, the definition of ``Hermitian'' is the obvious one.

Similarly, one can define the notion of ``nondegenerate'' and ``Hermitian'' at all steps of the Riemann-Hilbert correspondence above. It is easy to see that if $h:M'\otimes_\CC\nobreak\ov M{}''\to\cS'(\Afu_t)$ is nondegenerate, then so is $h\addedn{_{\DR}}:\cV'\otimes\ov\cV{}''\addedo{\to \addedn{\CC}_{\Afu_t\moins C}}$ (by sheafifying the morphism $M'\to\Hom_{\ov{\Clt}}(\ov M{}'',\cS'(\Afu_t))$ and restricting to $\Afu_t\moins C$). The converse also holds, but we will not need it in this article (in a special case the result follows from Lemma \ref{lem:RHh} below).

Let us also notice that, if a $\Clt$-module~$M$ has regular singularities at $C\cup\{\infty\}$ and is equipped with a nondegenerate sesquilinear pairing~$h$, then~$M$ is a minimal extension at its singularity set~$C$ if and only if~$M$ has no submodule supported by~$C$ (a quotient module supported by~$C$ would produce a submodule of $\Hom_{\ov{\Clt}}(\ov M,\cS'(\Afu_t))\simeq M$ supported by~$C$).

Lastly, we remark that if~$h$ is Hermitian and nondegenerate on~$M$, it is so on~$V$\addedo{ (and the connection on $V$ is the holomorphic part of \addedn{the} Chern connection of~$h$)}, and then it is positive definite at one fibre of $V$ if and only if it is so at any fibre of~$V$ (because $\Afu_t\moins C$ is connected). In such a case, $V$ is a holomorphic vector bundle on $\Afu_t\moins C$ with a flat Hermitian metric~$h$\deletedo{ (so that the connection on $V$ is the holomorphic part of Chern connection of~$h$)}. By the Riemann-Hilbert correspondence (taking horizontal sections), it corresponds to a locally constant sheaf $\cV$ of complex vector spaces on $\Afu_t\moins C$ whose monodromy is unitary, that is, whose associated monodromy representation takes values, up to conjugation, in the unitary group. In particular the representation is semi-simple and, going back through the Riemann-Hilbert correspondence, the corresponding $\Clt$-module~$M$ is semi-simple.

\subsection{Laplace transform\addedo{ation} and sesquilinear pairings}\label{subsec:LaplaceD}
Let~$M$ be a holonomic $\Clt$-module and let $N=\Fou M$ be its Laplace transform with kernel $e^{-t\tau}$: by definition,~$\Fou M$ coincides with~$M$ as a $\CC$-vector space and the $\Cltau$ action is defined by $\tau\cdot m=\partial_tm$, $\partial_\tau m=-tm$. It is known (\cf \eg \cite[Chap\ptbl V]{Malgrange91}) that Laplace transform\addedo{ation} gives a one-to-one correspondence between regular holonomic $\Clt$-modules and holonomic $\Cltau$-modules with a regular singularity at $\tau=0$ and an irregular one of exponential type at infinity, in the following sense. Let us set $G=\CC[\tau,\tau^{-1}]\otimes_{\CC[\tau]}\Fou M$ and $\hb=\tau^{-1}$. This is a free $\CC[\tau,\tau^{-1}]$-module of finite rank equipped with a connection. Then $\wh G\defin\CC\lcr\hb\rcr\otimes_{\CC[\hb]}G$ is a free $\CC\lpr\hb\rpr$-vector space with connection, isomorphic to
\[
\tbigoplus_{c\in C}(\cE^{\addedo{-}c/\hb}\otimes\wh R_c)
\]
(called the \emph{Levelt-Turrittin decomposition}), where $R_c$ has a regular connection and $\cE^{\addedo{-}c\addedn{/\hb}}\defin(\CC\lcr\hb\rcr,\replacedo{d-cd(1/\hb)}{})$.

We will denote by~$\Afu_t$ (\resp $\Afu_\tau$) the affine line with coordinate $t$ (\resp $\tau$) and by
\[
F_t:\cS'(\Afu_t)\to\cS'(\Afu_\tau)
\]
the Fourier transform\addedo{ation} of tempered distributions with kernel $e^{\ov{t\tau}-t\tau}\itwopi dt\wedge d\ov t$. \addedo{Recall that, given a function $\chi(\tau)$ in the Schwartz space $\cS(\Afu_\tau)$ (\ie $\chi(\tau)$ $C^\infty$, rapidly decaying as well as all its derivatives when $\tau\to\infty$), we set $\psi=\chi(\tau)d\tau\wedge d\ov\tau$ and, for $T\in\cS'(\Afu_t)$,
\[
\langle F_t\addedo{T},\psi(\tau)\rangle\defin\langle \addedo{T},(F_\tau\psi)\itwopi dt\wedge d\ov t\rangle,\quad \text{with }(F_\tau\psi)(t)=\int_{\Afu_\tau} e^{\ov{t\tau}-t\tau}\psi(\tau) \in \cS(\Afu_t).
\]
}

\addedo{The Fourier transform $F_t$ is an isomorphism between $\cS'(\Afu_t)$ and $\cS'(\Afu_\tau)$. Moreover, defining similarly $F_\tau:\cS'(\Afu_\tau)\to\cS'(\Afu_t)$ with the kernel $e^{\ov{t\tau}-t\tau}\itwopi d\tau\wedge d\ov\tau$, we have
\[
F_t^{-1}=\ov F_\tau
\]
(where $\ov F_\tau$ has kernel $e^{t\tau-\ov{t\tau}}\itwopi d\tau\wedge d\ov\tau$). Indeed, it is enough to check the dual relation for the Fourier transform of functions in the Schwartz class\addedn{es} $\cS(\Afu_t)$ and $\cS(\Afu_\tau)$. Let us set $t=(x+iy)/\sqrt2$ and $\tau=(\xi+i\eta)/\sqrt2$. Let $\varphi=\chi(x,y)dx\wedge dy$ \replacedn{with $\chi$}{be} in the Schwartz class on $\Afu_t$. If we set $s=(u+iv)/\addedn{\sqrt2}$, the assertion amounts to
\[
\bigg[\int_{\Afu_\tau}e^{\ov{t\tau}-t\tau}\Big(\int_{\Afu_t}e^{s\tau-\ov{s\tau}}\varphi(u,v)\Big)\itwopi d\tau\wedge d\ov\tau\bigg]\itwopi dt\wedge d\ov t=\varphi(x,y),
\]
or equivalently
\[
\frac{1}{4\pi^2}\int_{\Afu_\tau}e^{-i(x\eta+y\xi)}\Big(\int_{\Afu_t}e^{i(u\eta+v\xi)}\chi(u,v)du\wedge dv\Big)d\xi\wedge d\eta=\chi(x,y).
\]
Here, $\Afu$ is oriented with its complex\deletedn{ed} structure, so that if we denote by $du\cdot dv$ the Lebesgue measure and $|\Afu|=\RR^2$ without orientation, we have $\int_{\Afu_t}\cbbullet\ du\wedge dv=\int_{|\Afu_t|}\cbbullet\ du\cdot dv$, so our assertion reduces to the standard Fourier inversion formula for functions in the Schwartz class of $\RR^2$.
}

It is well-known that $F_t$ and $\ov F_\tau$ are linear with respect to the $\Clt\otimes_\CC\ov{\Clt}$-action on $\cS'(\Afu_t)$ and the $\Cltau\otimes_\CC\ov{\Cltau}$-action on $\cS'(\Afu_\tau)$ via the correspondence $\partial_t\leftrightarrow\tau$, $t\leftrightarrow-\partial_\tau$ defined above.

If $h:M'\otimes_\CC\ov M{}''\to\cS'(\Afu_t)$ is a sesquilinear pairing, we define the Fourier transform~$\Fou h$ as the composition $F_t\circ h$. \replacedo{In order to interpret $\Fou h$ as a $\Cltau\otimes_\CC\ov{\Cltau}$-linear morphism, and thus to keep sesquilinearity, we have to use the kernels $e^{-t\tau}$ on~$M'$ and $e^{\ov{t\tau}}$ on~$\ov M{}''$, that is, to regard $\Fou h$ as a pairing from the Laplace transform of~$M'$ and the conjugate of the \emph{inverse} Laplace transform of $M''$, which is nothing but $\iota^+\ov N{}''$, if we denote by~$\iota$ the involution $\tau\mto-\tau$. We therefore view $\Fou h$ as sesquilinear pairing $N'\otimes_\CC\iota^+\ov N{}''\to\cS'(\Afu_\tau)$}{This is a sesquilinear pairing $N'\otimes_\CC\iota^+\ov N{}''\to\cS'(\Afu_\tau)$, where~$\iota$ denotes the involution $\tau\mto-\tau$, because $\Fou\ov M=\iota^+\ov{\Fou M}$}. Note that we recover~$h$ as $\ov F_\tau F_t h$. We will also set $\varh\defin\itwopi\Fou h$. It is important to notice that $\varh$ (or~$\Fou h$) is nondegenerate if and only if~$h$ is so. This follows from the fact that $F_t$ is an isomorphism.

We also remark that, if $M'=M''=M$,~$h$ is Hermitian if and only if~$\Fou h$ is $\iota$\nobreakdash-Hermitian. Indeed, $\iota$ induces an involution $\iota^*:\cS'(\Afu_\tau)\isom\cS'(\Afu_\tau)$ and we have $F_t(\ov{\addedo{T}})=\iota^*\ov{F_t\addedo{T}}$ for $\addedo{T}\in\cS'(\Afu_t)$. This is also equivalent to $\varh=\itwopi\Fou h$ being $\iota$-skew-Hermitian\addedm{ (the choice of the sign $+i$ is irrelevant here, it will be justified by the comparison lemma \ref{lem:hBc})}.

\subsection{A criterion on~$\Fou M$ asserting that~$M$ is a minimal extension}
Let~$M$ be a regular holonomic $\Clt$-module with singularities at~$C$\deletedn{, equipped with a nondegenerate sesquilinear pairing~$h$}. Let $N=\Fou M$ be its Laplace transform and set $G=\CC[\tau,\tau^{-1}]\otimes_{\CC[\tau]}N$ as above.

\begin{lemme}\label{lem:critminext}
\addedn{Let us assume that
\begin{itemize}
\item
$M$ is equipped with a nondegenerate sesquilinear pairing~$h$,
\item
$N=\Fou M$ is a minimal extension at $\tau=0$ (its regular singularity).
\end{itemize}}
\replacedn{Then}{With these assumptions,}~$M$ is a minimal extension if and only if $G$  has no rank-one $\CC[\tau,\tau^{-1}]$-submodule stable by~$\nabla$ on which the monodromy is the identity.
\end{lemme}

\begin{proof}
Because of the existence of~$h$,~$M$ is a minimal extension if and only if it has no submodule isomorphic to $\Clt/\Clt(t-c)$, with $c\in C$. This is equivalent to asking that~$N$ has no submodule isomorphic to $(\CC[\tau],\addedo{d-cd\tau})$.\addedn{ On the other hand, any $\CC[\tau,\tau^{-1}]$-submodule of $G$ stable by~$\nabla$ has a regular singularity at the origin and has exponential type at infinity. If such a module has rank one and if the monodromy is the identity, it must be equal to $(\CC[\tau,\tau^{-1}],d-cd\tau)$ for some $c\in \CC$ (in fact some $c\in C$).}

\addedn{Assume that $M$ is a minimal extension. If we had a submodule $(\CC[\tau,\tau^{-1}],d-cd\tau)$ in $G$, then $(\CC[\tau],d-cd\tau)$ would be a $\Cltau$-submodule of $G$. Since $N$ is a minimal extension at $\tau=0$, it is included in $G$. Since the intersection in $G$ of $(\CC[\tau],d-cd\tau)$ and~$N$ is non-zero (because it is non-zero after localization), and since $(\CC[\tau],d-cd\tau)$ is a simple $\Cltau$-module, $(\CC[\tau],d-cd\tau)$ would be contained $N$. By inverse Laplace transform, $M$ would have a submodule supported on $C$, a contradiction.}

\replacedn{Conversely, assume that $G$ is as in the lemma. Then $N$ does not have any $\Cltau$-submodule isomorphic to $(\CC[\tau],d-cd\tau)$ (otherwise, by localization, it would produce a $(\CC[\tau,\tau^{-1}],d-cd\tau)$ in~$G$). By inverse Laplace transform, $M$ has no sub-module supported on $C$.}{Because of the existence of~$h$,~$M$ is a minimal extension if and only if it has no submodule isomorphic to $\Clt/\Clt(t-c)$, with $c\in C$. This is equivalent to asking that~$N$ has no submodule isomorphic to $(\CC[\tau],\addedo{d-cd\tau})$, and it can be checked that this is equivalent to asking that $G$ has no $\CC[\tau,\tau^{-1}]$-submodule stable by~$\nabla$ isomorphic to $(\CC[\tau,\tau^{-1}],\addedo{d-cd\tau})$.}
\end{proof}

\begin{remarque}
In particular, if we assume that $1$ is not an eigenvalue of the monodromy on $(G,\nabla)$, then the condition of the lemma is fulfilled and~$M$ is a minimal extension.

\deletedo{The condition that~$N$ is a minimal extension at $\tau=0$ also reads on~$M$: assuming the existence of a nondegenerate~$h$ on~$M$ (equivalently, of $\varh$ on~$N$), there should be no $m\in M$ such that $\partial_tm=0$ (equivalently, $(\CC[t],d)$ is not a $\Clt$-submodule of~$M$).}
\end{remarque}

\subsection{The Brieskorn lattice of a Deligne lattice}\label{subsec:BriDe}
Let~$M$ be a regular holonomic $\Clt$-module with singularities at~$C$. Assume that~$M$ is a minimal extension at~$C$. Assume also that the eigenvalues of the local monodromies of the corresponding local system $\cV$ have absolute value equal to one (this property holds if $\cV$ is unitary). Let us denote by $V^{>-1}M$ the free $\CC[t]$-submodule of~$M$ satisfying the following two properties:
\begin{enumerate}
\item
the connection~$\nabla$ on~$M$ induces a logarithmic connection on $V^{>-1}M$,
\item
the eigenvalues of the residues at~$C$ (which are real by the assumption on the local monodromies) belong to $(-1,0]$.
\end{enumerate}
Because~$M$ is assumed to be a minimal extension, it is generated, as a $\Clt$-module, by $V^{>-1}M$.

The \emph{Brieskorn lattice} $G_0$ attached to $V^{>-1}M$ is, by definition, the $\CC[\tau^{-1}]$-submodule of $G$ generated by the image of $V^{>-1}M$ in $G$ via the localization morphism $M=\Fou M\to G$.

Each $\wh R_c$ in the Levelt-Turrittin decomposition of $\wh G$ has a formal Deligne lattice $V^{>0}\wh R_c$ which is the unique logarithmic lattice for which the eigenvalues of the residue of the connection belong to $(0,1]$, and therefore (according \eg to \cite[Prop\ptbl2.1]{Malgrange04}) $G$ has a unique $\CC[\hb]$-lattice $\addedm{\DM}^{>0}G$ whose associated formal lattice is $\bigoplus_{c\in C}(\cE^{\addedo{-}c\addedn{/\hb}}\otimes\nobreak V^{>0}\wh R_c)$. We call $\addedm{\DM}^{>0}G$ the \emph{Deligne-Malgrange lattice} of $G$ at $\infty$.

\begin{lemme}\label{lem:BDM}
We have $G_0=\addedm{\DM}^{>0}G$.
\end{lemme}

\begin{proof}
It is known that $\CC\lcr\hb\rcr\otimes_{\CC[\hb]}G_0$ decomposes as $\bigoplus_{c\in C}(\cE^{\addedo{-}c\addedn{/\hb}}\otimes (V^{>-1}M)^\mu_c)$, where $(V^{>-1}M)^\mu_c$ is the formal microlocalization of $V^{>-1}M$ at~$c$ (\cf \cite[Prop\ptbl2.3]{Bibi96bb}). The identification of $(V^{>-1}M)^\mu_c$ with $V^{>0}\wh R_c$ is then standard.
\end{proof}

\subsection{Twistor gluing}\label{subsec:tw}
Let $(\cH,\nabla)$ be a free $\CC\{\hb\}$-module of finite rank with a meromorphic connection having a pole of order $\leq2$ at $\hb=0$. We will assume that the connection~$\nabla$ on the associated meromorphic bundle $\cH(*0)=\CC(\{\hb\})\otimes_{\CC\{\hb\}}\cH$ is of \emph{exponential type}. Let $\cH^\nabla$ denote the local system of horizontal sections of~$\nabla$ on a small punctured disc $\Delta^*$ centered at $\hb=0$. Assume moreover that we are given a nondegenerate $\iota$-skew-Hermitian pairing $\varh_\rB:\cH^\nabla\otimes_\CC\iota^{-1}\ov{\cH^\nabla}\to\CC_{\Delta^*}$, where $\iota$ is the involution $\hb\mto-\hb$\addedo{ (the index $\rB$ is for ``Betti'', as such a pairing is often defined in a topological way)}. \addedm{We associate to $\varh_\rB$ the $\iota$-Hermitian pairing $-2\pi i\varh_\rB$.}

Using the flat connection~$\nabla$, it is possible to extend in a unique way the previous objects as analogous objects on the complex line $\Afuan_\hb$. On the circle $|\hb|=1$, the involution $\iota$ coincides with the anti-linear involution $\sigma:\hb\mto-1/\ov\hb$, and $\addedm{-2\pi i}\varh_{\rB|S^1}$ can be used to glue $\cH^\vee$ (dual of $\cH$) with $\sigma^*\ov\cH$, to get a holomorphic bundle on~$\PP^1$, that is (as this is compatible with the connection) an \emph{integrable twistor structure}. We say that this twistor structure is obtained by twistor gluing of $(\cH,\nabla,\varh\addedo{_\rB})$ (\cf \addedn{\cite[Lemma 2.14]{Hertling01}}, \cite[Def\ptbl1.29]{Bibi05}).

An example where the resulting twistor structure is pure of weight $0$ (or of some weight) and polarized is obtained as follows (\cf\cite{Bibi05}). Let~$M$ be a regular holonomic $\Clt$-module, which is a minimal extension at its singular set~$C$, and which is endowed with a Hermitian pairing~$h$ with values in $\cS'(\Afu_t)$. Then its Laplace transform~$\Fou M$ is a holonomic $\Cltau$-module, with a regular singularity at $\tau=0$ and an irregular one of exponential type at $\tau=\infty$. The Fourier transformed pairing~$\Fou h$ induces a $\iota$-skew-Hermitian pairing \replacedm{$\varh_\rB=\itwopi\Fou h$}{$(\Fou h)_\rB$} on the corresponding local system $\Fou\cV$. On the other hand, we denote by $\addedm{\DM}^{>0}G$ the Deligne\added{-Malgrange} lattice of~$G=\CC[\tau,\tau^{-1}]\otimes\Fou M$ at $\tau=\infty$ (which is also the Brieskorn lattice $G_0$ of the Deligne lattice $V^{>-1}M$\addedm{, according to Lemma \ref{lem:BDM}}). Setting $\hb=\tau^{-1}$, we can thus apply the twistor gluing procedure to these data. As a direct consequence of \cite[Cor\ptbl3.15]{Bibi05} (using that a flat Hermitian bundle is a variation of complex Hodge structures of type $(0,0)$), we get:

\begin{proposition}\label{prop:polar}
If the pairing~$h$ is Hermitian positive definite on $(V,\nabla)$, then the integrable twistor structure attached to \replacedm{$(\DM^{>0}G,\nabla,\Fou h=-2\pi i\varh_\rB)$}{the previous data} is pure of weight~$0$ and polarized.\qed
\end{proposition}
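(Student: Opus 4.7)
The plan is to reduce the proposition to \cite[Cor\ptbl3.15]{Bibi05}, which handles the Fourier-Laplace transform of polarized variations of complex Hodge structures. The key observation is that, under the hypothesis that~$h$ is Hermitian and positive definite on $(V,\nabla)$, the triple $(V,\nabla,h)$ is tautologically a polarized variation of complex Hodge structures of weight~$0$ concentrated in bidegree $(0,0)$: the Hodge filtration is trivial, the conjugate filtration is trivial, and~$h$ itself serves as polarization. This is noted parenthetically in the statement.

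Next, I would translate the data to the $\cD$-module side. By Riemann-Hilbert (\S\ref{subsec:sesqM}), the flat unitary bundle $(V,\nabla,h)$ corresponds to a regular holonomic $\Clt$-module~$M$, which is a minimal extension at its singular set~$C$, equipped with a nondegenerate Hermitian sesquilinear pairing $h:M\otimes_\CC\ov M\to\cS'(\Afu_t)$. Then \cite[Cor\ptbl3.15]{Bibi05} applied to this variation asserts that the integrable twistor structure produced by twistor gluing from the Brieskorn lattice $G_0$ of the Deligne lattice $V^{>-1}M$, equipped with the Fourier-transformed pairing, is pure of weight~$0$ and polarized.

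Finally, I would identify this construction with the one in the statement. By Lemma~\ref{lem:BDM}, the Brieskorn lattice $G_0$ coincides with the Deligne-Malgrange lattice $\DM^{>0}G$ of $G$ at infinity, so both procedures start from the same $\CC\{\hb\}$-module with connection. By the definition of the Fourier transform of a sesquilinear pairing recalled in \S\ref{subsec:LaplaceD}, the $\iota$-Hermitian pairing $\Fou h=-2\pi i\varh_\rB$ obtained by Fourier transform is exactly the pairing used in the twistor gluing construction of \S\ref{subsec:tw}. Hence the two integrable twistor structures coincide, and the one in the statement is pure of weight~$0$ and polarized.

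The only real content is bookkeeping: one must verify that the normalization conventions (the kernel $e^{-t\tau}$ versus $e^{t\tau}$ used for the Fourier transform, the choice of the interval $(-1,0]$ for the residues of~$V^{>-1}M$ matched with $(0,1]$ at infinity, the sign involved in the involution~$\iota$, and the factor~$\itwopi$ in the definition of~$\varh$) are compatible with those fixed in \cite{Bibi05}. Once this matching is done, the proposition follows as a direct citation, with no further argument needed.
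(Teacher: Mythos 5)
Your proposal is correct and coincides with the paper's own argument: the proposition is stated there as a direct consequence of \cite[Cor\ptbl3.15]{Bibi05}, using precisely the observation that a flat Hermitian bundle is a polarized variation of complex Hodge structures of type $(0,0)$, with Lemma \ref{lem:BDM} supplying the identification $G_0=\DM^{>0}G$. The bookkeeping of normalizations you mention is indeed all that remains, and the paper treats it the same way.
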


As a consequence of the previous results we obtain:

\begin{corollaire}\label{cor:N}
Let~$N$ be a holonomic $\Cltau$-module of exponential type at infinity \deletedo{which is a minimal extension at its} \addedo{having a} single singularity \addedo{at} $0$ in $\Afu_\tau$\addedo{, which is regular}. Set $G=\CC[\tau,\tau^{-1}]\otimes_{\CC[\tau]}N$. Assume that
\begin{enumerate}
\item\label{cor:N1}
\replacedo{$G$ has no rank-one $\CC[\tau,\tau^{-1}]$-submodule stable by~$\nabla$ on which the monodromy is the identity}{\ $N$ fulfills the condition of Lemma \ref{lem:critminext}},
\addedn{\item\label{cor:N1b}
$N$ is a minimal extension at $\tau=0$,}
\item\label{cor:N2}
$N$ is equipped with a \addedo{nondegenerate} $\iota$-skew-Hermitian pairing $\varh$ such that $\wh\varh\defin-2\pi i\ov F_\tau\varh$ is positive definite at one fibre $c\not\in C$ (set of exponential factors of~$N$ at infinity).
\end{enumerate}
Then the triple $(\addedm{\DM}^{>0}G,\nabla,-2\pi i\varh)$ defines, by twistor gluing, an integrable twistor structure which is pure of weight $0$ and polarized.
\end{corollaire}

\begin{proof}
The \deletedo{preliminary} assumption \addedo{that $N$ has $0$ as its single singularity, which is regular, at finite distance and has an irregular singularity of exponential type at infinity} means that $N=\Fou M$ for some regular holonomic~$M$ (\cf \cite[Chap\ptbl V]{Malgrange91} or \cite[Lemma 1.5]{Bibi08}). \addedo{Since~$\varh$ is nondegenerate on $N$ by \ref{cor:N}\eqref{cor:N2}, so is~$\wh\varh$ on~$M$ and, according to Lemma \ref{lem:critminext}, \ref{cor:N}\eqref{cor:N1}\addedn{ and \ref{cor:N}\eqref{cor:N1b}},~$M$} is a minimal extension at its singularity set~$C$. \addedo{Moreover,~$\wh\varh$ restricts as a nondegenerate Hermitian form on $(V,\nabla)$. Being positive definite at some $c\notin C$ by \ref{cor:N}\eqref{cor:N2}, it is positive definite all over $\Afu_t\moins C$, and thus} the assumption of Proposition \ref{prop:polar} is satisfied by~$M$\deletedn{, because~$M$ is a regular holonomic $\Clt$-module which is a minimal extension at its singularity set, it is equipped with a nondegenerate Hermitian pairing~$\wh\varh$ with values in $\cS'(\Afu_t)$, and this pairing is positive definite on $(V,\nabla)$}. \addedm{Lastly, we have $\Fou\,\wh\varh=-2\pi i\varh$.}
\end{proof}

\section{Stokes filtration and Stokes data}\label{sec:Stokesfil}
In this section we recall the notion of Stokes filtration as defined in \cite{Deligne78} (\cf also \cite{Malgrange83bb}, \cite{B-V89}, \cite{Malgrange91}) in the particular case of Stokes filtrations which are of exponential type. We make explicit the correspondence to the more classical approach via Stokes data, and we mainly focus on the behaviour with respect to a sesquilinear pairing (hence also to duality).

\subsection{Stokes filtration}\label{subsec:Stokesfil}
Let $\kk$ be a field (\eg $\QQ$ or $\CC$). Let~$\cL$ be a local system of finite dimensional $\kk$-vector spaces on the circle $S^1$ with coordinate $e^{i\theta}$. A Stokes filtration of~$\cL$ is a family of subsheaves $\cL_{\leq c}\subset \cL$, with $c\in\CC$, satisfying the following properties:
\begin{enumerate}
\item\label{enum:Stokesfil1}
For each $\theta\in \replacedo{\RR/2\pi\ZZ}{}$, let $\leqtheta$ be the partial order on $\CC$ which is compatible with addition and satisfies
\[
c\leqtheta0\iff c=0\text{ or } \arg c-\theta\in(\pi/2,3\pi/2)\mod2\pi.
\]
We also set $c\letheta0$ iff $c\neq0$ and $c\leqtheta0$. One requires that, for each $\theta$, the germs $\cL_{\leq c,\theta}$ form an exhaustive increasing filtration of $\cL_\theta$ with respect to $\leqtheta$.
\item\label{enum:Stokesfil2}
Because the order $\leqtheta$ is open with respect to $\theta$, the germs $\cL_{\letheta c}\defin\sum_{c'\letheta c}\cL_{\leq c',\theta}$ glue as a subsheaf $\cL_{<c}$ of~$\cL$. One requires that the graded sheaves $\gr_c\cL\defin\cL_{\leq c}/\cL_{<c}$ are locally constant sheaves on $S^1$.
\item\label{enum:Stokesfil3}
Near any $e^{i\theta}\in S^1$, one requires that there are local isomorphisms $(\cL,\cL_\bbullet)\simeq(\gr\cL,(\gr\cL)_\bbullet)$, where the Stokes filtration on $\gr\cL\defin\bigoplus_{c\in\CC}\gr_c\cL$ is the natural one, that is, $(\gr\cL)_{\leq c,\theta}=\bigoplus_{c'\leqtheta c}\gr_{c'}\cL$. In particular, $\gr_c\cL=0$ except for~$c$ in a finite set $C\subset\CC$, called the set of exponential factors of the Stokes filtration $(\cL,\cL_\bbullet)$.
\end{enumerate}

\begin{remarques}\label{rem:Stokesdir}\mbox{}
\begin{enumerate}
\item\label{rem:Stokesdir1}
We simplify here the general definition of a Stokes filtration, as we only deal with this kind of filtrations. It is called ``of exponential type'' in \cite{K-K-P08}. \addedo{The case where $C=\{0\}$ corresponds to a regular singularity in the setting of bundles with meromorphic connections. One can notice that, as a consequence of the definition, the set~$C$ is not empty except possibly if $\cL=0$; in such a case, it will be convenient to assume also $C\neq\emptyset$, \eg $C=\{0\}$.}

\item\label{rem:Stokesdir2}
For each pair $c\neq c'\in\CC$, there are exactly two values of $\theta\bmod2\pi$, say $\theta_{c,c'}$ and $\theta'_{c,c'}$, such that~$c$ and $c'$ are not comparable at $\theta$. We have $\theta'_{c,c'}=\theta_{c,c'}+\pi$. These values are called the Stokes directions of the pair $(c,c')$. For any $\theta$ in one component of $\replacedo{\RR/2\pi\ZZ}{}\moins\{\theta_{c,c'},\theta'_{c,c'}\}$, we have $c\letheta c'$, and the reverse inequality for any $\theta$ in the other component. We denote \replacedo{the images of these intervals in $S^1$ via $\theta\mto e^{i\theta}$}{\ these intervals} by $S^1_{c\leq c'}$ and $S^1_{c'\leq c}$ respectively. If $c=c'$, we set $S^1_{c\leq c}\defin S^1$.
\item\label{rem:Stokesdir3}
For each pair $c,c_o\in\CC$, the inclusion $j_{c\leq c_o}:S^1_{c\leq c_o}\hto S^1$ is open. We will denote by $\beta_{c\leq c_o}$ the functor $j_{c\leq c_o,!}j_{c\leq c_o}^{-1}$, consisting in restricting a sheaf to this open set and extending the restriction to $S^1$ by $0$. The filtration condition \eqref{enum:Stokesfil1} above implies that, for each pair $c,c_o$, there is a natural inclusion $\beta_{c\leq c_o}\cL_{\leq c}\hto\cL_{\leq c_o}$.
\end{enumerate}
\end{remarques}

A morphism $\lambda:(\cL,\cL_\bbullet)\to(\cL',\cL'_\bbullet)$ of Stokes-filtered local systems is a morphism of local systems satisfying $\lambda(\cL_{\leq c})\subset\cL'_{\leq c}$ for each $c\in\CC$.

\begin{proposition}\label{prop:strict}\mbox{}
\begin{enumerate}
\item\label{prop:strict1}
On any open interval $I\subset \addedo{\RR/2\pi\ZZ}$ of length $\pi+\addedo{2}\epsilon$ with $\epsilon>0$ small, there exists a \addedo{unique} splitting $\cL_{|I}\simeq\bigoplus_c\gr_c\cL_{|I}$ compatible with the Stokes filtrations.
\item\label{prop:strict2}
Let $\lambda:(\cL,\cL_\bbullet)\to(\cL',\cL'_\bbullet)$ be a morphism of Stokes-filtered local systems. Then, for any open interval $I\subset \addedo{\RR/2\pi\ZZ}$ of length $\pi+\addedo{2}\epsilon$, the morphism $\lambda\addedo{_{|I}}$ is graded with respect to the splittings in \eqref{prop:strict1}.
\end{enumerate}
\end{proposition}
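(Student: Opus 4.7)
The plan is to deduce the uniqueness assertion of part~\eqref{prop:strict1} and the whole of part~\eqref{prop:strict2} from a single sheaf-theoretic observation exploiting the length condition, and to treat existence in part~\eqref{prop:strict1} separately by a cohomological argument on~$I$.

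For \emph{uniqueness in part~\eqref{prop:strict1}}, I would consider two compatible splittings $\sigma_1,\sigma_2$ on~$I$ and study the automorphism $u=\sigma_2\circ\sigma_1^{-1}$ of $(\cL_{|I},\cL_{\bbullet|I})$, which induces the identity on $\gr\cL_{|I}$. Read in a local splitting provided by Axiom~\eqref{enum:Stokesfil3}, $u$ has the form $\mathrm{id}+N$ with components $N^{c,c'}:\gr_{c'}\cL\to\gr_c\cL$, and preservation of the Stokes filtration forces $N^{c,c'}_\theta=0$ unless $c\letheta c'$. For each pair with $c\neq c'$, the section $N^{c,c'}$ is therefore a global section over~$I$ of $\beta_{c\leq c'}\Hom(\gr_{c'}\cL,\gr_c\cL)$---equivalently a locally constant section of the local system $\Hom(\gr_{c'}\cL,\gr_c\cL)$ on $I\cap S^1_{c\leq c'}$ whose support is closed in~$I$. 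Since the length of~$I$ exceeds~$\pi$ while $S^1_{c\leq c'}$ has length~$\pi$, the set $I\cap S^1_{c\leq c'}$ is a nonempty proper open subset of the connected interval~$I$; each of its connected components thus has at least one endpoint lying in~$I$ (a Stokes direction of the pair $(c,c')$), and a locally constant section vanishing in a neighborhood of such a point must vanish on the whole component. Hence $N^{c,c'}=0$ for every $c\neq c'$, so $N=0$ and $\sigma_1=\sigma_2$.

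\emph{Part~\eqref{prop:strict2}} follows by the same mechanism: given the splittings $\sigma,\sigma'$ on~$I$ furnished by part~\eqref{prop:strict1}, one writes $\sigma'\circ\lambda\circ\sigma^{-1}$ as a matrix with entries $\lambda^{c,c'}:\gr_{c'}\cL\to\gr_c\cL'$; compatibility of~$\lambda$ with the two Stokes filtrations forces $\lambda^{c,c'}_\theta=0$ unless $c\leqtheta c'$, so for $c\neq c'$ every off-diagonal entry is a section over~$I$ of $\beta_{c\leq c'}\Hom(\gr_{c'}\cL,\gr_c\cL')$ and vanishes by the argument just given. Therefore $\lambda_{|I}=\bigoplus_c\gr_c\lambda$ in the chosen splittings. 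For \emph{existence in part~\eqref{prop:strict1}}, Axiom~\eqref{enum:Stokesfil3} supplies local splittings, so the sheaf of compatible splittings of $(\cL,\cL_\bbullet)_{|I}$ is a torsor under the sheaf of groups~$G$ of filtered automorphisms of $(\cL,\cL_\bbullet)$ inducing the identity on~$\gr\cL$. The sheaf~$G$ is unipotent, with a descending central filtration whose abelian graded pieces are direct sums of sheaves $\beta_{c\leq c'}\Hom(\gr_{c'}\cL,\gr_c\cL)$ (over pairs of a given filtration depth); the obstructions to trivializing the torsor on~$I$ thus reduce, step by step, to classes in the groups $H^1(I,\beta_{c\leq c'}\Hom(\gr_{c'}\cL,\gr_c\cL))$. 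These groups can be computed on the contractible interval~$I$ by means of the exact sequence $0\to j_!j^{-1}\wt\cF\to\wt\cF\to i_*i^{-1}\wt\cF\to 0$ associated to the open embedding $j:I\cap S^1_{c\leq c'}\hto I$ (with $\wt\cF$ any locally constant extension of $\Hom(\gr_{c'}\cL,\gr_c\cL)$ to~$I$), and they vanish as soon as~$I$ does not contain both antipodal Stokes directions of the pair $(c,c')$ in its interior; choosing~$\epsilon$ small relative to the finite configuration of Stokes directions coming from~$C$ guarantees this for every pair.

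The \emph{main obstacle} is precisely the existence part. Uniqueness and part~\eqref{prop:strict2} are essentially immediate consequences of the condition that the length of~$I$ exceed~$\pi$, whereas the $H^1$-vanishing above must be controlled for every pair of antipodal Stokes directions arising from~$C$ in the narrow regime of lengths in $(\pi,\pi+2\epsilon]$; this is exactly what the ``$\epsilon$ small'' hypothesis is designed to ensure.
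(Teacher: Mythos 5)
Your arguments for the uniqueness in (1) and for part (2) are correct and rest on the same mechanism as the paper's proof of (2): an off-diagonal entry $N^{c,c'}$ or $\lambda^{c,c'}$ ($c\neq c'$) is a section over $I$ of $\beta_{c\leq c'}\cHom(\gr_{c'}\cL,\gr_c\cL)$ (resp.\ with target $\gr_c\cL'$), and since an open interval of length $>\pi$ must contain a Stokes direction of each pair, such a section vanishes on a nonempty open subset of its interval of definition and hence identically. For existence the paper only cites Malgrange, so this is where you genuinely diverge, and it is where the gap lies.

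There are two problems with the existence argument. First, the d\'evissage: the sheaf of groups $G$ is indeed unipotent, but its central series does \emph{not} have graded pieces that are direct sums of the sheaves $\beta_{c\leq c'}\cHom(\gr_{c'}\cL,\gr_c\cL)$ ``over pairs of a given filtration depth'': the order $\leqtheta$ rotates with $\theta$, so no global depth function $d$ on pairs satisfying $d(c,c')\geq d(c,c'')+d(c'',c')$ whenever $c\letheta c''\letheta c'$ for some $\theta$ can exist once $\#C\geq3$ (for a generic triple such a chain occurs at some $\theta$, forcing $d\geq2$ everywhere, then $d\geq 4$, etc.). The actual graded pieces are extensions by zero of local systems from \emph{locally closed} subsets of $I$ (differences of the open sets supporting chains of a given length), and their $H^1(I,\cdot)$ is not covered by your computation for $j_!$ from open arcs; it needs a separate analysis. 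The standard way to close this is to prove directly that $H^1(I,\cL_{<c})=0$ for each $c\in C$, lift a basis of $\Gamma(I,\gr_c\cL)$ to $\Gamma(I,\cL_{\leq c})$, and check that the resulting morphism $\bigoplus_c\gr_c\cL_{|I}\to\cL_{|I}$ is a filtered isomorphism. Second, your closing claim that small $\epsilon$ forces $I$ to contain at most one Stokes direction of each antipodal pair is false: an interval of length $\pi+2\epsilon$ whose initial endpoint lies within $2\epsilon$ of a Stokes direction of $(c,c')$ contains \emph{both} directions of that pair, and for such an $I$ the splitting genuinely fails to exist in general (already for $\#C=2$ it would force the monodromy of $\cL$ to preserve the line $\cL_{\leq c}$). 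The proposition must be read, as the paper does explicitly in its proof of (2), under the hypothesis that $I$ contains exactly one Stokes direction of each pair; with that hypothesis your vanishing criterion for the individual groups $H^1(I,\beta_{c\leq c'}\cHom(\gr_{c'}\cL,\gr_c\cL))$ is correct.
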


\begin{proof}\mbox{}

\eqref{prop:strict1}
\addedo{This is a particular case of \cite[\S5]{Malgrange83bb}.}

\medskip
\eqref{prop:strict2}
By the first part of the \replacedo{proposition}{proof}, choosing a splitting of the Stokes filtration\addedo{s} of~$\cL$ \addedo{and $\cL'$} on~$I$ allows us to decompose $\lambda\addedo{_{|I}}$ into blocks $\lambda_{ij}:\gr_{c_i}\cL\to\gr_{c_j}\cL'$. Each $\lambda_{ij}$ is a morphism of local systems. In particular, it vanishes identically if and only if it vanishes at one point. By assumption, the interval~$I$ contains one (and exactly one) Stokes direction for each pair $(c_i,c_j)$ with $i\neq j$, which is a~$\theta_o$ such that~$c_i$ and~$c_j$ are not comparable with respect to $\leqthetao$. Then, for $\theta$ on one side of~$\theta_o$, one has $c_i\letheta c_j$ and, for $\theta$ on the other side, one has the reverse inequality. Since $\lambda$ is compatible with the Stokes filtration, this implies that $\lambda_{ij}$ ($i\neq j$) vanishes on some nonempty subset of~$I$, and therefore all over~$I$.
\end{proof}
\addedo{\begin{remarque}\label{rem:splitting}
One can regard this splitting result in various ways:
\begin{enumerate}
\item\label{rem:splitting1}
For $\theta,\theta'=\theta+\pi\in I$, the filtrations $\cL_{\leq\cbbullet,\theta}$ and $\cL_{\leq\cbbullet,\theta'}$ are opposite, if one identifies the opposite fibres $\cL_{\theta}$ and $\cL_{\theta'}$ by the flat structure along the interval $I$. The given splitting is the unique common splitting of these opposite filtrations.
\item\label{rem:splitting2}
\deletedn{Using the functor $\beta$ introduced above, the compatibility of the splitting with the Stokes filtration means that, for any $c_o\in\CC$, we have an induced splitting
$\cL_{\leq c_o|I}=\tbigoplus_{c\in C}\beta_{c\leq c_o}\gr_c\cL_{|I}$.
In particular, for $c_o\in C$, the piece $\gr_{c_o}\cL_{|I}$ of the splitting of $\cL$ is the constant sheaf with fibre $\Gamma(I,\cL_{\leq c_o})$. These spaces of global sections fit together to a direct sum which generates all germs of sections of $\cL$ on~$I$.}
\addedn{The pieces of the unique splitting of $\cL_{|I}$ are the constant sheaves $\Gamma(I,\cL_{\leq c_i})$. Proposition \ref{prop:strict}\eqref{prop:strict1} says that these spaces of sections on~$I$ fit together to a direct sum which generates all sections of~$\cL$ on~$I$.}
\end{enumerate}
\end{remarque}}

\begin{proposition}\label{prop:catStokesfiltabelian}
The category of Stokes-filtered local systems $(\cL,\cL_\bbullet)$ is abelian.
\end{proposition}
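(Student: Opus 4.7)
My plan is to construct kernels, cokernels and images of a morphism $\lambda:(\cL,\cL_\bbullet)\to(\cL',\cL'_\bbullet)$ by first taking the underlying kernel, cokernel and image as local systems on $S^1$, and then equipping each with the naturally induced filtration (intersection with $\cL_{\leq c}$ for subobjects, image of $\cL'_{\leq c}$ for quotients). The abelian-category axioms will then reduce to two points: the local splitting property \eqref{enum:Stokesfil3} for the induced filtrations, and strictness (i.e., that the canonical coimage-to-image morphism is an isomorphism of Stokes-filtered local systems). Both will follow from the block decomposition of $\lambda$ on intervals of length $\pi+2\epsilon$ provided by Proposition~\ref{prop:strict}\eqref{prop:strict2}. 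Additivity of the category is immediate: there is a zero object, and componentwise direct sums $(\cL\oplus\cL',\cL_\bbullet\oplus\cL'_\bbullet)$ are Stokes-filtered with set of exponential factors $C\cup C'$.

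For the key step, set $\cK:=\ker\lambda$ with $\cK_{\leq c}:=\cK\cap\cL_{\leq c}$ and $\cC:=\mathrm{coker}\,\lambda$ with $\cC_{\leq c}:=\mathrm{image}(\cL'_{\leq c}\to\cC)$, and cover $S^1$ by open intervals $I_\alpha$ of length $\pi+2\epsilon$. By Proposition~\ref{prop:strict}\eqref{prop:strict1}, $\cL_{|I_\alpha}$ and $\cL'_{|I_\alpha}$ split canonically as $\bigoplus_c\gr_c\cL_{|I_\alpha}$ and $\bigoplus_c\gr_c\cL'_{|I_\alpha}$ compatibly with the Stokes filtrations, and by Proposition~\ref{prop:strict}\eqref{prop:strict2}, $\lambda_{|I_\alpha}$ becomes block-diagonal $\bigoplus_c\lambda_c$ with $\lambda_c:\gr_c\cL_{|I_\alpha}\to\gr_c\cL'_{|I_\alpha}$. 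Hence $\cK_{|I_\alpha}=\bigoplus_c\ker(\lambda_c)$ and $\cC_{|I_\alpha}=\bigoplus_c\mathrm{coker}(\lambda_c)$, and intersecting or projecting through the splittings identifies the induced filtrations there with the natural Stokes filtrations on these direct sums, yielding the local splitting axiom. Passing to associated graded, $\gr_c\cK=\ker(\gr_c\lambda)$ and $\gr_c\cC=\mathrm{coker}(\gr_c\lambda)$; since $\gr_c\lambda:\gr_c\cL\to\gr_c\cL'$ is a morphism of local systems on all of $S^1$, these are locally constant sheaves on $S^1$ and their supports lie in $C\cup C'$.

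Finally, the coimage $\cL/\cK$ with the quotient filtration and the image $\mathrm{Im}\,\lambda\subset\cL'$ with the induced filtration $\mathrm{Im}\,\lambda\cap\cL'_{\leq c}$ both decompose on each $I_\alpha$ as $\bigoplus_c\mathrm{image}(\lambda_c)$ with its natural Stokes filtration, so the canonical morphism $\mathrm{Coim}\,\lambda\to\mathrm{Im}\,\lambda$ is locally a direct sum of local-system isomorphisms preserving the filtrations, hence an isomorphism of Stokes-filtered local systems. The main technical step I expect to lean on is precisely this uniform appeal to Proposition~\ref{prop:strict}\eqref{prop:strict2}: without the block decomposition on intervals of length $\pi+2\epsilon$, neither the local splitting axiom for $\cK$ and $\cC$ nor the coincidence of coimage and image would hold. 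Once that is granted, the remaining verifications are routine.
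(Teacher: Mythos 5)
Your proposal is correct and follows essentially the same route as the paper: both reduce everything to the fact (Proposition \ref{prop:strict}\eqref{prop:strict2}) that $\lambda$ becomes graded with respect to the splittings on intervals of length $\pi+2\epsilon$, from which the local splitting axiom for the induced filtrations on $\ker\lambda$ and $\coker\lambda$ and the coincidence of coimage and image follow. You have merely spelled out the details that the paper's proof summarizes as ``this easily implies''.
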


\begin{proof}
Let $\lambda:(\cL',\cL'_\bbullet)\to(\cL,\cL_\bbullet)$ be a morphism of Stokes structures. Firstly, $\ker\lambda$ and $\coker\lambda$ are local systems on $S^1$. Moreover, on any open interval~$I$ of length $\pi+\addedo{2}\epsilon$, $\lambda$ is graded, according to \ref{prop:strict}\eqref{prop:strict2}. This easily implies that, on each such~$I$, the kernel and the cokernel of $\lambda:\cL'_\bbullet\to\cL_\bbullet$ are Stokes filtrations of $\ker\lambda$ and $\coker\lambda$ respectively, so that $\ker\lambda$ and $\coker\lambda$ exist as Stokes-filtered local systems, and the morphism of the co-image to the image of $\lambda$ is an isomorphism, so the category is abelian.\end{proof}

Let $\Afu_\tau$ be the affine line with coordinate $\tau$. From \cite{Deligne78} (\cf also \cite{Malgrange83bb}, \cite{B-V89}, \cite{Malgrange91}), we get:

\begin{proposition}\label{prop:equivconnStokesloc}
If $\kk=\CC$, there is an equivalence between the category of \replacedo{rational}{meromorphic} connections on $\Afu_\tau$ with a regular singularity at $\tau=0$ and of exponential type at $\tau=\infty$, and the category of Stokes-filtered local systems (of exponential type) on the circle at infinity $S^1_\infty$ of $\Afu_\tau$.\qed
\end{proposition}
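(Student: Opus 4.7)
The plan is to construct quasi-inverse functors between the two categories and then check they are indeed quasi-inverse.

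First I would define the functor from connections to Stokes-filtered local systems. Given a rational connection $(M,\nabla)$ on $\Afu_\tau$ with regular singularity at $\tau=0$ and exponential type at $\tau=\infty$, let $\cL$ be the local system on the circle $S^1_\infty$ at infinity whose stalks are germs of horizontal sections of $\nabla$ on small sectorial neighborhoods of~$\hb=0$ (with $\hb=1/\tau$). The crucial input is the Hukuhara-Turrittin-Sibuya asymptotic lifting theorem (\cf\cite{Malgrange91}): the formal Levelt-Turrittin decomposition $\wh M\simeq\bigoplus_{c\in C}(\cE^{-c/\hb}\otimes\wh R_c)$ lifts to honest analytic decompositions on sufficiently small sectors of $S^1_\infty$, and two such sectorial lifts differ, on overlaps, by an automorphism of the formal direct sum that is block-upper-triangular with identity diagonal blocks with respect to the order $\leqtheta$. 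I would then define $\cL_{\leq c,\theta}$ as the subspace of $\cL_\theta$ consisting of germs whose components in any such sectorial decomposition vanish on summands indexed by $c'$ with $c'\not\leqtheta c$. Triangularity makes this independent of the sectorial lift chosen, and axioms \eqref{enum:Stokesfil1}--\eqref{enum:Stokesfil3} of \S\ref{subsec:Stokesfil} then follow directly from the Hukuhara-Turrittin-Sibuya statement.

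Next I would produce a quasi-inverse. Starting from $(\cL,\cL_\bbullet)$ with exponential factors $C$, each $\gr_c\cL$ is a local system on $S^1_\infty$, and the classical Riemann-Hilbert correspondence at a regular singularity produces from it a regular singular formal module $\wh R_c$, giving a formal model $\wh M_0=\bigoplus_{c\in C}(\cE^{-c/\hb}\otimes \wh R_c)$. Using the unique sectorial splittings provided by Proposition \ref{prop:strict}\eqref{prop:strict1} on a covering of $S^1_\infty$ by intervals of length $\pi+2\epsilon$, the Stokes filtration assembles into a 1-cocycle with values in the sheaf $\mathrm{Aut}^{<0}\wh M_0$ of sectorial automorphisms asymptotic to the identity. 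By the Malgrange-Sibuya realization theorem, this cocycle is realized by an actual germ of meromorphic connection at $\tau=\infty$ formalizing to $\wh M_0$. Gluing with the regular singular model at $\tau=0$ (itself produced from the underlying local system~$\cL$ by standard Riemann-Hilbert) and applying GAGA yields the sought rational connection on $\Afu_\tau$.

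For full faithfulness, Proposition \ref{prop:strict}\eqref{prop:strict2} ensures that any morphism of Stokes-filtered local systems is sectorially graded, hence admits a canonical lift to a morphism between the associated sectorial models; the vanishing of the relevant $H^0$ of the unipotent sheaf then lets these sectorial lifts glue uniquely to a morphism of the reconstructed connections. The main obstacle is the Malgrange-Sibuya realization step, namely the description of $H^1(S^1_\infty,\mathrm{Aut}^{<0}\wh M_0)$ as classifying analytic connections with prescribed formal model, and the identification of the Stokes cocycles built above with a system of representatives for this set. This is the substantial analytic content of the equivalence; the remaining ingredients (Riemann-Hilbert at the regular point $\tau=0$ and GAGA to go from meromorphic germs at infinity to a rational connection on the affine line) are essentially formal.
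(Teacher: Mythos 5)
Your sketch is precisely the classical Hukuhara--Turrittin--Sibuya plus Malgrange--Sibuya argument, which is exactly what the paper relies on: it offers no proof of Proposition~\ref{prop:equivconnStokesloc} and simply cites \cite{Deligne78}, \cite{Malgrange83bb}, \cite{B-V89} and \cite{Malgrange91}, where this argument is carried out. Your description of the two functors (sectorial liftings defining the filtration in one direction, realization of the Stokes cocycle plus gluing with the regular model at $\tau=0$ and GAGA in the other) is correct and matches that standard route, with the Malgrange--Sibuya realization correctly identified as the substantial analytic step.
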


Of course, this result gives back the abelianity result of Proposition \ref{prop:catStokesfiltabelian} (proved directly for any field of coefficients).

\subsection{Stokes data}\label{subsec:Stokesdata}
These are linear data which provide a description of \addedo{a}~Stokes-filtered local system.\addedo{ Let~$C$ be a non-empty finite subset of $\CC$. We say that $\theta_o\in\RR/2\pi\ZZ$ is \emph{generic} with respect to $C$ if \addedn{it} is a Stokes direction (\cf Remark \ref{rem:Stokesdir}\eqref{rem:Stokesdir2}) for no pair $c\neq c'\in C$. Once~$\theta_o$ generic with respect to~$C$ is chosen, there is a unique numbering of the set~$C$ in such a way that $c_1\lethetao c_2\lethetao\cdots\lethetao c_r$. We will set $\theta'_o=\theta_o+\pi$. Note that the order is exactly reversed at~$\theta'_o$, so that $-C$ is numbered as $\{-c_1,\dots,-c_n\}$ by $\theta'_o$.}

\begin{definition}\label{def:catStokesdata}
Let~$C$ be a non-empty finite subset of $\CC$\addedo{ and let $\theta_o\in\RR/2\pi\ZZ$ be generic with respect to $C$}. The category of Stokes data with exponential factors in~$C$ \addedo{totally ordered by~$\theta_o$ (we also say \emph{of type $(C,\theta_o)$})} has objects consisting of two families of $\kk$-vector spaces $(G_{c,1},G_{c,2})_{c\in C}$ and a diagram of morphisms
\begin{equation}\tag*{(\protect\ref{def:catStokesdata})$(*)$}\label{eq:catStokesdata}
\begin{array}{c}
\xymatrix@C=1.5cm{
\tbigoplus_{c\in C}G_{c,1}\ar@/^1pc/[r]^-{S}\ar@/_1pc/[r]_-{S'}&\tbigoplus_{c\in C}G_{c,2}
}
\end{array}
\end{equation}
such that, for \replacedo{the}{some} numbering $C=\{c_1,\dots,c_n\}$\addedo{ defined by~$\theta_o$},
\begin{enumerate}
\item
$S=(S_{ij})_{i,j=1,\dots, n}$ is block-upper triangular, \ie $S_{ij}:G_{c_i,1}\to G_{c_j,2}$ is zero unless $i\leq j$, and $S_{ii}$ is invertible (so $\dim G_{c_i,1}=\dim G_{c_i,2}$, and $S$ itself is invertible),
\item
$S'=(S'_{ij})_{i,j=1,\dots, n}$ is block-lower triangular, \ie $S'_{ij}:G_{c_i,1}\to G_{c_j,2}$ is zero unless $i\geq j$, and $S'_{ii}$ is invertible (so $S'$ itself is invertible).
\end{enumerate}

A morphism of Stokes data \addedo{of type $(C,\theta_o)$} consists of morphisms of $\kk$-vector spaces $\lambda_{c,\ell}:G_{c,\ell}\to G'_{c,\ell}$, $c\in C$, $\ell=1,2$ which are compatible with the \addedo{corresponding} diagrams \ref{eq:catStokesdata}.
\end{definition}

Fixing bases in the spaces $G_{c,\ell}$, $c\in C$, $\ell=1,2$, allows one to present Stokes data by matrices $(\Sigma,\Sigma')$ where $\Sigma=(\Sigma_{ij})_{i,j=1,\dots, n}$ (\resp $\Sigma'=(\Sigma'_{ij})_{i,j=1,\dots, n}$) is block-upper (\resp -lower) triangular and each $\Sigma_{ii}$ (\resp $\Sigma'_{ii}$) is invertible.

The category of Stokes data \replacedo{of type $(C,\theta_o)$}{with exponential factors contained in~$C$} is clearly abelian. We will now define a functor \addedo{(depending on~$\theta_o$)} from the category of Stokes-filtered local systems with exponential factors contained in~$C$ to the category of Stokes data \replacedo{of type $(C,\theta_o)$}{with exponential factors contained in~$C$}, and we will show that it is an equivalence. In the next section, we will show that it is compatible with natural operations on these objects (involution $\iota$, duality, sesquilinear duality).

\deletedo{The functor depends on the choice of $\theta_o\in\RR/2\pi\ZZ$, which is not a Stokes direction (\cf Remark \ref{rem:Stokesdir}\eqref{rem:Stokesdir2}) for any pair $c\neq c'\in C$. Once~$\theta_o$ is chosen, we choose a numbering of the $c\in C$ in such a way that $c_1\lethetao c_2\lethetao\cdots\lethetao c_r$. We will set $\theta'_o=\theta_o+\pi$. Note that the order is exactly reversed at~$\theta'_o$.}

Let us also fix two opposite intervals~$I_1$ and $I_2$ of length $\pi+\addedo{2}\epsilon$ on $\addedo{\RR/2\pi\ZZ}$ so that their intersection $I_1\cap I_2$ consists of two intervals $(\theta_o-\epsilon,\theta_o+\epsilon)$ and $(\theta'_o-\epsilon,\theta'_o+\epsilon)$, and contain\addedo{s} no Stokes direction of pairs $c\neq c'\in C$.

To a local system~$\cL$ on $S^1$ we attach the following ``monodromy data'' (they are quite redundant):
\begin{enumerate}
\item
vector spaces $L_1=\Gamma(I_1,\cL)$ and $L_2=\Gamma(I_2,\cL)$,
\item
vector spaces $L_{\theta_o}=\cL_{\theta_o}$ and $L_{\theta'_o}=\cL_{\theta'_o}$,
\item
a diagram of isomorphisms, given by the natural restriction morphisms,
\[
\xymatrix{
&\ar[ld]_-{a'_1}L_1\ar[rd]^-{a_1}&\\
L_{\theta'_o}&&L_{\theta_o}\\
&\ar[lu]^-{a'_2}L_2\ar[ru]_{a_2}&
}
\]
\end{enumerate}

This reduces to two possible descriptions:
\begin{enumeratea}
\item\label{enum:Stokesdataa}
$(L_1,L_2,S_{\theta_o},S_{\theta'_o})$, with isomorphisms $S_{\theta_o},S_{\theta'_o}:L_1\isom L_2$ and monodromy $T_1:L_1\isom L_1$, where
\[
S_{\theta_o}=a_2^{-1}a_1,\quad S_{\theta'_o}=a_2^{\prime-1}a'_1,\quad T_1=S_{\theta_o}^{-1}S_{\theta'_o}.
\]
\item\label{enum:Stokesdatab}
$(L_{\theta_o}, L_{\theta'_o},S_1,S_2)$ with isomorphisms $S_1,S_2:L_{\theta_o}\isom L_{\theta'_o}$ and monodromy $T_{\theta_o}:L_{\theta_o}\isom L_{\theta_o}$, where
\[
S_1=a'_1a_1^{-1},\quad S_2=a'_2a_2^{-1},\quad T_{\theta_o}=S_2^{-1}S_1.
\]
\end{enumeratea}

Assume now that $(\cL,\cL_\bbullet)$ is a Stokes-filtered local system with associated graded local system $\cG=\gr\cL=\bigoplus_{i=1}^r\cG_{c_i}$. The filtration $\cL_{\leq c,\theta_o}$ induces a filtration on $L_{\theta_o}$ and, through $a_1$, a filtration $L_{1,\leqthetao \cbbullet}$ of $L_1$. We have a similar filtration attached to~$\theta'_o$.

We have splittings (\cf Proposition \ref{prop:strict}\eqref{prop:strict1}\deletedo{ and Remark \ref{rem:uniquesplitting}}):
\[
(\cL,\cL_\bbullet)_{|I_1}\simeq(\cG,\cG_\bbullet)_{|I_1},\quad (\cL,\cL_\bbullet)_{|I_2}\simeq(\cG,\cG_\bbullet)_{|I_2},
\]
giving isomorphisms
\begin{equation}\label{eq:splittings}
L_1\simeq\tbigoplus_{i=1}^rG_{c_i,1},\quad L_2\simeq\tbigoplus_{i=1}^rG_{c_i,2}
\end{equation}
compatible with Stokes filtrations \addedo{(in other words, both filtrations $L_{1,\leqthetao \cbbullet}$ and $L_{1,\leqthetapo \cbbullet}$ are opposite in $L_1$, \cf Remark \ref{rem:splitting}\eqref{rem:splitting1}, giving rise to a unique common splitting, and similarly for $L_2$)}, and such that $S_{\theta_o}$ (\resp $S_{\theta'_o}$) is compatible with the \replacedo{filtration}{order} at~$\theta_o$ (\resp~$\theta'_o$) and the graded morphisms are isomorphisms. Taking into account the assumption on the ordering of the~$c_j$, this is equivalent to saying that $S_{\theta_o}$ is block-upper triangular, $S_{\theta'_o}$ is block-lower triangular, and each diagonal block $\gr_{c_i}S_{\theta_o}$, $\gr_{c_i}S_{\theta'_o}$ is an isomorphism. In such a way, we have defined the desired functor (to check the compatibility with morphisms, use Proposition \ref{prop:strict}\eqref{prop:strict2}). The Stokes data attached to $(\cL,\cL_\bbullet)$ are given by the diagram:
\begin{equation}\label{eq:Stokesdata}
\begin{array}{c}
\xymatrix@C=1.5cm{
\tbigoplus_{i=1}^rG_{c_i,1}\ar@/^1pc/[r]^-{S_{\theta_o}}\ar@/_1pc/[r]_-{S_{\theta'_o}}&\tbigoplus_{i=1}^rG_{c_i,2}
}
\end{array}
\end{equation}
Note also that the monodromy $T_{c_i}$ on $\gr_{c_i}\cL$ is given by $T_{c_i,1}=(\gr_{c_i}S_{\theta_o})^{-1}\gr_{c_i}S_{\theta'_o}$ (this is of course not obtained from the bloc\addedo{k}s of $T_1=S_{\theta_o}^{-1}S_{\theta'_o}$ in general).

As a consequence of the previous discussion we can state the following classical result (the bijection at the level of $\Hom$ follows from Proposition \ref{prop:strict}\eqref{prop:strict2}):

\begin{proposition}\label{prop:datafiltered}
The previous functor is an equivalence between the category of Stokes-filtered local systems with exponential factors contained in~$C$ and the category of Stokes data \replacedo{of type $(C,\theta_o)$}{with exponential factors contained in~$C$}.\qed
\end{proposition}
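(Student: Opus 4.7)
The plan is to prove full faithfulness and essential surjectivity of the functor. For full faithfulness, I would invoke Proposition~\ref{prop:strict}\eqref{prop:strict2}: a morphism $\lambda:(\cL,\cL_\bullet)\to(\cL',\cL'_\bullet)$ restricts on each $I_\ell$ ($\ell=1,2$) to a graded morphism with respect to the canonical splittings~\eqref{eq:splittings}, hence decomposes as a family $\lambda_{c,\ell}:G_{c,\ell}\to G'_{c,\ell}$. Compatibility with $S_{\theta_o}$ and $S_{\theta'_o}$ is tautological, since these isomorphisms are the further restrictions of the splittings to $\theta_o$ and $\theta'_o$, both lying in $I_1\cap I_2$, where $\lambda$ acts simultaneously through both trivializations. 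Conversely, a morphism of Stokes data $\{\lambda_{c,\ell}\}$, read via \eqref{eq:splittings} as a pair of graded morphisms on $I_1$ and $I_2$, glues along the two components of $I_1\cap I_2$ thanks to the compatibility relations with $S_{\theta_o}$ and $S_{\theta'_o}$; the resulting morphism $\lambda:\cL\to\cL'$ preserves the Stokes filtrations locally on each $I_\ell$ by block-triangularity.

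For essential surjectivity, given data $(G_{c,1},G_{c,2},S,S')$ of type $(C,\theta_o)$, I would construct $(\cL,\cL_\bullet)$ by descent along the covering $S^1=I_1\cup I_2$. Place on each $I_\ell$ the constant local system $\cG^{(\ell)}=\bigoplus_{c\in C}\cG_{c,\ell}$ (with $\cG_{c,\ell}$ of fibre $G_{c,\ell}$) equipped with its canonical graded Stokes filtration. Decompose $I_1\cap I_2=J_{\theta_o}\sqcup J_{\theta'_o}$, with $J_{\theta_o}\ni\theta_o$ and $J_{\theta'_o}\ni\theta'_o$, and glue $\cG^{(1)}$ to $\cG^{(2)}$ using $S$ on $J_{\theta_o}$ and $S'$ on $J_{\theta'_o}$. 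By openness of $\letheta$ in~$\theta$, the total order $c_1\lethetao\cdots\lethetao c_r$ persists throughout $J_{\theta_o}$, so block-upper triangularity of $S$ with respect to this numbering is equivalent to $S$ sending $\cG^{(1)}_{\leq c_o}|_{J_{\theta_o}}$ into $\cG^{(2)}_{\leq c_o}|_{J_{\theta_o}}$ for every $c_o\in\CC$. Since the order reverses at~$\theta'_o$, the block-lower triangularity of $S'$ relative to the same numbering is block-upper triangularity with respect to $\leqthetapo$, so $S'$ likewise preserves the natural filtrations on $J_{\theta'_o}$. The filtrations $\cG^{(\ell)}_\bullet$ therefore glue to a well-defined filtration $\cL_\bullet$ on the glued local system~$\cL$.

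It remains to verify that $(\cL,\cL_\bullet)$ is a Stokes-filtered local system and that applying the functor returns the starting data. Axioms \eqref{enum:Stokesfil1}--\eqref{enum:Stokesfil3} are local on $S^1$ and hold tautologically on each graded $(\cG^{(\ell)},\cG^{(\ell)}_\bullet)$, hence persist after gluing on $(\cL,\cL_\bullet)$; the Stokes data extracted from the construction obviously coincide, through the splittings forced by Proposition~\ref{prop:strict}\eqref{prop:strict1}, with $(G_{c,1},G_{c,2},S,S')$. I expect the main technical point to be the precise dictionary between the combinatorial block-triangularity condition in Definition~\ref{def:catStokesdata} and the pointwise compatibility of $S,S'$ with the orders~$\leqtheta$ near $\theta_o$ and $\theta'_o$, in particular the role of the order reversal at $\theta'_o$, which is exactly what allows a block-lower matrix~$S'$ to encode a Stokes-filtration-preserving gluing. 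Once this dictionary is spelled out, both directions of the equivalence follow formally, and the bijection on $\Hom$ sets is simultaneously obtained from Proposition~\ref{prop:strict}\eqref{prop:strict2}.
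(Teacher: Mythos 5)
Your proposal is correct and follows exactly the route the paper intends: full faithfulness from Proposition~\ref{prop:strict}\eqref{prop:strict2}, and essential surjectivity by gluing the graded constant filtered local systems on $I_1,I_2$ via $S$ and $S'$ on the two components of $I_1\cap I_2$, using the order reversal at $\theta'_o$ to translate block-lower triangularity into filtration compatibility. The only point worth tightening is that the gluing must identify the filtrations (not just map one into the other), which is where the invertibility of the diagonal blocks $S_{ii}$, $S'_{ii}$ required in Definition~\ref{def:catStokesdata} enters.
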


\begin{definition}[\addedm{minimality property}]\label{def:critminext}
\replacedm{We say that the Stokes data \ref{eq:catStokesdata} satisfy the \emph{minimality property} if the vector space $K_c\defin\{v\in G_{c,1}\mid S(v)=S'(v)\in G_{c,2}\}$ is equal to zero for any $c\in C$.}{We say that the Stokes data \ref{eq:catStokesdata} do not satisfy the minimality property if there exist $c\in C$ and $v\in G_{c,1}\addedo{\moins\{0\}}$\deletedo{ in} such that $S(v)=S'(v)\in G_{c,2}$.}
\end{definition}
\addedo{\begin{remarque}\label{rem:minassumption}
Notice that, if $S-S'$ is invertible, the minimality property is automatically satisfied. Notice also that \replacedm{$K_c$ is the subspace of $G_{c,1}$ consisting of eigenvectors of~$T_1$ (and thus of $T_{c,1}$) with eigenvalue $1$}{$v\in G_{c,1}\moins\{0\}$ breaks the minimality property if and only if $v$ is an eigenvector of $T_1$ (and thus of $T_{c,1}$) with eigenvalue $1$}.
\end{remarque}}

\begin{lemme}\label{lem:equivminL}
Under the equivalence of Proposition \ref{prop:datafiltered}, the Stokes data attached to $(\cL,\cL_\bbullet)$ satisfy the minimality property of Definition \ref{def:critminext} if and only if $(\cL,\cL_\bbullet)$ has no subobject $(\cL',\cL'_\bbullet)$ (in the category of Stokes-filtered local systems) such that \hbox{$\cL'=\kk_{S^1}$}.
\end{lemme}

\begin{proof}
A subobject $(\cL',\cL'_\bbullet)$ of $(\cL,\cL_\bbullet)$ corresponds to a subdiagram of \eqref{eq:Stokesdata} compatible with the splittings \eqref{eq:splittings}. The condition in Definition \ref{def:critminext} amounts to the existence of a subdiagram
\begin{equation}
\tag*{\protect(\ref{lem:equivminL})($*$)}\label{eq:equivminL}
\xymatrix@C=1.5cm{
\kk\cdot v\ar@/^1pc/[r]^-{S_{\theta_o}}\ar@/_1pc/[r]_-{S_{\theta'_o}}&\hspace*{4mm}\kk\cdot(S_{\theta_o} v)
}
\end{equation}
of \eqref{eq:Stokesdata} with $v\in G_{c_{j_o},1}$ and $S_{\theta_o} v=S_{\theta'_o} v\in G_{c_{j_o},2}$, therefore compatible with the splittings \eqref{eq:splittings}, and $v\in L_1$ satisfies $T_1v=v$. Therefore it corresponds to $(\cL',\cL'_\bbullet)\subset(\cL,\cL_\bbullet)$ with $\cL'=\kk_{S^1}$.
\end{proof}
\addedo{\begin{remarque}
Definition \ref{def:critminext} and Lemma \ref{lem:equivminL} fit to Lemma \ref{lem:critminext} and to the property that~$M$ is a minimal extension via Propositions \ref{prop:equivconnStokesloc} and \ref{prop:datafiltered}.
\end{remarque}}

\section{Natural operations on Stokes filtrations and Stokes data}\label{sec:operations}
\subsection{Involution}\label{subsec:involution}
Let~$\iota$ be the involution $z\mto-z$, which is induced on $\addedo{\RR/2\pi\ZZ}$ by $\theta\mto\theta'\defin\theta+\pi$. Given a Stokes-filtered local system $(\cL,\cL_\bbullet)$, we define $\iota^{-1}(\cL,\cL_\bbullet)$ in the following way:
\begin{itemize}
\item
the corresponding local system is $\iota^{-1}\cL$, so that $(\iota^{-1}\cL)_\theta=\cL_{\theta'}$,
\item
the filtration $(\iota^{-1}\cL)_\bbullet$ is defined by $(\iota^{-1}\cL)_{\leq c}=\iota^{-1}(\cL_{\leq-c})$, hence $(\iota^{-1}\cL)_{\leq c,\theta}=\cL_{\leq-c,\theta'}$.
\end{itemize}
Note that the filtration defined above is increasing, that is, $c'\leqtheta c\iff -c'\leqthetap -c$. The monodromy data $\iota^{-1}(L_1,L_2,S_{\theta_o},S_{\theta'_o})$ of $\iota^{-1}\cL$ are given by $(L_2,L_1,S_{\theta'_o}^{-1},S_{\theta_o}^{-1})$. The Stokes data of $\iota^{-1}(\cL,\cL_\bbullet)$ are given by
\begin{equation}\tag*{$\iota^{-1}(\protect\ref{eq:Stokesdata})$}\label{eq:iotaStokesdata}
\begin{array}{c}
\xymatrix@C=1.5cm{
\tbigoplus_{i=1}^rG_{c_i,2}\ar@/^1pc/[r]^-{S_{\theta'_o}^{-1}} \ar@/_1pc/[r]_-{S_{\theta_o}^{-1}}&\tbigoplus_{i=1}^rG_{c_i,1}.
}
\end{array}
\end{equation}
\deletedo{and one should write $G_{c_i,2}=(\iota^{-1}G)_{-c_i,1}$, and vice-versa, to understand that $S_{\theta'_o}^{-1}$ is upper triangular.}

In other words, \replacedo{\ref{eq:iotaStokesdata} defines a functor $\iota$ from the category of Stokes data of type $(C,\theta_o)$ to that of type $(-C,\theta'_o)$, and the corresponding equivalences of Proposition \ref{prop:datafiltered} are compatible with $\iota$ on both categories}{defining the action of $\iota$ as above on the category of Stokes data with exponential factors contained in~$C$, the equivalence of Proposition \ref{prop:datafiltered} is compatible with $\iota$}.

\addedn{Let us note that, although the local systems $\cL$ and $\iota^{-1}\cL$ are isomorphic (since $\iota$ is homotopic to the identity), the Stokes-filtered local systems $(\cL,\cL_\bbullet)$ and $\iota^{-1}(\cL,\cL_\bbullet)$ are in general not isomorphic. For example, they are isomorphic if both $S$ and $S'$ are block-diagonal, an isomorphism of the corresponding Stokes data being given by the pair of morphisms $(S^{\prime-1}SS^{\prime-1},SS^{\prime-1}S)$.}

\subsection{Duality}\label{subsec:duality}
Let $(\cL,\cL_\bbullet)$ be a Stokes-filtered local system. The dual local system $\cL^\vee$ comes equipped with a \deletedo{Stokes} filtration $(\cL^\vee)_\bbullet$ defined by
\[
(\cL^\vee)_{\leq c}=(\cL_{<-c})^\perp,
\]
where the orthogonality is relative to duality\addedo{ that is, $(\cL_{<-c})^\perp$ consists of local morphisms $\cL\to\kk_{S^1}$ sending $\cL_{<-c}$ to~$0$. Using, in a neighbourhood of $e^{i\theta}\in S^1$, a local splitting of $\cL$ as $\bigoplus_{c_i\in C}\gr_{c_i}\cL$ compatible with the Stokes filtration, we get a corresponding local splitting $\cL^\vee\simeq\bigoplus_{c_i\in C}(\gr_{c_i}\cL)^\vee$, and a germ at $e^{i\theta}$ of a morphism $\varphi$ has components $\varphi_i$. Then $\varphi\in(\cL_{<-c})^\perp_\theta$ if and only if its components $\varphi_i$ vanish whenever $\beta_{c_i<-c}\gr_{c_i}\cL\neq0$ somewhere near $\theta$. So the only possible nonzero components~$\varphi_i$ of $\varphi$ occur when $-c_i\leqtheta c$. If we set $\gr_{-c_i}\cL^\vee\defin(\gr_{c_i}\cL)^\vee$, this shows that $(\cL^\vee)_{\leq c} $ locally splits near $e^{i\theta}$ as $\bigoplus_i\beta_{-c_i\leq c}\gr_{-c_i}\cL^\vee$, defining thus a Stokes filtration satisfying $\gr_c(\cL^\vee)=(\gr_{-c}\cL)^\vee$ for any $c\in \CC$}. The monodromy data $(L_1,L_2,S_{\theta_o},S_{\theta'_o})^\vee$ are given by $(L_1^\vee,L_2^\vee,\tS_{\theta_o}^{-1},\tS_{\theta'_o}^{-1})$, where $\tS$ denotes the adjoint by duality of $S$. The Stokes data are given by
\begin{equation}\tag*{$(\protect\ref{eq:Stokesdata})^\vee$}\label{eq:tStokesdata}
\begin{array}{c}
\xymatrix@C=1.5cm{
\tbigoplus_{i=1}^r(G_{c_i,1})^\vee\ar@/^1pc/[r]^-{\tS_{\theta_o}^{-1}} \ar@/_1pc/[r]_-{\tS_{\theta'_o}^{-1}}&\tbigoplus_{i=1}^r(G_{c_i,2})^\vee.
}
\end{array}
\end{equation}
\deletedo{where $\tS_{\theta_o}^{-1}$ is upper triangular if one sets $(G_{c_i,k})^\vee=(G^\vee)_{-c_i,k}$, $k=1,2$.} Let us define \addedo{the} duality \addedo{functor} \replacedo{from}{in} the category of Stokes data \addedo{of type $(C,\theta_o)$ to that of type $(-C,\theta_o)$} by the previous formula\addedo{ (because we use the reverse numbering of $C$ to get that $\tS_{\theta_o}^{-1}$ is upper triangular, that is, the numbering of $-C$ induced by~$\theta_o$)}. Then the equivalence of Proposition \ref{prop:datafiltered} is compatible with duality.

Let us now compare with Poincaré-Verdier duality of sheaves on $S^1$. For a sheaf~$\cF$ on $S^1$, we denote by $\DD\cF=\bR\cHom_{\kk}(\cF,\kk_{S^1}[1])$ its Poincaré-Verdier dual and by $\DD'\cF=\bR\cHom_{\kk}(\cF,\kk_{S^1})$ the shifted complex. We clearly have $\DD'\cL=\cL^\vee$.

\begin{lemme}\label{lem:dualS1}
For each $c\in\CC$, the complexes $\DD'(\cL_{\leq c})$ and $\DD'(\cL/\cL_{\leq c})$ are sheaves and $\DD'(\cL/\cL_{\leq c})=(\cL_{\leq c})^\perp=(\cL^\vee)_{<-c}$.
\end{lemme}

The first statement means that $\cH^k\DD'(\cL_{\leq c})=0$ if $k\neq0$, and thus $\DD'(\cL_{\leq c})$ is quasi-isomorphic to $\cH^0\DD'(\cL_{\leq c})=\cHom_{\kk}(\addedo{\cL_{\leq c}},\kk_{S^1})$ and similarly for $\cL/\cL_{\leq c}$.

\begin{proof}
The first assertion is local on $S^1$, so we can assume that~$\cL$ is split with respect to the Stokes filtration. Near $\theta_o\in\RR/2\pi\ZZ$, we therefore only need to consider \replacedo{two}{three} cases:
\begin{enumerate}
\item
$\cL_{\leq c}$ is a local system on $(\theta_o-\epsilon,\theta_o+\epsilon)$,

\deletedo{(2) $\cL_{\leq c}=0$ on $(\theta_o-\epsilon,\theta_o+\epsilon)$,}
\item
$\cL_{\leq c}=j_!\cL$, where~$\cL$ is a local system on $(\theta_o-\epsilon,\theta_o)$ and $j: (\theta_o-\epsilon,\theta_o)\hto(\theta_o-\epsilon,\theta_o+\epsilon)$ is the open inclusion.
\end{enumerate}
The first \replacedo{case is}{two cases are} clear. For the \replacedo{second}{third} one, note that $\DD'j_!\cL=j_*\cL^\vee$.

The argument for $\DD'(\cL/\cL_{\leq c})$ is similar (but goes in the opposite direction in the \replacedo{second}{third} case). We conclude that we have an exact sequence of sheaves
\[
0\to\DD'(\cL/\cL_{\leq c})\to\DD'\cL\to\DD'(\cL_{\leq c})\to0,
\]
hence the last assertion.
\end{proof}

\subsection{$\iota$-Sesquilinear forms}\label{subsec:sesq}
We assume here that $\kk=\CC$ (or that $\kk$ has an involution, that we denote by $\overline{\phantom{X}}$). Let $\varh:\cL\otimes\iota^{-1}\ov\cL\to\kk$ be linear, where $\ov\cL$ denotes the conjugate of~$\cL$ with respect to the involution (in what follows, one can assume that the involution is the identity and get similar results for $\iota$-bilinear forms). We call~$\varh$ a $\iota$-sesquilinear form on~$\cL$. Using the previous monodromy data, giving~$\varh$ amounts to giving two sesquilinear forms
\[
\varh_{1\ov2}:L_1\otimes\ov L_2\to\kk,\qquad
\varh_{2\ov1}:L_2\otimes\ov L_1\to\kk
\]
such that, considering them as morphisms $\ov L_2\to L_1^\vee$ and $\ov L_1\to L_2^\vee$, the following diagrams commute (defining $\varh_{\theta_o},\varh_{\theta'_o}$)
\begin{equation}\label{eq:hhh}
\begin{array}{c}
\xymatrix{
\ov L_2\ar[r]^-{\varh_{1\ov2}}\ar[d]_{\ov a'_2}&L_1^\vee\ar[d]^{{}^t\!a_1^{-1}}\\
\ov L_{\theta'_o}\ar[r]^-{\varh_{\theta_o}}&L_{\theta_o}^\vee\\
\ov L_1\ar[r]^-{\varh_{2\ov1}}\ar[u]^{\ov a'_1}&L_2^\vee\ar[u]_{{}^t\!a_2^{-1}}
}
\end{array}
\qquad\qquad
\begin{array}{c}
\xymatrix{
\ov L_2\ar[r]^-{\varh_{1\ov2}}\ar[d]_{\ov a_2}&L_1^\vee\ar[d]^{{}^t\!a_1^{\prime-1}}\\
\ov L_{\theta_o}\ar[r]^-{\varh_{\theta'_o}}&L_{\theta'_o}^\vee\\
\ov L_1\ar[r]^-{\varh_{2\ov1}}\ar[u]^{\ov a_1}&L_2^\vee\ar[u]_{{}^t\!a_2^{\prime-1}}
}
\end{array}
\end{equation}
that is,
\begin{equation}\label{eq:h12h21}
\varh_{2\ov1}(\cbbullet,\ov\cbbullet)=\varh_{1\ov2}(S_{\theta_o}^{-1}\cbbullet,\ov{S_{\theta'_o}\cbbullet})=\varh_{1\ov2}(S_{\theta'_o}^{-1}\cbbullet,\ov{S_{\theta_o}\cbbullet}).
\end{equation}
In particular, $\varh_{2\ov1}$ determines $\varh_{1\ov2}$. We say that~$\varh$ is nondegenerate if it induces an isomorphism $\iota^{-1}\ov\cL\isom\cL^\vee$, that is, if $\varh_{2\ov1}$ (hence $\varh_{1\ov2}$) is nondegenerate. We say that~$\varh$ is $\iota$-skew-Hermitian if $\iota^{-1}\ov \varh=-\varh$ (with an obvious meaning), that is, if
\begin{equation}\label{eq:h21ovh12}
\varh_{2\ov1}(x_2,\ov x_1)=-\ov{\varh_{1\ov2}(x_1,\ov x_2)}.
\end{equation}

\begin{remarque}[Various forms of $\varh$]\label{rem:11th}
\setcounter{equation}{0}
\let\oldtheequation\theequation
\def\theequation{\alph{equation}}
It will be useful to read $\varh$ in the spaces $L_{\theta_o},L'_{\theta_o}$ or only in $L_1$. We will make explicit the formulas between the various forms. We denote by $x_1,y_1$ general elements of $L_1$, $x,y$ of $L_{\theta_o}$ and $x',y'$ of $L'_{\theta_o}$. Firstly, \eqref{eq:hhh} gives
\begin{equation}\refstepcounter{equation}\tag*{(\protect\ref{rem:11th})(\theequation)}\label{eq:varhthetao}
\begin{split}
\varh_{\theta_o}(x,\ov{y'})&=\varh_{1\ov2}(a_1^{-1}x,\ov{a_2^{\prime-1}y'})\addedo{{}=\varh_{2\ov1}(a_2^{-1}x,\ov{a_1^{\prime-1}y'}),}\\
\varh_{\theta'_o}(x',\ov y)&=\varh_{1\ov2}(a_1^{\prime-1}x',\ov{a_2^{-1}y})\addedo{{}=\varh_{2\ov1}(a_2^{\prime-1}x',\ov{a_1^{-1}y})}.
\end{split}
\end{equation}
Let us define $\varh_{1\ov1}^{\theta_o},\varh_{1\ov1}^{\theta'_o}:L_1\otimes\ov L_1\to\kk$ by
\begin{align}\refstepcounter{equation}\tag*{(\protect\ref{rem:11th})(\theequation)}\label{eq:varh1ov1}
\varh_{1\ov1}^{\theta_o}(x_1,\ov y_1)&=\varh_{1\ov2}(x_1,\ov{S_{\theta_o}y_1}),&
\varh_{1\ov1}^{\theta'_o}(x_1,\ov y_1)&=\varh_{1\ov2}(x_1,\ov{S_{\theta'_o}y_1}).
\end{align}
Then
\begin{equation}\refstepcounter{equation}\tag*{(\protect\ref{rem:11th})(\theequation)}\label{eq:varh1ov1prime}
\varh_{1\ov1}^{\theta'_o}(x_1,\ov y_1)=\varh_{1\ov1}^{\theta_o}(x_1,\ov{T_1y_1}),
\end{equation}
and \eqref{eq:h12h21} is equivalent to
\begin{equation}\refstepcounter{equation}\tag*{(\protect\ref{rem:11th})(\theequation)}\label{eq:varh1ov1T1}
\varh_{1\ov1}^{\theta_o}(T_1x_1,\ov{T_1y_1})=\varh_{1\ov1}^{\theta_o}(x_1,\ov y_1)\quad\addedo{\text{and to}}\quad
\varh_{1\ov1}^{\theta'_o}(T_1x_1,\ov{T_1y_1})=\varh_{1\ov1}^{\theta'_o}(x_1,\ov y_1).
\end{equation}
We also get
\begin{equation}\refstepcounter{equation}\tag*{(\protect\ref{rem:11th})(\theequation)}\label{eq:varh1ov1thetao}
\addedo{\varh_{1\ov1}^{\theta'_o}(x_1,\ov y_1)}=\varh_{\theta_o}(a_1x_1,\ov{a'_1y_1})\quad\addedo{\text{and}}\quad
\addedo{\varh_{1\ov1}^{\theta_o}(x_1,\ov y_1)}=\varh_{\theta'_o}(a'_1x_1,\ov{a_1y_1}).
\end{equation}
Moreover, $\varh$ is $\iota$-skew-Hermitian iff
\begin{equation}\refstepcounter{equation}\tag*{(\protect\ref{rem:11th})(\theequation)}\label{eq:11th}
\varh_{1\ov1}^{\theta_o}(y_1,\ov x_1)=-\ov{\varh_{1\ov1}^{\theta'_o}(x_1,\ov y_1)}.
\end{equation}
\let\theequation\oldtheequation
\setcounter{equation}{5}
\end{remarque}

\begin{remarque}[The form induced on $\im\can_1$]\label{rem:imcan}
\setcounter{equation}{0}
\let\oldtheequation\theequation
\def\theequation{\alph{equation}}
Let us set $\can_1:=\id-T_1:L_1\to L_1$. We have the following relations
\begin{equation}\refstepcounter{equation}\tag*{(\protect\ref{rem:imcan})(\theequation)}\label{eq:h1ov1can1}
\begin{split}
\varh_{1\ov1}^{\theta_o}(x_1,\ov{\can_1y_1})&=\varh_{1\ov1}^{\theta_o}(x_1,\ov y_1)-\varh_{1\ov1}^{\theta_o}(x_1,\ov{T_1y_1})\\
&=\varh_{1\ov1}^{\theta_o}(x_1,\ov y_1)-\varh_{1\ov1}^{\theta'_o}(x_1,\ov y_1)\quad\text{after \ref{eq:varh1ov1prime}},
\end{split}
\end{equation}
and similarly
\begin{equation}\refstepcounter{equation}\tag*{(\protect\ref{rem:imcan})(\theequation)}\label{eq:h1ov1can1prime}
\varh_{1\ov1}^{\theta'_o}(\can_1x_1,\ov y_1)=\varh_{1\ov1}^{\theta'_o}(x_1,\ov y_1)-\varh_{1\ov1}^{\theta_o}(x_1,\ov y_1)=-\varh_{1\ov1}^{\theta_o}(x_1,\ov{\can_1y_1}).
\end{equation}
Let us set $F_1=\im\can_1$. Then $\varh_{1\ov1}^{\theta_o}$ defines a sesquilinear pairing $\hh_{1\ov1}^{\theta_o}$ on $F_1$ by setting, for $u_1,v_1\in F_1$ and $u_1=\can_1x_1$, $v_1=\can_1y_1$ for some $x_1,y_1\in L_1$:
\[
\hh_{1\ov1}^{\theta_o}(u_1,\ov v_1)\defin\varh_{1\ov1}^{\theta_o}(x_1,\ov v_1).
\]
This is independent of the choice of $x_1$: if $\can x_1=0$, we deduce from \ref{eq:h1ov1can1prime}
\[
\varh_{1\ov1}^{\theta_o}(x_1,\ov v_1)=\varh_{1\ov1}^{\theta_o}(x_1,\ov{\can_1y_1})=-\varh_{1\ov1}^{\theta'_o}(\can_1x_1,\ov y_1)=0.
\]
We also set $\hh_{1\ov1}^{\theta'_o}(u_1,\ov v_1)\defin\varh_{1\ov1}^{\theta'_o}(u_1,\addedo{\ov y_1})$. Then $\hh_{1\ov1}^{\theta'_o}(u_1,\ov v_1)=-\hh_{1\ov1}^{\theta_o}(u_1,\ov v_1)$.

If $\varh$ is nondegenerate, then so is $\hh_{1\ov1}^{\theta_o}$ on $F_1$: assume that $\hh_{1\ov1}^{\theta_o}(u_1,\ov v_1)=0$ for all $v_1\in F_1$. Then $\varh_{1\ov1}^{\theta_o}(x_1,\ov{\can_1y_1})=0$ for all $y_1\in L_1$, and as above this implies that $u_1=\can_1x_1=0$ since
$\varh_{1\ov1}^{\theta_o}$ is nondegenerate on $L_1$.

Lastly, if $\varh$ is $\iota$-skew-Hermitian, then $\hh_{1\ov1}^{\theta_o}$ is Hermitian on $F_1$: we have
\begin{align*}
\hh_{1\ov1}^{\theta_o}(v_1,\ov u_1)&=\varh_{1\ov1}^{\theta_o}(y_1,\ov{\can_1x_1})=\varh_{1\ov1}^{\theta_o}(y_1,\ov x_1)-\varh_{1\ov1}^{\theta'_o}(y_1,\ov x_1)\\
&=-\ov{\varh_{1\ov1}^{\theta'_o}(x_1,\ov y_1)}+\ov{\varh_{1\ov1}^{\theta_o}(x_1,\ov y_1)}\quad\text{after \ref{eq:11th}}\\
&=\ov{\hh_{1\ov1}^{\theta_o}(u_1,\ov v_1)}.
\end{align*}
\let\theequation\oldtheequation
\setcounter{equation}{6}
\end{remarque}

\subsection{$\iota$-Sesquilinear forms on Stokes-filtered local systems and Stokes data}\label{subsec:sesqstokes}
If $(\cL,\cL_\bbullet)$ is a Stokes-filtered local system, we say that~$\varh$ is \emph{compatible with Stokes filtrations} if the induced morphism $\iota^{-1}\ov\cL\to\cL^\vee$ is so. By Proposition \ref{prop:strict}\eqref{prop:strict2}, $\varh_{1\ov2}$ and $\varh_{2\ov1}$ are block-diagonal. Similarly, given Stokes data \replacedo{$((G_{c,1},G_{c,2})_{c\in C},S,S')$ of type $(C,\theta_o)$}{$(C,G,S)$}, a $\iota$-sesquilinear form on it consists of sesquilinear pairings $\varh_{1\ov2}^{(i)}:G_{c_i,1}\otimes\ov G_{c_i,2}\to\kk$ (and similarly for $\varh_{2\ov1}$) which are compatible with the diagram \ref{eq:catStokesdata} in a natural way. In other words, the equivalence of Proposition \ref{prop:datafiltered} is compatible with $\iota$-sesquilinear forms.

Let us fix~$c\in\CC$ such that
\begin{enumeratea}
\item\label{assumpt:a}
$\gr_c\cL=0$ (that is, $\cL_{<c}=\cL_{\leq c}$).
\end{enumeratea}
According to \eqref{assumpt:a}, the morphism $\varh:\iota^{-1}\ov\cL\to\cL^\vee$ induces
\[
\varh_c:\iota^{-1}(\ov\cL_{\leq c})=(\iota^{-1}\ov\cL)_{\leq -c}\to(\cL^\vee)_{\leq -c}=(\cL_{\leq c})^\perp=(\cL/\cL_{\leq c})^\vee,
\]
that we consider as a pairing
\begin{equation}\label{eq:hc}
\varh_c:(\cL/\cL_{\leq c})\otimes\iota^{-1}(\ov\cL_{\leq c})\to\kk_{S^1}
\end{equation}
Moreover, if~$\varh$ is nondegenerate, then~$\varh_c$ is nondegenerate in the sense that~$\varh_c$ induces an isomorphism
\begin{equation}\label{eq:hcnondeg}
\iota^{-1}(\ov\cL_{\leq c})\isom\DD'(\cL/\cL_{\leq c})=\cHom_{\kk}(\cL/\cL_{\leq c},\kk_{S^1})=(\cL/\cL_{\leq c})^\vee.
\end{equation}

\begin{remarque}[The form induced on $K_c$]\label{rem:Kc}
\addedm{For $c\in C$, let $K_c\subset G_{c,1}$ be the vector space introduced in Definition \ref{def:critminext}. Together with $S$ or $S'$, the sesquilinear form $\varh_{1\ov2}$ produces a sesquilinear form $\varh_{K_c}:K_c\otimes\ov K_c\to\kk$ by the formula $\varh_{K_c}(x_1,\ov y_1)=\varh_{1\ov2}(x_1,\ov{S(y_1)})$. Since $S=S'$ on $K_c$, the form $\varh_{K_c}$ is skew-Hermitian, according to \eqref{eq:h12h21} and \eqref{eq:h21ovh12}. We claim that, for $c_o\in C$, \emph{the form $\varh_{K_{c_o}}$ is nondegenerate if and only if the Stokes data $(K_{c_o},S(K_{c_o}),S,S)$ enriched with the induced $\varh_{1\ov2}$ are a direct summand of the Stokes data $((G_{c,1})_{c\in C},(G_{c,2})_{c\in C},S,S')$ enriched with $\varh_{1\ov2}$}.}

\addedm{Indeed, since $\varh_{1\ov2}$ is block-diagonal, $\varh_{K_{c_o}}$ is nondegenerate if and only if $K_{c_o}\cap\nobreak S(K_{c_o})^\perp=\{0\}$, where the orthogonal is taken with respect to $\varh_{1\ov2}$. Notice that $S(K_{c_o})^\perp=(S(K_{c_o})^\perp\cap G_{c_o,1})\oplus\bigoplus_{c\neq c_o\in C}G_{c,1}$. A similar statement holds for $S(K_{c_o})$ and $K_{c_o}^\perp$ in $\bigoplus_{c\in C}G_{c,2}$. Then, if $\varh_{K_{c_o}}$ is nondegenerate, we have $\bigoplus_{c\in C}G_{c,1}=K_{c_o}\oplus S(K_{c_o})^\perp$, $\bigoplus_{c\in C}G_{c,2}=S(K_{c_o})\oplus K_{c_o}^\perp$, and it remains to check that $S$ and $S'$ send $S(K_{c_o})^\perp$ to $K_{c_o}^\perp$, which follows from \eqref{eq:h12h21} and \eqref{eq:h21ovh12}. The converse is proved similarly.}

\addedm{We also notice that, if the previous splitting property is satisfied for each $c\in C$, then
\begin{multline*}
((G_{c,1})_{c\in C},(G_{c,2})_{c\in C},S,S',\varh_{1\ov2})\\
=\Big(\tbigoplus_{c\in C}(K_c,S(K_c),S,S,\varh_{1\ov2})\Big)\oplus((G'_{c,1})_{c\in C},(G'_{c,2})_{c\in C},S,S',\varh'_{1\ov2}),
\end{multline*}
where the last term satisfies the minimality property of Definition \ref{def:critminext}. Indeed, we then have $K'_c=0$ for any $c\in C$.
}
\end{remarque}

\subsection{Description at $\theta_o,\theta'_o$}\label{subsec:htheta}
Let us fix bases of $G_{c_i,k}$, $k=1,2$, $i=1,\dots,r$, in such a way that the matrix of $\varh_{1\ov2}^{(i)}$ is the identity. If we denote by $\Sigma_{\theta_o},\Sigma_{\theta'_o}$ the matrices of $S_{\theta_o},S_{\theta'_o}$ in these bases (recall that $\Sigma_{\theta_o}$ is block-upper triangular and $\Sigma_{\theta'_o}$ is block-lower triangular), then the matrix of $\varh_{1\ov1}^{\theta_o}$ is $\ov\Sigma_{\theta_o}$, that of $\varh_{1\ov1}^{\theta'_o}$ is $\ov\Sigma_{\theta'_o}$. Moreover, according to \ref{eq:11th}, $\varh$ is $\iota$-skew-Hermitian iff
\begin{equation}\label{eq:sigmatr}
\Sigma_{\theta'_o}=-{}^t\ov\Sigma_{\theta_o}.
\end{equation}

The results of Remark \ref{rem:imcan} can be read in $L_{\theta_o}$ via $a_1,a'_1:L_1\isom L_{\theta_o},L_{\theta'_o}$. We set $\can\addedo{_{\theta'_o}}\defin S_1^{-1}-S_2^{-1}:L_{\theta'_0}\to L_{\theta_o}$ and $F=\im\can\addedo{_{\theta'_o}}\subset L_{\theta_o}$. Then $\varh_{\theta'_o}:L_{\theta'_o}\otimes \ov{L_{\theta_o}}\to\kk$ induces $\hh_{\theta'_o}:F\otimes\ov F\to\kk$ by setting $\hh_{\theta'_o}(u,\ov v)=\varh_{\theta'_o}(x',\ov v)$ for some (or any) $x'\in L_{\theta'_o}$ such that $\can\addedo{_{\theta'_o}} x'=u$. As above, one checks that if $\varh$ is nondegenerate (\resp $\iota$-skew-Hermitian), then $\hh_{\theta'_o}$ is nondegenerate (\resp Hermitian) on $F$.

\addedm{On the other hand, the vector space $K_c$ is the intersection of the radical of $\Sigma_{\theta_o}+{}^t\ov\Sigma_{\theta_o}$ with $G_{c,1}$, and the matrix of $\varh_{K_c}$ is the conjugate of that of $S_{|K_c}$. If the splitting property at $c_o$ considered in Remark \ref{rem:Kc} is satisfied, and if we choose correspondingly bases of $G_{c_o,1}$ and $G_{c_o,2}$, the diagonal block $\Sigma_{\theta_o,c_oc_o}$ is itself block-diagonal with respect to this splitting, and the block $\Sigma_{\theta_o,K_{c_o}}$ is skew-adjoint.}

\begin{corollaire}\label{cor:posdefprime}
Assume $\kk=\RR$ or $\CC$. If $\varh$ is nondegenerate and $\iota$-skew-Hermitian, and if the Hermitian matrix $\Sigma_{\theta_o}+{}^t\ov\Sigma_{\theta_o}$ is positive semi-definite, then $\hh_{\theta'_o}$ is positive definite on $F$.
\end{corollaire}

\begin{proof}
Since $\hh_{\theta'_o}$ is nondegenerate, it is enough to show that $\hh_{\theta'_o}(u,\ov u)\geq0$ for all $u\in F$. Set $u=\can\addedo{_{\theta'_o}} x'$ and $x_1=a_1^{\prime-1}x'$. Then
\[
\hh_{\theta'_o}(u,\ov u)=\varh_{\theta'_o}(x',\ov{\can\addedo{_{\theta'_o}} x'})=\varh_{1\ov2}(a_1^{\prime-1}x',\ov{a_2^{-1}(S_1^{-1}-S_2^{-1})x'}).
\]
Now,
\[
a_2^{-1}S_1^{-1}x'=a_2^{-1}S_1^{-1}a'_1x_1=S_{\theta_o}x_1\quad\text{and}\quad
a_2^{-1}S_2^{-1}x'=a_2^{-1}S_2^{-1}a'_1x_1=S_{\theta'_o}x_1,
\]
hence $\hh_{\theta'_o}(u,\ov u)=\varh_{1\ov2}(x_1,\ov{(S_{\theta_o}-S_{\theta'_o})x_1})$. Since $\varh$ is $\iota$-skew-Hermitian, the matrix of $\varh_{1\ov2}(\cbbullet,\ov{(S_{\theta_o}-S_{\theta'_o})\cbbullet})$ is $\ov\Sigma_{\theta_o}+{}^t\Sigma_{\theta_o}$ after \eqref{eq:sigmatr}, which is positive semi-definite by assumption, hence $\hh_{\theta'_o}(u,\ov u)\geq0$.
\end{proof}

\subsection{Sesquilinear forms on the cohomology}\label{subsec:sesqcohom}
We now fix~$c\in\CC$ such that
\begin{enumeratea}
\item\label{assumpt:aa}
$\gr_c\cL=0$, and
\item\label{assumpt:b}
\deletedo{there exists $\theta_o\in S^1$ such that} $c_i\lethetao c$ for any $i$ (that is, all the~$c_i$ lie in an open half-plane with boundary going through~$c$).
\end{enumeratea}

Let us first compute the cohomology.

\begin{lemme}\label{lem:cohomS1}
We have $H^k(S^1,\cL_{\leq c})=0$ if $k\neq1$, $H^k(S^1,\cL/\cL_{\leq c})=0$ if $k\neq0$ and the exact sequence $0\to\cL_{\leq c}\to\cL\to\cL/\cL_{\leq c}\to0$ induces an exact sequence \addedo{(defining the morphism $\can$):}
\[
0\to H^0(S^1,\cL)\to H^0(S^1,\cL/\cL_{\leq c})\To{\can} H^1(S^1,\cL_{\leq c})\to H^1(S^1,\cL)\to 0.
\]
\end{lemme}

\begin{proof}
We compute the cohomology with the covering $(I_1,I_2)$. Then $H^k(I_1,\cL_{\leq c})=\nobreak0$ for any $k$ (and similarly for $I_2$): indeed, because of \eqref{assumpt:aa}, there is a Stokes direction in~$I_1$ for the pair $(c,c_i)$ for each~$i$, and according to the splitting given by Proposition \ref{prop:strict}\eqref{prop:strict1}, $\cL_{\leq c|I_1}$ decomposes as the direct sum of sheaves, each of which is constant on a proper open interval of~$I_1$ and $0$ on the complementary set, which is also connected; the assertion follows from the vanishing of $H^k_{(0,1)}([0,1),\kk)$ for any $k$. We also have $H^k(I_1\cap I_2,\cL_{\leq c})=0$ for $k\neq0$ and $H^0(I_1\cap I_2,\cL_{\leq c})=L_{\theta_o}$ after \eqref{assumpt:b}. We conclude that $H^1(S^1,\cL_{\leq c})=H^0(I_1\cap I_2,\cL_{\leq c})=L_{\theta_o}$.

Similarly, $H^k(I_j,\cL/\cL_{\leq c})=0$ ($j=1,2$) for $k\neq0$ follows from the similar vanishing of $H^k([0,1),\kk)$. We also have $H^k(I_1\cap I_2,\cL/\cL_{\leq c})=0$ for $k\neq0$ and $H^0(I_1\cap\nobreak I_2,\cL/\cL_{\leq c})\simeq L_{\theta'_o}$. Moreover, the restriction morphisms $H^0(I_j,\cL/\cL_{\leq c})\to H^0(I_1\cap\nobreak I_2,\cL/\cL_{\leq c})$ are isomorphisms. Therefore, the \v{C}ech complex
\[
H^0(I_1,\cL/\cL_{\leq c})\oplus H^0(I_2,\cL/\cL_{\leq c})\to H^0(I_1\cap I_2,\cL/\cL_{\leq c})
\]
has cohomology in degree $0$ at most.
\end{proof}

\begin{proposition}\label{prop:Hhc}
If~$c$ satisfies Assumptions \eqref{assumpt:aa} and \eqref{assumpt:b}, the natural pairing induced by~$\varh_c$\addedo{ from \eqref{eq:hc}}:
\begin{equation}\tag*{(\protect\ref{prop:Hhc})$(*)$}\label{eq:Hhc}
\varh_c:H^0(S^1,\cL/\cL_{\leq c})\otimes H^1(S^1,\iota^{-1}\ov\cL_{\leq c})\to H^1(S^1,\kk)=\kk
\end{equation}
is nondegenerate and corresponds to $\varh_{\theta'_o}$, via the isomorphisms
\begin{align*}
H^0(S^1,\cL/\cL_{\leq c})&\isom H^0(I_1\cap I_2,\cL/\cL_{\leq c})=L_{\theta'_o}\\
H^1(S^1,\iota^{-1}\ov\cL_{\leq c})&=H^0(I_1\cap I_2,\iota^{-1}\ov\cL_{\leq c})=\ov L_{\theta_o}.
\end{align*}
\end{proposition}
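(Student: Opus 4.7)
The plan is to compute both sides by \v{C}ech cohomology with respect to the covering $(I_1,I_2)$ of Lemma~\ref{lem:cohomS1}, and to trace the pairing induced by $\varh_c$ through this computation. Writing $I_1\cap I_2=J_+\sqcup J_-$ with $J_+$ the component containing $\theta_o$ and $J_-$ the one containing $\theta'_o$, the first step is to observe that assumption~\eqref{assumpt:b} (applied at both $\theta_o$, where it holds directly, and at $\theta'_o$, where the order reverses) implies that both sheaves $\cL/\cL_{\leq c}$ and $\iota^{-1}\ov\cL_{\leq c}$ vanish in a neighbourhood of $\theta_o$ and have full stalks $L_{\theta'_o}$ and $\ov L_{\theta_o}$, respectively, in a neighbourhood of $\theta'_o$. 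Combined with the vanishings $H^k(I_j,\iota^{-1}\ov\cL_{\leq c})=0$ for all $k$ (proved exactly as in Lemma~\ref{lem:cohomS1}, since $\iota^{-1}\ov\cL_{\leq c}$ on each $I_j$ has the same extension-by-zero structure as $\cL_{\leq c}$), the Mayer--Vietoris identifications $H^0(S^1,\cL/\cL_{\leq c})\isom L_{\theta'_o}$ and $H^1(S^1,\iota^{-1}\ov\cL_{\leq c})\isom \ov L_{\theta_o}$ of the statement reduce to restriction to $J_-$.

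The second step is to trace the cup product through the \v{C}ech picture. A class $\alpha\in H^0(S^1,\cL/\cL_{\leq c})$ is represented by matching sections $(\alpha_1,\alpha_2)\in\bigoplus_j\Gamma(I_j,\cL/\cL_{\leq c})$, and a class $\beta\in H^1(S^1,\iota^{-1}\ov\cL_{\leq c})$ by a \v{C}ech $1$-cocycle $\beta_{12}\in\Gamma(I_1\cap I_2,\iota^{-1}\ov\cL_{\leq c})$. Applying $\varh_c$ of~\eqref{eq:hc} produces the $1$-cocycle $\varh_c(\alpha_1|_{I_1\cap I_2}\otimes\beta_{12})\in\Gamma(I_1\cap I_2,\kk)$, which is supported on $J_-$ because both $\alpha_1|_{J_+}$ and $\beta_{12}|_{J_+}$ vanish. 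Its image in $H^1(S^1,\kk)\isom\kk$ is therefore its constant value on $J_-$, and by construction of $\varh_c$ from $\varh$ this value equals $\varh_{\theta'_o}$ (as defined by the diagrams~\eqref{eq:hhh}) evaluated on the images of $\alpha$ and $\beta$. This identifies \ref{eq:Hhc} with $\varh_{\theta'_o}$, and nondegeneracy then follows since nondegeneracy of $\varh$ forces that of $\varh_{2\ov1}$, hence, through~\eqref{eq:varhthetao}, that of $\varh_{\theta'_o}$.

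The main obstacle I expect is the careful bookkeeping of these identifications: one must consistently track the flat isomorphism $\Gamma(J_-,\cL/\cL_{\leq c})\simeq L_{\theta'_o}$, the composition $\Gamma(J_-,\iota^{-1}\ov\cL_{\leq c})\simeq\ov L_{\theta_o}$ involving the $\iota$-action, and the orientation convention fixing $H^1(S^1,\kk)\isom\kk$, so as to verify that the cup product computation yields $\varh_{\theta'_o}$ exactly with no spurious sign. Once this bookkeeping is in place, the rest of the argument is essentially formal.
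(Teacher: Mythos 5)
Your proposal is correct and follows essentially the same route as the paper: the paper also computes the pairing with the two-set covering $(I_1,I_2)$ (phrased as a Mayer--Vietoris resolution rather than a \v{C}ech cup product, with a symmetrized formula for the degree-one component that reduces to yours because $\Gamma(S^1,\cG^0)=0$), localizes everything to the component of $I_1\cap I_2$ containing $\theta'_o$ using that both sheaves vanish near $\theta_o$, and reads off $\varh_{\theta'_o}$, deducing nondegeneracy from that of $\varh_{\theta'_o}$ (equivalently from \eqref{eq:hcnondeg}).
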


\begin{proof}
That the pairing~$\varh_c$ of \ref{eq:Hhc} is nondegenerate a priori follows from \eqref{eq:hcnondeg}. But this can also be obtained from the second part of the corollary, that we now prove with details. Let us consider the covering $(I_1,I_2)$ of $S^1$ with~$\theta_o$ as in \eqref{assumpt:b} above. As a consequence, $\cF\defin\cL/\cL_{\leq c}$ and $\cG\defin\iota^{-1}\ov\cL_{\leq c}$ are local systems in some neighbourhood of $\ov{I_1\cap I_2}$. Let us also denote by $\cC$ the constant sheaf $\kk_{S^1}$, and by $j_1:I_1\hto S^1$, $j_2:I_2\hto S^1$, $j_{12}:I_1\cap I_2\hto S^1$ the open inclusions. Given a sheaf $\ccF$ on $S^1$, we set $\ccF_a=j_{a,*}j_a^{-1}\ccF$ ($a=1,2,12$), $\ccF^0=\ccF_1\oplus\ccF_2$, and $\ccF^1=\ccF_{12}$. We have a Mayer-Vietoris complex
\[
\ccF^0\To{\delta}\ccF^1,\quad \delta(u_1,u_2)=u_1-u_2.
\]
The following is easy:
\begin{lemme}
Let $\ccF$ be a sheaf on $S^1$. If $\ccF$ is a local system in some neighbourhood of $\ov{I_1\cap I_2}$, then the Mayer-Vietoris complex is a resolution of $\ccF$ on $S^1$.\qed
\end{lemme}

We will apply this lemma to $\cF,\cG,\cC$. The simple complex $s(\cF^\cbbullet\otimes\cG^\cbbullet)$ is therefore a resolution of $\cF\otimes\cG$. The pairing $\varh_c:\cF\otimes\cG\to\cC$ extends as a morphism of complexes $\wt \varh:s(\cF^\cbbullet\otimes\cG^\cbbullet)\to\cC^\cbbullet$ as follows: we set
\begin{align*}
\wt \varh{}^0:\cF^0\otimes\cG^0=(\cF_1\oplus\cF_2)\otimes(\cG_1\oplus\cG_2)&\to\cC^0=(\cC_1\oplus\cC_2)\\
(u_1,u_2)\otimes(v_1,v_2)&\mto(\varh\addedo{_c}(u_1,v_1),\varh\addedo{_c}(u_2,v_2)),\\
\wt \varh{}^1:(\cF^0\otimes\cG^1)\oplus(\cF^1\otimes\cG^0)&\to\cC^1\\
\Big(\big[(u_1,u_2)\otimes v_{12}\big],\big[u_{12}\otimes(v_1,v_2)\big]\Big)&\mto\frac12\Big[\varh\addedo{_c}(u_1+u_2,v_{12})+\varh\addedo{_c}(u_{12},v_1+v_2)\Big],\\
\wt \varh{}^2&=0
\end{align*}
where we implicitly have extended~$\varh\addedo{_c}$ to pairings $\cF_a\otimes\cG_a\to\kk$, $a\in\{1,2,12\}$.

\begin{lemme}
The resolution $s(\cF^\cbbullet\otimes\cG^\cbbullet)$ of $\cF\otimes\cG$ is $\Gamma(S^1,\cbbullet)$-acyclic.
\end{lemme}

\begin{proof}
It is similar to that of Lemma \ref{lem:cohomS1}.
\end{proof}

Clearly, $\cC^\cbbullet$ is also $\Gamma(S^1,\cbbullet)$-acyclic. As a consequence (\cf \cite[Th\ptbl II.4.7.2]{Godement64}), the morphism~$\varh_c$ is expressed by taking $H^1$ of the morphism of complexes
\[
s\Big(\Gamma(S^1,\cF^\cbbullet)\otimes\Gamma(S^1,\cG^\cbbullet)\Big)\to\Gamma\big(S^1,s(\cF^\cbbullet\otimes\cG^\cbbullet)\big)\To{\Gamma(S^1,\wt \varh)}\Gamma(S^1,\cC^\cbbullet).
\]
Using that $\Gamma(S^1,\cG^0)=0$, after Lemma \ref{lem:cohomS1}, we regard~$\varh_c$ as the composition
\[
\xymatrix{
\Gamma(S^1,\cF^0)\otimes\Gamma(S^1,\cG^1)\ar[r]&\Gamma(S^1,\cF^0\otimes\cG^1)\ar[r]^-{\wt \varh{}^1}&\Gamma(S^1,\cC^1)\ar[d]\\
H^0(S^1,\cF)\otimes H^1(S^1,\cG)\ar[u]&&H^1(S^1,\kk)
}
\]
Let $(u,u)\in H^0(S^1,\cF)\subset \Gamma(S^1,\cF_1)\oplus\Gamma(S^1,\cF_2)=L_{\theta'_o}\oplus L_{\theta'_o}$ and let $v\in H^1(S^1,\cG)=\Gamma(I_1\cap I_2,\cG)=\ov L_{\theta_o}$. Using the previous formula for $\wt \varh{}^1$, $(u,u)\otimes v$ is sent to $\varh_{\theta'_o}(u,v)$ in the component $\kk_{\theta'_o}$ of $\Gamma(I_1\cap I_2,\kk)$ and to $0$ in the component $\kk_{\theta_o}$. The second assertion of \replacedo{Proposition}{Corollary} \ref{prop:Hhc} follows.
\end{proof}

\subsection{A Hermitian pairing on the cohomology}
We continue to assume that~$c$ satisfies Assumptions \eqref{assumpt:aa} and \eqref{assumpt:b} of \S\ref{subsec:sesqcohom}. Let us first make explicit the middle morphism in the exact sequence of Lemma \ref{lem:cohomS1}.

\begin{lemme}\label{lem:imcan}
Through the natural identifications $H^0(S^1,\cL/\cL_{\leq c})\isom L_{\theta'_o}$ and $H^1(S^1,\cL_{\leq c})\isom L_{\theta_o}$, the natural morphism $\can:H^0(S^1,\cL/\cL_{\leq c})\to H^1(S^1,\cL_{\leq c})$ is identified with $\addedo{\can_{\theta'_o}={}}S_1^{-1}-S_2^{-1}:L_{\theta'_o}\to L_{\theta_o}$.
\end{lemme}

\begin{proof}
\addedo{Applying the snake lemma, }we obtain the exact sequence of Lemma \ref{lem:cohomS1} from the following exact sequence of (vertical) Mayer-Vietoris complexes\addedo{ which computes the cohomology of the corresponding sheaves, where the vertical arrows are the differences $\rho_1-\rho_2$ of the natural restriction morphisms from $I_1$ or $I_2$ to $I_1\cap I_2$}:
\[
\xymatrix@C=.2cm{
0\ar[r]&0\ar[r]\ar[d]&\Gamma(I_1,\cL)\oplus\Gamma(I_2,\cL)\ar[r]\ar[d]&\Gamma(I_1,\cL/\cL_{\leq c})\oplus\Gamma(I_2,\cL/\cL_{\leq c})\ar[r]\ar[d]&0\\
0\ar[r]&\Gamma(I_1\cap I_2,\cL_{\leq c})\ar[r]&\Gamma(I_1\cap I_2,\cL)\ar[r]&\Gamma(I_1\cap I_2,\cL/\cL_{\leq c})\ar[r]&0
}
\]
Given $u\in \Gamma(I_1,\cL/\cL_{\leq c})\simeq L_{\theta'_o}$, its lift in $\Gamma(I_1,\cL)\simeq L_{\theta_o}$ is $S_1^{-1}u$. Then $(u,u)\in L_{\theta'_o}\oplus L_{\theta'_o}$ is lifted as $(S_1^{-1}u,S_2^{-1}u)$, and its image in $\Gamma(I_1\cap I_2,\cL)\simeq L_{\theta'_o}\oplus L_{\theta_o}$ is $(\addedo{0},(S_1^{-1}-S_2^{-1})u)$.
\end{proof}

Let $F_c=\im\can\subset H^1(S^1,\cL_{\leq c})$. According to \replacedo{Proposition}{Corollary} \ref{prop:Hhc}, Lemma \ref{lem:imcan} and Remark \ref{rem:imcan}, the sesquilinear pairing $\varh_c$, as defined by \ref{eq:Hhc}, induces a sesquilinear pairing
\begin{equation}\label{eq:hh}
\hh_c: F_c\otimes\ov F_c\to\kk
\end{equation}
by setting $\hh_c(u,\ov v)\defin \varh_c(x',\ov v)$ for some (or any) $x'\in H^{\addedo{0}}(S^1,\cL/\cL_{\leq c})$ such that $\can x'=u$. Moreover, if $\varh$ is nondegenerate (\resp $\iota$-skew-Hermitian), then $\hh_c$ is nondegenerate (\resp Hermitian) on $F_c$. From Corollary \ref{cor:posdefprime} we get:

\begin{corollaire}\label{cor:posdef}
Assume $\kk=\CC$ and the involution is the conjugation, or $\kk=\RR$ or~$\QQ$ and the involution is the identity. If the invertible matrix $\Sigma_{\theta_o}$ is such that the Hermitian matrix $\Sigma_{\theta_o}+{}^t\ov\Sigma_{\theta_o}$ is positive semi-definite, then $\hh_c$ is positive definite on~$F\addedo{_c}$.\qed
\end{corollaire}

\section{Minimal constructible sheaves on $\PP^1$ with Stokes structure at infinity}\label{sec:pervStokesP1}

In this section, we set $X=\Afu\cup S^1_\infty$ (with respect to the setting of \S\ref{subsec:LaplaceD}, the coordinate on $\Afu$ should be denoted by $\tau$). The inclusions are denoted by $j_\infty:\Afu\hto X$ and $i_\infty:S^1_\infty\hto X$ and the projection $X\to\PP^1$ by $\varpi$.

Let $\cF$ be a constructible sheaf on $\Afu$ with finite singularity set $\Sigma$. Its extension $j_{\infty,*}\cF$ is a sheaf on $X$ whose restriction to $X\moins\Sigma$ (hence also to $S^1_\infty$) is a local system.

\begin{definition}\label{def:consStokes}
By a Stokes structure at infinity $(\cF,\cF_\bbullet)$ on $\cF$ we will mean the data of a family of subsheaves $\cF_{\leq c}$ ($c\in\CC$) of $j_{\infty,*}\cF$ such that
\begin{enumerate}
\item
for each $c\in\CC$, $j_\infty^{-1}\cF_{\leq c}=\cF$,
\item
the family $\cL_\bbullet\defin i_\infty^{-1}\cF_\bbullet$ of subsheaves of $\cL\defin i_\infty^{-1}j_{\infty,*}\cF$ is a Stokes filtration of the local system $\cL$ as in \S\ref{subsec:Stokesfil}.
\end{enumerate}
We also say that $(\cF,\cF_\bbullet)$ is a Stokes-filtered constructible sheaf on $\Afu$, \replacedo{meaning}{understating} that the Stokes filtration is at infinity.
\end{definition}

Recall that a sheaf on $X$ can be defined through its restrictions to $\Afu$ and $S^1_\infty$ and gluing data. In such a way, the inclusion $\cL_{<c}\hto\cL_{\leq c}$ determines a unique subsheaf $\cF_{<c}$ of $\cF_{\leq c}$ whose restriction to $\Afu$ is $\cF$ and that to $S^1_\infty$ is $\cL_{<c}$.

We define in this way a category, for which the morphisms are morphisms of sheaves $\lambda:\cF\to\cF'$ such that $i_\infty^{-1}j_{\infty,*}\lambda$ is a morphism of Stokes-filtered local systems. As a consequence of Proposition \ref{prop:catStokesfiltabelian}, this category is abelian.

\begin{lemme}\label{lem:iinfty!}
Let $(\cF,\cF_\bbullet)$ be a Stokes-filtered constructible sheaf. Then for each $c\in\CC$ the complex $i_\infty^!\cF_{\leq c}$ has cohomology in degree $1$ at most and $\cH^1(i_\infty^!\cF_{\leq c})=\cL/\cL_{\leq c}$. A similar assertion holds for $\cF_{<c}$.
\end{lemme}

\begin{proof}
We have $\bR j_{\infty,*}\cF=j_{\infty,*}\cF$ and the distinguished triangle
\[
i_\infty^{-1}\cF_{\leq c}\to i_\infty^{-1}j_{\infty,*}\cF\to i_\infty^!\cF_{\leq c}[1]\To{+1}
\]
reduces to the exact sequence
\[
0\to\cL_{\leq c}\to\cL\to\cL/\cL_{\leq c}\to0,
\]
showing that $i_\infty^!\cF_{\leq c}[1]$ has cohomology in degree $0$ at most, this cohomology being equal to $\cL/\cL_{\leq c}$.
\end{proof}

In the following, we only consider constructible sheaves $\cF$ for which the singularity set $\Sigma$ is reduced to $\{\tau=0\}$. We denote by $j_0$ the inclusion $\Afu\moins\{0\}\hto\Afu$. We say that $\cF$ or $(\cF,\cF_\bbullet)$ is \emph{minimal} (or middle extension) if $\cF=j_{0,*}j_0^{-1}\cF$ (by our assumption, $j_0^{-1}\cF$ is a locally constant sheaf on $\Afu\moins\{0\}$).

\addedo{It should be noted that minimality at $\tau=0$, as considered here, is a~priori not related to the minimality property of the Stokes filtration at $\tau=\infty$, as in Definition \ref{def:critminext} and Lemma \ref{lem:critminext}. At the level of $\Clt$ and $\Cltau$-modules considered in \S\ref{subsec:LaplaceD}, the latter is related to the property that~$M$ is a minimal extension at its singularities at finite distance, while the former is related to the property that $N$ is a minimal extension at $\tau=0$.}

\begin{lemme}\label{lem:equiv}
Given a Stokes-filtered local system $(\cL,\cL_\bbullet)$ on $S^1_\infty$, there exists a unique (up to unique isomorphism) minimal Stokes-filtered constructible sheaf $(\cF,\cF_\bbullet)$ such that $(\cL,\cL_\bbullet)=(i_\infty^{-1}j_{\infty,*}\cF,i_\infty^{-1}\cF_\bbullet)$.
\end{lemme}

\begin{proof}
For the existence, let us denote by $\pi:\Afu\moins\{0\}\to S^1_\infty$ the projection (quotient by $\RR_+^*$). Set $\cF^*=\pi^{-1}\cL$ and $\cF=j_{0*}\cF^*$. Then $i_\infty^{-1}j_{\infty,*}\cF=\cL$ and the inclusion $\cL_\bbullet\hto\cL$ determines a unique subsheaf $\cF_\bbullet$ of $j_{\infty,*}\cF$ such that $i_\infty^{-1}\cF_\bbullet=\cL_\bbullet$ and $j_\infty^{-1}\cF_\bbullet=\cF$.

Given two such objects $(\cF,\cF_\bbullet)$ and $(\cF',\cF'_\bbullet)$, the identity morphism $\cL=\cL$ extends in a unique way as an isomorphism $\cF^*\simeq\cF^{\prime*}$ and then as an isomorphism $j_{0,*}\cF^*\simeq j_{0,*}\cF^{\prime*}$, proving the uniqueness. The uniqueness of the isomorphism inducing the identity $\cL=\cL$ is also clear.
\end{proof}

Concerning the compatibility, through the equivalence of Lemma \ref{lem:equiv}, of the operations considered in \S\ref{sec:operations}, let us notice that compatibility with \deletedo{twist and} involution~$\iota$ is straightforward. For the duality, we will check it now. Before doing so, notice that, for each $c\in\CC$, we have a natural morphism $j_{\infty,!}\cF\to\cF_{\leq c}$ which induces, after applying $i_\infty^!$ and after Lemma \ref{lem:iinfty!}, the surjection $\cL\to\cL/\cL_{\leq c}$.

Recall that the dualizing complex on $X$ is $j_{\infty,!}\kk_{\Afu}[2]$. If $\cG$ is a sheaf on $X$, we denote by $\DD(\cG)=\bR\cHom(\cG,j_{\infty,!}\kk_{\Afu}[2])$ its Poincaré-Verdier dual, and by $\DD'(\cG)=\bR\cHom(\cG,j_{\infty,!}\kk_{\Afu})$ the shifted complex. As in \S\ref{subsec:duality}, we say that $\DD'(\cG)$ is a sheaf to mean that the complex $\DD'(\cG)$ has cohomology in degree $0$ at most. In such a case, we identify $\DD'(\cG)$ with the sheaf $\cHom(\cG,j_{\infty,!}\kk_{\Afu})$.

Note that, on $\Afu$, if $\cF$ is a minimal constructible sheaf as above, $\DD'\cF$ is a sheaf, which is constructible with singularity at $0$ at most, and is minimal. We will denote it by $\cF^\vee$.

\begin{proposition}[Duality]\label{prop:duality}
The category of minimal Stokes-filtered constructible sheaves is stable by Poincaré-Verdier duality (up to a shift by $2$). More precisely, for each object $(\cF,\cF_\bbullet)$,
\begin{enumerate}
\item\label{prop:duality1}
$\DD'(j_{\infty,!}\cF)$, $\DD'(\cF_{\leq c})$ and $\DD'(\cF_{<c})$ are sheaves for each $c\in\CC$,
\item\label{prop:duality2}
the dual $\DD'(\cF_{\leq c})\to\DD'(j_{\infty,!}\cF)=j_{\infty,*}\cF^\vee$ of the natural morphism $j_{\infty,!}\cF\to\cF_{\leq c}$ is injective for each $c\in\CC$, and similarly for $\cF_{<c}$,
\item\label{prop:duality3}
the family $(\cF^\vee_{\leq c})_{c\in\CC}$ of subsheaves of $j_{\infty,*}\cF^\vee$ defined by $\cF^\vee_{\leq c}=\DD'(\cF_{<-c})$ is a Stokes filtration at infinity of $\cF^\vee$, for which $\cF^\vee_{<c}=\DD'(\cF_{\leq -c})$.
\end{enumerate}
\end{proposition}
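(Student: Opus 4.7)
The plan is to reduce duality on~$X$ to the already-established duality over~$\Afu$ (giving $\cF^\vee$) and over $S^1_\infty$ (Lemma \ref{lem:dualS1}), by dualizing the fundamental gluing triangle
\[
j_{\infty,!}\cF \to \cF_{\leq c} \to i_{\infty,*}\cL_{\leq c} \To{+1}
\]
obtained by applying the standard triangle $j_{\infty,!}j_\infty^{-1}\to\id\to i_{\infty,*}i_\infty^{-1}\To{+1}$ to $\cG=\cF_{\leq c}$, and similarly for $\cF_{<c}$. Dualizing this triangle and reading off its cohomology sheaves will yield all three statements in parallel.

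First I would compute the duals of the two outer terms. Open-immersion adjunction gives
\[
\DD'(j_{\infty,!}\cF) = \bR j_{\infty,*}\bR\cHom_{\Afu}(\cF,\kk_{\Afu}) = \bR j_{\infty,*}\cF^\vee;
\]
since $\cF^\vee$ is locally constant in a neighbourhood of $S^1_\infty$ (the only singularity sits at $0\in\Afu$), and small sectors at infinity are contractible, $\bR^q j_{\infty,*}\cF^\vee=0$ for $q>0$, so $\DD'(j_{\infty,!}\cF)=j_{\infty,*}\cF^\vee$ is a sheaf. Closed-immersion adjunction gives
\[
\DD'(i_{\infty,*}\cL_{\leq c}) = i_{\infty,*}\bR\cHom(\cL_{\leq c},i_\infty^!j_{\infty,!}\kk_{\Afu}),
\]
and I would compute $i_\infty^!j_{\infty,!}\kk_{\Afu}=\kk_{S^1_\infty}[-1]$ by applying $i_\infty^!$ to the short exact sequence $0\to j_{\infty,!}\kk_{\Afu}\to\kk_X\to i_{\infty,*}\kk_{S^1_\infty}\to 0$, using $\bR j_{\infty,*}\kk_{\Afu}=\kk_X$. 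Combined with Lemma \ref{lem:dualS1}, this yields $\DD'(i_{\infty,*}\cL_{\leq c})=i_{\infty,*}\DD'(\cL_{\leq c})[-1]$, concentrated in degree~$1$.

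The long exact sequence of cohomology sheaves attached to the dual triangle then reads
\[
0 \to \cH^0\DD'(\cF_{\leq c}) \to j_{\infty,*}\cF^\vee \to i_{\infty,*}\DD'(\cL_{\leq c}) \to \cH^1\DD'(\cF_{\leq c}) \to 0.
\]
Dualizing the exact sequence $0\to\cL_{\leq c}\to\cL\to\cL/\cL_{\leq c}\to 0$ and invoking Lemma \ref{lem:dualS1} identifies $\DD'(\cL_{\leq c})=\cL^\vee/(\cL^\vee)_{<-c}$; the middle map above is then the natural composition $j_{\infty,*}\cF^\vee\to i_{\infty,*}\cL^\vee\to i_{\infty,*}(\cL^\vee/(\cL^\vee)_{<-c})$, hence surjective. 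This forces $\cH^1\DD'(\cF_{\leq c})=0$, proving~(1) for $\cF_{\leq c}$ and, through the identification $\DD'(j_{\infty,!}\cF)=j_{\infty,*}\cF^\vee$, statement~(2). Moreover, $\DD'(\cF_{\leq c})$ is exactly the subsheaf of $j_{\infty,*}\cF^\vee$ restricting to $\cF^\vee$ on $\Afu$ and to $(\cL^\vee)_{<-c}$ on $S^1_\infty$, which is the required description of $\cF^\vee_{<-c}$. Running the same argument with $\cF_{<c}$ in place of $\cF_{\leq c}$ (the analogue of Lemma \ref{lem:dualS1} for $\cL_{<c}$ follows from the same proof, or from the exact sequence $0\to\gr_c\cL\to\cL/\cL_{<c}\to\cL/\cL_{\leq c}\to 0$) gives $\DD'(\cF_{<c})=\cF^\vee_{\leq -c}$. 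Statement~(3) is then immediate, since the family so defined has the correct restrictions: $\cF^\vee$ everywhere on $\Afu$, and the dual Stokes filtration $(\cL^\vee)_\bbullet=((\cL_{<-c})^\perp)_c$ of \S\ref{subsec:duality} on $S^1_\infty$.

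The main obstacle is the identification of the connecting map as a surjection, which rests on the duality formula $\DD'(\cL_{\leq c})=\cL^\vee/(\cL^\vee)_{<-c}$ and on the local computation $i_\infty^!j_{\infty,!}\kk_{\Afu}=\kk_{S^1_\infty}[-1]$; once these are in hand, the three assertions follow together.
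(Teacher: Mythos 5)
Your proposal is correct and follows essentially the same route as the paper: both reduce the statement to the circle at infinity via the recollement adjunctions ($i_\infty^!j_{\infty,!}\kk_{\Afu}=\kk_{S^1_\infty}[-1]$, $\DD'(j_{\infty,!}\cF)=j_{\infty,*}\cF^\vee$) and then invoke Lemma \ref{lem:dualS1}; you merely organize the computation through the long exact sequence of the dualized gluing triangle, whereas the paper checks $i_\infty^{-1}\DD'(\cF_{\leq c})$ and $i_\infty^!\DD'(\cF_{\leq c})$ directly using the adjunction \eqref{eq:DDi}, biduality and Lemma \ref{lem:iinfty!}. The one step you should make explicit is the identification of the connecting map $j_{\infty,*}\cF^\vee\to i_{\infty,*}\DD'(\cL_{\leq c})$ with the composite through $i_{\infty,*}\cL^\vee$; this follows from the standard fact that $\DD'$ exchanges the triangle $j_{\infty,!}j_\infty^{-1}\to\id\to i_{\infty,*}i_\infty^{-1}$ with $i_{\infty,*}i_\infty^!\to\id\to\bR j_{\infty,*}j_\infty^{-1}$ applied to $\DD'(\cF_{\leq c})$.
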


\begin{proof}
\addedo{On $\Afu$, the first assertion is equivalent to saying that $\DD'\cF$ is a sheaf, and this has been noticed before the proposition. It is therefore enough to prove that $i_\infty^{-1}\DD'(j_{\infty,!}\cF)$, etc. are sheaves on $S^1_\infty$, because $i_\infty^{-1}$ commutes with taking cohomology sheaves.}

\replacedo{It}{Its} is classical that $\DD j_{\infty,!}\cF=\bR j_{\infty,*}\DD\cF$, and hence $\DD' j_{\infty,!}\cF=\bR j_{\infty,*}\DD'\cF=j_{\infty,*}\cF^\vee$, \addedo{so $i_\infty^{-1}\DD'(j_{\infty,!}\cF)=i_\infty^{-1}j_{\infty,*}\cF^\vee$ is a sheaf.}

For each $c\in\CC$, we have (\cf \cite[Prop\ptbl3.1.13]{K-S90})
\begin{equation}\label{eq:DDi}
\begin{split}
i_\infty^!\DD'(\cF_{\leq c})&=\bR\cHom(i_\infty^{-1}\cF_{\leq c},i_\infty^!j_{\infty,!}\kk_{\Afu})\\
&=\bR\cHom(i_\infty^{-1}\cF_{\leq c},\kk_{S^1_\infty}[-1])=:\DD'(i_\infty^{-1}\cF_{\leq c})[-1]
\end{split}
\end{equation}
and by biduality, we have $\DD'(i_\infty^!\cF_{\leq c}[1])=i_\infty^{-1}\DD'(\cF_{\leq c})$. Lemmas \ref{lem:dualS1} and \ref{lem:iinfty!} show that $\DD'(i_\infty^!\cF_{\leq c}[1])=\DD'(\cL/\cL_{\leq c})$ and $\DD'(i_\infty^{-1}\cF_{\leq c})=\DD'(\cL_{\leq c})$ \replacedo{are sheaves}{have cohomology in degree~$0$ at most}. \addedo{The first property implies then that $i_\infty^{-1}\DD'(\cF_{\leq c})$ is a sheaf.}

Arguing similarly for $\cF_{<c}$, we obtain \replacedo{that $i_\infty^{-1}\DD'(\cF_{<c})$ is a sheaf}{the remaining part of \ref{prop:duality}\eqref{prop:duality1}}.

The assertion \ref{prop:duality}\eqref{prop:duality2} needs only be checked \addedo{on} $S^1_\infty$, and by duality, according to \eqref{eq:DDi}, it amounts to proving that $\cH^1(i_\infty^!j_{\infty,!}\cF)\to\cH^1(i_\infty^!\cF_{\leq c})$ is onto. This follows from Lemma \ref{lem:iinfty!}\addedo{ and the paragraph after the proof of Lemma \ref{lem:equiv}}.

Now, \ref{prop:duality}\eqref{prop:duality3} follows from Lemma \ref{lem:dualS1}.
\end{proof}

Let~$\varh$ be a $\iota$-sesquilinear form on $\cF$, that is, a pairing $\cF\otimes\iota^{-1}\ov\cF\to\kk_{\Afu}$. It extends in a unique way as a nondegenerate $\iota$-sesquilinear pairing $\varh:j_{\infty,!}\cF\otimes \iota^{-1}\ov{j_{\infty,*}\cF}\to j_{\infty,!}\kk_{\addedo{\Afu}}$ and defines a morphism $j_{\infty,!}\cF\to\DD'(j_{\infty,*}\ov\cF)$. Arguing as for \eqref{eq:DDi}, it induces a morphism $i_\infty^!j_{\infty,!}\cF[1]\simeq\cL\to\DD'(i_\infty^{-1}j_{\infty,*}\ov\cF)=\ov\cL{}^\vee$, and thus a sesquilinear pairing $\varh_\infty:\cL\otimes\ov\cL\to\kk_{S^1_\infty}$. \replacedo{Arguing as in Proposition \ref{prop:duality}, one checks that if $\varh$ is nondegenerate, \ie $\cF\to\DD'(\ov\cF)$ is an isomorphism, then so is $\varh_\infty$. Conversely, arguing as in Lemma \ref{lem:equiv}, one reconstructs $\varh$ from $\varh_\infty$ and obtains the non-degeneracy of~$\varh$ from that of $\varh_\infty$}{Arguing as in Lemma \ref{lem:equiv} one gets that $\varh$ is nondegenerate, \ie $\cF\to\DD'(\ov\cF)$ is an isomorphism, if and only if $\varh_\infty$ is so}.

Let us now express in terms of $(\cF,\cF_\bbullet)$ the compatibility of $\varh_\infty$ with the Stokes filtration. Extend $\varh$ as a pairing $j_{\infty,*}\varh:j_{\infty,*}\cF\otimes \iota^{-1}\ov{j_{\infty,*}\cF}\to j_{\infty,*}\kk_X$. This pairing induces for each $c\in\CC$ a pairing $\varh_c:\cF_{<c}\otimes\iota^{-1}(\ov\cF_{\leq c})\to j_{\infty,*}\kk_X$.

\begin{lemme}\label{lem:varhs}
The pairing $\varh_\infty$ is compatible with the Stokes filtration if and only if, for each $c\in\CC$, the pairing $\varh_c$ takes values in $j_{\infty,!}\kk_X$. When such is the case, the induced pairing $i_\infty^!\cF_{<c}[1]\otimes i_\infty^{-1}\iota^{-1}(\ov\cF_{\leq c})\to\kk_{S^1_\infty}$ is identified with $\varh_{\infty,c}:(\cL/\cL_{<c})\otimes\nobreak\iota^{-1}(\ov\cL_{\leq c})\to\kk_{S^1_\infty}$.
\end{lemme}

\begin{proof}
We first note that the pairing $i_\infty^{-1}j_{\infty,*}\varh:\cL\otimes\ov\cL\to\kk_{S^1_\infty}$ coincides with $\varh_\infty$ through the natural isomorphisms $i_\infty^{-1}j_{\infty,*}\cF\to i_\infty^!j_{\infty,!}\cF[1]$ and similarly for $\kk_X$. The condition on $\varh_c$ is then equivalent to the vanishing of $\varh_\infty$ restricted to $\cL_{<c}\otimes\iota^{-1}(\ov\cL_{\leq c})$ for each $c$, and this is equivalent to the compatibility with the Stokes filtration. The second part of the lemma follows from \eqref{eq:DDi} and Lemma \ref{lem:dualS1}.
\end{proof}

When the condition of the lemma is fulfilled, we say that $\varh$ is a $\iota$-sesquilinear form on $(\cF,\cF_\bbullet)$. We say that it is nondegenerate if it is nondegenerate on $\cF$.

\begin{proposition}[Sesquilinear pairing on cohomology]\label{prop:pairing}
Let~$\varh$ be a nondegenerate $\iota$\nobreakdash-ses\-qui\-li\-near form on $(\cF,\cF_\bbullet)$. Let us choose $c\in\CC$ satisfying the assumptions \eqref{assumpt:aa} and \eqref{assumpt:b} of \S\ref{subsec:sesqcohom}, so that in particular $\cF_{\leq c}=\cF_{<c}$ and $\cF^\vee_{\leq -c}=\DD'(\cF_{\leq c})$. Then the following properties hold:
\begin{enumerate}
\item\label{prop:pairing1}
$H^k(X,\cF_{\leq c})=0$ for $k\neq1$ and, via the natural restriction morphism, $H^1(X,\cF_{\leq c})$ is identified with $F_c\defin\im\can\subset H^1(S^1_\infty,\cL_{\leq c})$ (\cf Lemma \ref{lem:imcan}).
\item\label{prop:pairing2}
The sesquilinear pairing $\varh_c:\cF_{<c}\otimes_{\kk}\iota^{-1}(\ov\cF_{\leq c})\to j_{\infty,!}\kk_{\Afu}$ induces a sesquilinear pairing $\wh \varh_c:H^1(X,\cF_{\leq c})\otimes_{\kk} \ov{H^1(X,\cF_{\leq c})}\to\kk$ which is identified with $\hh_c$, defined by \eqref{eq:hh}, via the identification of \eqref{prop:pairing1}.
\end{enumerate}
\end{proposition}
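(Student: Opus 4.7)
The plan is to analyse $H^*(X,\cF_{\leq c})$ via the distinguished triangle
\[
j_{\infty,!}\cF\to\cF_{\leq c}\to i_{\infty,*}\cL_{\leq c}\To{+1}
\]
arising from the open/closed decomposition $X=\Afu\cup S^1_\infty$ and the identifications $j_\infty^{-1}\cF_{\leq c}=\cF$, $i_\infty^{-1}\cF_{\leq c}=\cL_{\leq c}$. Under Assumptions~\eqref{assumpt:aa} and \eqref{assumpt:b}, Lemma~\ref{lem:cohomS1} yields $H^k(S^1_\infty,\cL_{\leq c})=0$ for $k\neq1$ and $H^1(S^1_\infty,\cL_{\leq c})\simeq L_{\theta_o}$. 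On the other hand, properness of $\varpi\colon X\to\PP^1$ gives $H^k(X,j_{\infty,!}\cF)=H^k_c(\Afu,\cF)$; and the minimality of $\cF$ at~$0$, expressed through the short exact sequence $0\to j_{0,!}\cF^*\to\cF\to i_{0,*}\cF_0\to0$ (with $\cF^*=j_0^{-1}\cF$ and $\cF_0=(\cF^*)^{T_0}$) combined with Poincar�--Lefschetz duality for the local system $\cF^*$ on $\Afu\moins\{0\}$, gives $H^k_c(\Afu,\cF)=0$ for $k\neq2$ and $H^2_c(\Afu,\cF)\simeq(\cF^*)_{T_0}$, the coinvariants of the monodromy $T_0$ around~$0$.

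For part~\eqref{prop:pairing1}, the long exact sequence of the triangle collapses to
\[
0\to H^1(X,\cF_{\leq c})\to L_{\theta_o}\To{\delta}(\cF^*)_{T_0}\to H^2(X,\cF_{\leq c})\to 0,
\]
with $H^k(X,\cF_{\leq c})=0$ for $k\notin\{1,2\}$. The proof of Lemma~\ref{lem:equiv} exhibits $\cF^*=\pi^{-1}\cL$, whence a canonical identification $(\cF^*)_{T_0}\simeq L_{\theta_o}/\im(\id-T_{\theta_o})$; tracking the boundary morphism through the Mayer--Vietoris/\v Cech description underlying Lemma~\ref{lem:cohomS1} shows that $\delta$ is precisely the natural projection. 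Hence $H^1(X,\cF_{\leq c})=\im(\id-T_{\theta_o})\subset L_{\theta_o}$ and $H^2(X,\cF_{\leq c})=0$. A direct matrix computation using $T_{\theta_o}=S_2^{-1}S_1$ then gives $\im(\id-T_{\theta_o})=\im(S_1^{-1}-S_2^{-1})=F_c$ (compare Lemma~\ref{lem:imcan}), which proves~\eqref{prop:pairing1}.

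For part~\eqref{prop:pairing2}, the pairing $\varh_c$ induces by functoriality of cohomology a bilinear pairing
\[
H^1(X,\cF_{<c})\otimes_\kk H^1(X,\iota^{-1}\ov\cF_{\leq c})\to H^2(X,j_{\infty,!}\kk_\Afu)=H^2_c(\Afu,\kk)\simeq\kk.
\]
Since $\iota\colon X\to X$ is a homeomorphism isotopic to the identity, pullback by~$\iota$ provides a natural isomorphism $H^1(X,\iota^{-1}\ov\cF_{\leq c})\simeq\ov{H^1(X,\cF_{\leq c})}$, and part~\eqref{prop:pairing1} then converts the pairing above into the required $\wh\varh_c\colon F_c\otimes\ov F_c\to\kk$. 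To match $\wh\varh_c$ with $\hh_c$, apply naturality of cohomological pairings to the restriction map to $S^1_\infty$: by Lemma~\ref{lem:varhs}, $\varh_c$ restricts to the pairing $\varh_{\infty,c}$ of Proposition~\ref{prop:Hhc}, while the restriction $H^1(X,\cF_{\leq c})\to H^1(S^1_\infty,\cL_{\leq c})$ is, by part~\eqref{prop:pairing1}, the inclusion $F_c\hto L_{\theta_o}$; since $\hh_c$ is built in \eqref{eq:hh} from $\varh_{\infty,c}$ in exactly the same formal manner that $\wh\varh_c$ is now built from $\varh_c$, the identification follows.

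The main obstacle will be identifying the connecting morphism~$\delta$ with the canonical projection $L_{\theta_o}\to L_{\theta_o}/\im(\id-T_{\theta_o})$: this requires reconciling the \v Cech computation of $H^1(S^1_\infty,\cL_{\leq c})$ in Lemma~\ref{lem:cohomS1} with the monodromy description of $H^2_c(\Afu,\cF)$ via the radial identification $\cF^*=\pi^{-1}\cL$. Once this is settled, the remaining content of the proposition reduces to formal naturality of cohomological pairings.
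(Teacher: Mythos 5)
Your argument for part (\ref{prop:pairing1}) is correct but takes a genuinely different route. The paper passes to the real blow-up $e:Y\to X$ of the origin, computes $H^*(Y,\cG)$ for $\cG=j_*\cF_{\leq c}^*$ with the covering $U_1\cup U_2$ (Lemma \ref{lem:annul}), and extracts $H^1(X,\cF_{\leq c})$ from the exact sequence \eqref{eq:HG} as the kernel of the restriction to $S^1_0$. You instead use the triangle $j_{\infty,!}\cF\to\cF_{\leq c}\to i_{\infty,*}\cL_{\leq c}\To{+1}$ together with the vanishing $H^k_c(\Afu,\cF)=0$ for $k\neq2$ coming from minimality at $0$; this trades the blow-up computation for a short duality argument on $\Afu$. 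Moreover, the ``main obstacle'' you flag can be bypassed: comparing with the same triangle for $j_{\infty,*}\cF$ (whose connecting map $H^1(S^1_\infty,\cL)\to H^2_c(\Afu,\cF)$ is an isomorphism because $H^k(X,j_{\infty,*}\cF)=0$ for $k\geq1$), naturality shows that your $\delta$ is the composite $H^1(S^1_\infty,\cL_{\leq c})\to H^1(S^1_\infty,\cL)\isom H^2_c(\Afu,\cF)$, so $\ker\delta=\im\can=F_c$ follows directly from the exact sequence of Lemma \ref{lem:cohomS1}, with no need to identify $(\cF^*)_{T_0}$ with a quotient of $L_{\theta_o}$ nor to check $\im(\id-T_{\theta_o})=\im(S_1^{-1}-S_2^{-1})$ (which is nevertheless true, since $S_1^{-1}-S_2^{-1}=S_2^{-1}(S_2-S_1)S_1^{-1}$).

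For part (\ref{prop:pairing2}) you follow the paper's idea, but the decisive step is stated too loosely. Applying $i_\infty^{-1}$ to both factors would land the induced pairing in $H^2(S^1_\infty,\kk)=0$, so ``naturality applied to the restriction to $S^1_\infty$'' cannot be taken literally. What makes the comparison work is the mixed-variance diagram of the paper's proof: on the first factor one uses the adjunction $\bR i_{\infty,*}i_\infty^!\to\id$, whose $H^1$ is precisely $\can:H^0(S^1_\infty,\cL/\cL_{\leq c})\to H^1(X,\cF_{\leq c})$ (here Lemma \ref{lem:iinfty!} identifies $i_\infty^!\cF_{\leq c}[1]$ with $\cL/\cL_{\leq c}$), on the second factor one restricts by $i_\infty^{-1}$, and on the target one uses $H^1(S^1_\infty,\kk)\isom H^2(X,j_{\infty,!}\kk_{\Afu})$. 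This yields $\wh\varh_c(\can x',\ov v)=\varh_{\infty,c}(x',\ov v)$, which is exactly the defining formula \eqref{eq:hh} for $\hh_c$ since $\can$ is onto $F_c$. This is a repairable imprecision rather than a wrong turn, but as written the mechanism that actually produces the identification is not supplied.
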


In \eqref{prop:pairing2}, we use the canonical isomorphism $H^1(X,\ov\cF_{\leq c})\simeq H^1(X,\iota^{-1}(\ov\cF_{\leq c}))$.

\begin{proof}[Proof of Proposition \ref{prop:pairing}\eqref{prop:pairing1}]
Let us fix some notation. We denote by $e:Y\to X$ the real blow-up of the origin in $X$ and we set $S^1_0=e^{-1}(0)$, so that $Y=S^1\times[0,\infty]$. We consider the covering $Y=U_1\cup U_2$, with $U_k=I_k\times\nobreak[0,\infty]$, $k=1,2$, where $I_1,I_2$ are as in \S\ref{subsec:Stokesdata}.

We set $X^*=X\moins\{0\}=Y^*=Y\moins S^1_0$ and we denote by $j$ the inclusion $X^*=Y^*\hto Y$. We also set $\cF_{\leq c}^*=\addedo{\cF_{\leq c|X^*}}$ and we define $\cL^*$ as the pull-back of $\cL$ by the projection $Y^*\to S^1_\infty$.

Set $\cG=j_*\cF_{\leq c}^*$ and $\cG'=j_!\cF_{\leq c}^*$. We have $\cG=\bR j_*\cF_{\leq c}^*$ and $\cF_{\leq c}=e_*\cG$ (distinct from $\bR e_*\cG$ if the monodromy of~$\cL$ admits $1$ as an eigenvalue). Let us denote by $j_1:U_1\hto Y$ the open inclusion, and by $\cG_1$ the complex $\bR j_{1,*}(\cG_{|U_1})$. We similarly use the notation $\cG_2$ and $\cG_{12}$. It will be convenient to denote by $\cL_Y$ the pull-back of $\cL$ by the projection $Y\to S^1_\infty$.

\begin{lemme}\label{lem:annul}
Under Assumptions \eqref{assumpt:aa} and \eqref{assumpt:b} of \S\ref{subsec:sesqcohom}, the complex $\cG_1$ is equal to the sheaf $j_{1,*}(\cG_{|U_1})$. Moreover, $H^k(U_1,\cG\addedo{_{|U_1}})=H^k(Y,\cG_1)=0$ for any $k$. A similar assertion holds for $\cG_2$, and for $\cG_{12}$ if $k\neq0$. Moreover, $H^k(Y,\cG)=0$ if $k\neq\addedo{1}$ and $H^1(Y,\cG)=H^0(U_1\cap\nobreak U_2,\cG)=H^0(Y,\cG_{12})\simeq L_{\theta_o}$.
\end{lemme}

\begin{proof}
\deletedn{Whenever $\cG$ is a local system in the neighbourhood of a point in $\partial I_1\times[0,\infty]$, the first assertion is easy at this point. By using the local splitting of the Stokes filtration, it then remains to check the assertion at $(\theta'_o+\epsilon,\infty)$, where $\cG$ is the extension by $0$ to $(\theta'_o-\epsilon,\theta'_o+\nobreak\epsilon)\times\{\infty\}$ of a local system on $(\theta'_o-\epsilon,\theta'_o+\nobreak\epsilon)\times(\eta,\infty)$, with $\eta$ large. }By assumption, $I_1$ is the interval $(\theta_o-\epsilon,\theta'_o+\epsilon)$. We need to check that $\cG_1$ is a sheaf along $\partial I_1\times[0,\infty]$. Note that $\cG$ is a local system on $Y\moins S^1_\infty$, so the assertion is clear along $\partial I_1\times[0,\infty)$. Now the assertion is local near the points $(\theta_o-\epsilon,\infty)$ and $(\theta'_o+\epsilon,\infty)$, and we can use a local splitting of the Stokes filtration to reduce to the case where $\cG$ is a local system in the neighbourhood of $(\theta_o-\epsilon,\infty)$, which is already treated, or $\cG$ is the extension by zero of a (constant) local system on an open set like $(\theta'_o-\epsilon,\theta'_o+\epsilon)\times(\eta,\infty)$, with $\eta\gg0$, via the inclusion $(\theta'_o-\epsilon,\theta'_o+\epsilon)\times(\eta,\infty)\hto(\theta'_o-\epsilon,\theta'_o+\epsilon)\times(\eta,\infty]$ (by our assumption on \replacedo{$\theta_o$ and $c$}{$c$ and~$\theta_o$}, this \deletedo{can} occur\addedo{s} only at~$\theta'_o$).

We are thus reduced to showing, since the local system is constant on this neighbourhood, and retracting $(\theta'_o-\epsilon,\theta'_o+\epsilon)\times(\eta,\infty]$ to $\{\theta'_o\}\times(\eta,\infty]$, that $H^1_{\{\infty\}}((\eta,\infty],\CC)=0$, which is clear.

Let us show
\begin{equation}\label{eq:annulU1}
H^k(U_1,\cG\addedo{_{|U_1}})=0\quad\forall k.
\end{equation}
According to \eqref{eq:splittings}, we can choose on $U_1$ an isomorphism $\cL\addedo{_{Y|U_1}}\simeq\bigoplus_{i=1}^r\addedo{\cG_{c_i}^{(1)}}$, where~$\addedo{\cG_{c_i}^{(1)}}$ are local systems on $U_1$, and the isomorphism is compatible with the Stokes filtration on $I_1\times\{\infty\}$, so we can work independently with each summand~$\addedo{\cG_{c_i}^{(1)}}$. \addedo{Arguing as for $\bR j_{1,*}\cG_{|U_1}$ above, we find that each $\bR j_{1,*}\cG_{c_i}^{(1)}$ is a sheaf $j_{1,*}\cG_{c_i}^{(1)}$ and therefore $H^k(U_1,\cG_{c_i}^{(1)})=H^k(\ov U_1,j_{1,*}\cG_{c_i}^{(1)})$.}

\addedo{Arguing as for $\cG_1$, we find that $j_{1,*}\cG_{c_i}^{(1)}$ is a (constant) local system on~$\ov U_1\moins\big([\theta_i,\theta'_o+\epsilon]\times\{\infty\}\big)$, for some $\theta_i\in I_1$, and is zero on $[\theta_i,\theta'_o+\epsilon]\times\{\infty\}$. Identifying topologically the closure $\ov U_1$ of $U_1$ with a closed disc $\ov D$, the cohomology of such a sheaf is the relative cohomology modulo a closed interval in $\partial\ov D$ of the constant sheaf on $\ov D$}, so identically $0$, hence \eqref{eq:annulU1}. The same result holds for $U_2$, of course.

Let us now compute $H^k(U_1\cap U_2,\cG)$. We identify each connected component of $\ov{U_1\cap U_2}$ to a closed disc, and a similar computation shows that \hbox{$H^k(U_1\cap U_2,\cG)=0$} for $k\neq0$ and $\Gamma(U_1\cap U_2,\cG)\simeq L_{\theta_o}$.
\end{proof}

Recall that $\cF_{\leq c}=e_*\cG$. On the other hand, $\bR^1e_*\cG$ is a sheaf supported at the origin on $X$, whose germ is equal to $H^1(S^1_0,\cL\addedo{_{Y|S^1_0}})$. \addedo{Since $\bR e_*\cG$ has only two cohomology sheaves, there is a natural triangle
\[
e_*\cG\to\bR e_*\cG\to\bR^1e_*\cG[-1]\To{+1}
\]
inducing a long exact sequence in hypercohomology over $X$. Note that the space $\bH^k(X,\bR^1e_*\cG[-1])=H^{k-1}(X,\bR^1e_*\cG)$ is equal to~$0$ if $k\neq1$ and to the germ $(\bR^1e_*\cG)_0=H^1(S^1_0,\cL_{Y|S^1_0})$ if $k=1$. On the other hand, $\bH^k(X,e_*\cG)=H^k(X,e_*\cG)$ and $\bH^k(X,\bR e_*\cG)=H^k(Y,\cG)$. Moreover, from the previous lemma we get $H^2(Y,\cG)=0$. We therefore obtain a long exact sequence}
\begin{equation}\label{eq:HG}
0\to H^1(X,\cF_{\leq c})\to H^1(Y,\cG)\to H^1(S^1_0,\cL_Y)\to H^2(X,\cF_{\leq c})\to0,
\end{equation}
where the middle map is the restriction morphism to $S^1_0$. We have a commutative diagram
\[
\xymatrix{
H^1(Y,\cG)\ar[d]\ar[r]&H^1(Y,j_{\infty,*}j_\infty^{-1}\cG)\ar[d]^\wr\ar[r]^-\sim&H^1(S^1_0,\cL_Y)\\
H^1(S^1_\infty,\cL_{\leq c})\ar[r]&H^1(S^1_\infty,\cL)
}
\]
where the vertical arrows are the restriction to $S^1_\infty$, and the lower horizontal line is the right part of the exact sequence in Lemma \ref{lem:cohomS1}. Moreover, the left vertical morphism is an isomorphism, according to the computation of Lemma \ref{lem:annul}. As a consequence, $H^1(Y,\cG)\to H^1(S^1_0,\cL_Y)$ is onto and its kernel $H^1(X,\cF_{\leq c})$ is identified with $F_c=\im\can$ via the restriction morphism $H^1(X,\cF_{\leq c})\to H^1(S^1_\infty,\cL_{\leq c})$.
\end{proof}

\begin{proof}[Proof of Proposition \ref{prop:pairing}\eqref{prop:pairing2}]
We use the commutative diagram
\[
\xymatrix{
H^1(X,\cF_{\leq c})\otimes H^1(X,\iota^{-1}\ov\cF_{\leq c})\ar[r]^-{\wh \varh_c}\ar@<1cm>[d]& H^2(X,j_{\infty,!}\kk_{\Afu})\\
H^1(S^1_\infty,i_\infty^!\cF_{\leq c})\otimes H^1(S^1_\infty,i_\infty^{-1}\iota^{-1}\ov\cF_{\leq c})\ar@<1.5cm>[u]^{\can}\ar[r]^-{\varh_{\infty,c}}&H^1(S^1_\infty,\kk_{S^1})\ar[u]
}
\]
where the vertical morphisms are induced by the restriction or by the natural morphism $\bR i_{\infty,*}i_\infty^!\to\id$ and we can eliminate $\iota^{-1}$ in the cohomology. The identification of the lower pairing $\varh_{\infty,c}$ to~$\varh_c$ of \ref{eq:Hhc} follows from Lemma \ref{lem:iinfty!}. To conclude, we use \eqref{eq:hh}.
\end{proof}

\section{Riemann-Hilbert correspondence and sesquilinear pairings}\label{sec:RHsesqui}
All along this section, we will only consider holonomic $\Cltau$-modules $N$ (where~$\tau$ is the coordinate on the affine line $\Afu=\Afu_\tau$) with a regular singularity at $\tau=0$ and no other singularities at finite distance, and of exponential type at $\tau=\infty$, meaning that the Laplace (or inverse Laplace) transform has only regular singularities (\cf\eg\cite[Lemma 1.5]{Bibi08}). \deletedo{Lastly, assume (for the sake of simplicity) that~$N$ is a minimal extension at $\tau=0$.} For such a holonomic $\Cltau$-module $N$, we will denote by $\cN$ the $\cO_{\PP^1}(*\infty)$-module with connection associated with~$N$. We will use the notation of \S\ref{sec:pervStokesP1}.

\subsection{The Riemann-Hilbert correspondence}\label{subsec:RH}
Denote by $\cA^{\rmod\infty}_X$ the sheaf on~$X$ of holomorphic functions on~$\Afu$ which have moderate growth along $S^1_\infty$. This is naturally a subsheaf of $j_{\infty,*}\cO_{\Afu}$. It is a $\varpi^{-1}\cO_{\PP^1}(*\infty)$-submodule and is stable by the natural action of $\varpi^{-1}\cD_{\PP^1}$ on $j_{\infty,*}\cO_{\Afu}$. We will also consider the subsheaves $e^{c\tau}\cA^{\rmod\infty}_X\subset j_{\infty,*}\cO_{\Afu}$ ($c\in\CC$), which satisfy similar properties and coincide with $\cO_{\Afu}$ on $\Afu$. Given $\addedo{e^{i\theta}}\in S^1_\infty$, the germs satisfy $e^{c\tau}\cA^{\rmod\infty}_{X,\theta}\subset e^{c'\tau}\cA^{\rmod\infty}_{X,\theta}$ as soon as $c\leqtheta c'$ (\cf\S\ref{subsec:Stokesfil}\eqref{enum:Stokesfil1}), since $e^{c\tau}=e^{c'\tau}\cdot e^{(c-c')\tau}$ and $e^{(c-c')\tau}$ has moderate growth as well as all its derivatives near~$\theta$.

For each $c\in\CC$ we denote by $\DR_{\leq c}(N)$ the complex
\begin{equation}\label{eq:DRleqc}
(e^{c\tau}\cA^{\rmod\infty}_X)\otimes_{\cO_{\PP^1}(*\infty)}\cN
\To{\nabla}(e^{c\tau}\cA^{\rmod\infty}_X)\otimes_{\cO_{\PP^1}(*\infty)}(\Omega^1_{\PP^1}\otimes\cN).
\end{equation}
(The complex $\DR_{\leq 0}(N)$ is also denoted by $\DR^{\rmod\infty}(N)$.) We have a natural identification
\begin{equation}\label{eq:identificationc}
j_\infty^{-1}\DR_{\leq c}(N)=\DR^\an N.
\end{equation}
If we denote by $\cE^{c\tau}$ the $\Cltau$-module $(\CC[\tau],\partial_\tau+c)$, the termwise multiplication by $e^{-c\tau}$ induces an isomorphism $\DR_{\leq c}(N)\isom\DR^{\rmod\infty}(\cE^{c\tau}\otimes N)$.

There is a rapid-decay analogue. Firstly, the subsheaf $\cA^{\rd\infty}_X\subset\cA^{\rmod\infty}_X$ consists of those functions which have rapid decay along $S^1_\infty$. Then $\DR_{<c}(N)$ is defined by a complex similar to \eqref{eq:DRleqc} where we replace $\cA^{\rmod\infty}_X$ with $\cA^{\rd\infty}_X$\addedo{ (the complex $\DR_{<0}(N)$ is also denoted by $\DR^{\rd\infty}(N)$)}. We also have $e^{-c\tau}:\DR_{<c}(N)\isom\DR^{\rd\infty}(\cE^{c\tau}\otimes N)$.

\begin{proposition}\label{prop:RH}
\replacedm{If $N$ as above is a minimal extension at $\tau=0$}{For $N$ as above}, the complexes $\DR_{\leq c}(N)$ and $\DR_{<c}(N)$ have cohomology in degree $0$ at most for each $c\in\CC$, and the Riemann-Hilbert correspondence
\[
N\mto(\cH^0\DR^\an N,\cH^0\DR_\bbullet(N))=(\cF,\cF_\bbullet)
\]
is an equivalence between the full subcategory of the category of holonomic $\Cltau$-modules whose objects are of exponential type at infinity, are minimal extension\addedo{s} \addedo{with a regular singularity} at $0$ and have no other singularity, and the category of minimal Stokes-filtered constructible sheaves on $X$ (\cf Definition \ref{def:consStokes}).

Moreover, under this correspondence, we have
\[
\cF_{<c}=\cH^0\DR_{<c}(N)\quad\forall c\in\CC.
\]
\end{proposition}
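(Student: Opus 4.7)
The plan is to break the proof into a reduction, a vanishing statement on $\Afu$, a vanishing and structural statement along $S^1_\infty$, and finally the equivalence itself.

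First, I would reduce the cohomological vanishing of $\DR_{\leq c}(N)$ and $\DR_{<c}(N)$ to the case $c=0$. The termwise multiplication by $e^{-c\tau}$ produces quasi-isomorphisms
\[
\DR_{\leq c}(N)\isom\DR^{\rmod\infty}(\cE^{c\tau}\otimes N),\qquad \DR_{<c}(N)\isom\DR^{\rd\infty}(\cE^{c\tau}\otimes N),
\]
and the assumptions on~$N$ (regular minimal extension at~$0$, exponential type at~$\infty$) are preserved by tensoring with $\cE^{c\tau}$, since the latter only translates the exponential factors at $\tau=\infty$. It therefore suffices to show that $\DR^{\rmod\infty}(N)$ and $\DR^{\rd\infty}(N)$ are concentrated in degree $0$, which I handle in two pieces corresponding to $\Afu$ and to $S^1_\infty$.

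On $\Afu$, the identification \eqref{eq:identificationc} reduces the question to the classical fact that, for a regular holonomic $\Clt$-module with singularities at $\tau=0$, the analytic de~Rham complex $\DR^\an N$ is concentrated in degree $0$ if and only if~$N$ has no submodule supported at $\tau=0$, which is part of the minimal extension hypothesis. The resulting sheaf on $\Afu$ is automatically of the form $j_{0,*}j_0^{-1}\cF$, so the constructible sheaf obtained is minimal in the sense of \S\ref{sec:pervStokesP1}. Along $S^1_\infty$, I would invoke the local theory of moderate growth and rapid decay de~Rham complexes for connections of exponential type, as developed in \cite{Malgrange83bb,Malgrange91,Bibi08}. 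Concretely, the formal Levelt-Turrittin decomposition $\wh G\simeq\bigoplus_{c\in C}(\cE^{-c/\hb}\otimes \wh R_c)$ admits sectorial liftings of Hukuhara-Turrittin type, and in a neighbourhood of $e^{i\theta}\in S^1_\infty$ the germ $\cH^0\DR_{\leq c_o}(N)_\theta$ (\resp $\cH^0\DR_{<c_o}(N)_\theta$) is spanned by the horizontal sections of those sectorial summands with $c\leqtheta c_o$ (\resp $c\letheta c_o$). This simultaneously yields the vanishing of $\cH^k$ for $k\ne 0$ and the fact that the family $\cF_{\leq c}\defin\cH^0\DR_{\leq c}(N)$ is a Stokes filtration of exponential type on $\cL\defin i_\infty^{-1}j_{\infty,*}\cF$, with set of exponential factors equal to $C$; the analogous description gives $\cH^0\DR_{<c}(N)=\cF_{<c}$, which is the last assertion of the proposition.

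Once these cohomological statements are in place, the equivalence of categories follows by combining standard Riemann-Hilbert statements. Fully faithfulness is a matter of reconstructing morphisms of $\Cltau$-modules from morphisms of the underlying flat bundles on $\Afu\moins\{0\}$ compatible with the filtration at infinity, using that minimal extensions at~$0$ are uniquely determined by their restrictions to $\Afu\moins\{0\}$. For essential surjectivity, given a minimal Stokes-filtered constructible sheaf $(\cF,\cF_\bullet)$ on~$X$, Proposition \ref{prop:equivconnStokesloc} applied to $(\cL,\cL_\bullet)=(i_\infty^{-1}j_{\infty,*}\cF,i_\infty^{-1}\cF_\bullet)$ produces a meromorphic connection on $\PP^1$ with regular singularity at $0$ and exponential type at $\infty$; passage to the minimal extension at $\tau=0$, dictated by the minimality of $\cF$ (Lemma \ref{lem:equiv}), yields the desired $N$, and the comparison in the previous paragraph ensures that the de~Rham functor recovers $(\cF,\cF_\bullet)$.

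The main obstacle is the third paragraph: the analytic identification of $\cH^0\DR_{\leq c}(N)$ and $\cH^0\DR_{<c}(N)$ with the $\leq c$ and $<c$ pieces of the Stokes filtration via sectorial asymptotic expansions. The absence of ramification (exponential type) is precisely what allows one to carry this out without appealing to the finer theory of Mochizuki and Kedlaya and to quote the statements directly from \cite{Malgrange91,Bibi08}.
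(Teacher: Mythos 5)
Your proposal is correct and follows essentially the same route as the paper, whose proof consists precisely in citing the classical Riemann--Hilbert correspondence of Deligne--Malgrange--Babbitt--Varadarajan and calling the proposition a ``slight adaptation'' of it; your outline simply supplies the standard details of that adaptation (exponential twist to reduce to $c=0$, minimal extension at $\tau=0$ on $\Afu$, Hukuhara--Turrittin sectorial liftings along $S^1_\infty$, and the gluing via Proposition~\ref{prop:equivconnStokesloc} and Lemma~\ref{lem:equiv}). One small imprecision: the vanishing of $\cH^1\DR^\an N$ at $\tau=0$ is governed by the absence of a \emph{quotient} supported at $0$, while the absence of a \emph{submodule} is what ensures $\cF=j_{0,*}j_0^{-1}\cF$; since the minimal extension hypothesis excludes both, this does not affect the argument.
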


\begin{proof}
This is a slight adaptation of the main statement in \cite{Deligne78} (\cf also \cite{Malgrange83bb,B-V89,Malgrange91}).
\end{proof}

\subsection{Sesquilinear pairings}\label{subsec:sesquiN}
Let $\varh:N'\otimes_\CC\ov N{}''\to\cS'(\Afu_\tau)$ be a sesquilinear pairing between holonomic $\Cltau$-modules as considered in the beginning of this section. Then $\varh$ induces a morphism of bicomplexes
\[
\varh_{\DR,0}:\DR^{\rd\infty_\tau}_XN'\otimes_\CC\DR^{\rmod\infty_\tau}_X\ov N{}''\to\Db_X^{\rd\infty_\tau,(\cbbullet,\cbbullet)},
\]
where $\Db_X^{\rd\infty_\tau,(\cbbullet,\cbbullet)}$ is the bicomplex of currents on $X$ with rapid decay along $S^1_{\infty_\tau}$. More generally, since for each $c\in\CC$, the function $e^{\ov{c\tau}-c\tau}$ has moderate growth along $\infty_\tau\in\PP^1_\tau$ or $S^1_{\infty_\tau}\subset X$ as well as all its derivatives, $\varh$ defines a morphism of complexes
\begin{equation}\label{eq:varhc}
\varh_{\DR,c}:\DR_{<c}(N')\otimes_\CC\DR_{\leq -c}(\ov N{}'')\to\Db_X^{\rd\infty,(\cbbullet,\cbbullet)}
\end{equation}
Since the simple complex associated to the double complex $\Db_X^{\rd\infty_\tau,(\cbbullet,\cbbullet)}$ is a resolution of $j_{\infty_\tau,!}\CC_{\Afu_\tau}$, by taking $\cH^0$ we deduce for each~$c$ a pairing
\begin{equation}\label{eq:varhcb}
\varh_{\DR,c}:\cF'_{<c}[1]\otimes\ov\cF{}''_{\leq -c}[1]\to j_{\infty,!}\CC_{\Afu_\tau}[2]
\end{equation}
where $j_{\infty,!}\CC_{\Afu_\tau}[2]$ is the dualizing complex on $X$. All these pairings $\varh_{\rB,c}$ coincide, when restricted to $\Afu$ and using the identification \eqref{eq:identificationc}, with the pairing
\[
\varh_{\DR}:\DR^{\an}N'\otimes_\CC\DR^{\an}\ov N{}''\to\Db_{\Afu}^{(\cbbullet,\cbbullet)}.
\]
In particular, if $\varh:N\otimes_\CC\iota^+\ov N\to\cS'(\Afu_\tau)$ is a $\iota$-sesquilinear pairing on~$N$, it induces a $\iota$-sesquilinear pairing $\varh_{\rB,c}:\cF_{<c}\otimes_\CC\iota^{-1}(\ov\cF_{\leq c})\to j_{\infty,!}\CC_{\Afu_\tau}$ for each $c\in\CC$.

\begin{lemme}\label{lem:RHh}
Let $(\cF',\cF'_\bbullet)$, $(\cF'',\cF''_\bbullet)$ be minimal Stokes-filtered constructible sheaves corresponding to holonomic $\Cltau$-modules $N',N''$ through the equivalence of Proposition \ref{prop:RH}. Then any sesquilinear pairing $\varh_\rB$ between $(\cF',\cF'_\bbullet)$ and $(\cF'',\cF''_\bbullet)$ takes the form $\varh_{\DR}$ for a unique sesquilinear pairing~$\varh$ between $N'$ and $N''$.
\end{lemme}
\addedo{\begin{remarque}
The minimality property at $0$ is assumed in Proposition \ref{prop:RH} and Lemma \ref{lem:RHh} for the sake of simplicity. Without this assumption, the proposition would also hold, but one should first correctly define the category of perverse sheaves on $X$ with a Stokes filtration at infinity. We will not need such a generalization.
\end{remarque}}

\begin{proof}[Proof\addedn{ of Lemma \ref{lem:RHh}}]
The equivalence of categories of Proposition \ref{prop:RH} gives a unique correspondence between morphisms. We will therefore express the pairings as morphisms.

On the one hand, recall (\cf Proposition \ref{prop:duality}) that $(\cF^{\prime\prime\vee},\cF^{\prime\prime\vee}_\bbullet)$ is a minimal Stokes-filtered constructible sheaf, as well as its conjugate, so that $\varh_\rB$ can be regarded as a morphism from $(\cF',\cF'_\bbullet)$ to the conjugate $(\cF^{\prime\prime\dag},\cF^{\prime\prime\dag}_\bbullet)$ of $(\cF^{\prime\prime\vee},\cF^{\prime\prime\vee}_\bbullet)$. By the equivalence of Lemma \ref{lem:equiv} it corresponds in a unique way to a morphism $(\cL',\cL'_\bbullet)\to(\cL^{\prime\prime\dag},\cL^{\prime\prime\dag}_\bbullet)$. The Stokes data of the latter are obtained by conjugating \ref{eq:tStokesdata}.

On the other hand, let us set $N^{\prime\prime\dag}=\Hom_{\ov{\Cltau}}(\ov N{}'',\cS'(\Afu))$, that we consider as a $\Cltau$-module through the $\Cltau$-module structure of $\cS'(\Afu)$. It is known that $N^{\prime\prime\dag}$ is a holonomic $\Cltau$-module which belongs to the category considered in Proposition \ref{prop:RH}. Indeed, this is obtained by sheafifying the construction on $\PP^1$. Then, on $\Afu$, the result follows from \cite{Kashiwara86}, and near $\infty$ it follows from \cite[\S II.3]{Bibi97}. Now, a sesquilinear pairing $\varh:N'\otimes_\CC\ov N{}''\to\cS'(\Afu_\tau)$ is regarded as a $\Cltau$-linear morphism $N'\to N^{\prime\prime\dag}$.

The lemma reduces then to identifying the Stokes data at infinity of $N^{\prime\prime\dag}$ to the conjugate of \ref{eq:tStokesdata} (we will not recall here the classical relationship between Stokes data at infinity for a meromorphic connection $N$ considered as matrices of the form $\id+\text{rapid decay}$ and the Stokes data considered in \S\ref{subsec:Stokesdata}). We will recall in this simple case a sketch of the proof given in \cite[\S II.3]{Bibi97}. We will work locally near infinity, with local coordinate $\hb=1/\tau$, and denote by $\ccN$ the germ of $\cO_{\PP^1}(*\infty)\otimes_{\CC[\tau]}N''$ at infinity. Setting $\cO=\CC\{\hb\}$ and $\cD=\cO\langle\partial_\hb\rangle$, $\ccN$ is a $\cO[\hb^{-1}]$-module with connection and a holonomic $\cD$-module. We also denote by $\Db^{\rmod\infty}$ the germ at $\infty$ of the sheaf $\Db_{\PP^1}^{\rmod\infty}$ already considered in \S\ref{subsec:sesqM} and we set $\ccN^\dag=\cHom_{\ov\cD}(\ov\ccN,\Db^{\rmod\infty})$.

It will be convenient to work on $X$ near $S^1_\infty$. We denote by $\cA^{\rmod\infty}$ the germ along $S^1_\infty$ of $\cA_X^{\rmod\infty}$ and by $\wt\Db{}^{\rmod\infty}$ that of the sheaf on $X$ of distributions having moderate growth along $S^1_\infty$. If $\varpi:X\to\PP^1$ denotes the projection, we set $\wt\ccN=\cA^{\rmod\infty}\otimes_{\varpi^{-1}\cO}\ccN$. There exists $\ccN^\el$ of the form $\ccN^\el=\bigoplus_{i=1}^r(\cE^{\addedo{-}c_i\tau}\otimes\cR_i)$, where each $\cR_i$ has regular singularity, such that $\wt\ccN$ is locally on $S^1_\infty$ isomorphic to $\wt{\ccN^\el}$. It is proved in \loccit that
\begin{itemize}
\item
$(\ccN^\el)^\dag$ is a germ of meromorphic connection at $\infty$,
\item
$\wt{\ccN^\dag}=(\wt\ccN)^\dag\defin\cHom_{\varpi^{-1}\ov\cD}(\varpi^{-1}\ov\ccN,\wt\Db{}^{\rmod\infty})$, which is also a $\bR\cHom$,
\item
$(\wt\ccN)^\dag$ is locally isomorphic to $(\wt{\ccN^\el})^\dag$ and the Stokes data (\ie gluing data) needed to recover $(\wt\ccN)^\dag$ from $(\wt{\ccN^\el})^\dag$ are obtained from those corresponding to $\wt\ccN$ in a natural way, \ie are inverse transposed conjugate of these. (The point is to prove that these inverse transposed conjugate Stokes data are indeed Stokes data, \ie are of the form $\id+\text{rapid decay}$, while they a priori only have moderate growth.)
\end{itemize}
This shows that $\ccN^\dag$ corresponds, via Proposition \ref{prop:RH}\addedo{,} to $(\cL^\dag,\cL^\dag_\bbullet)$.
\end{proof}

\Subsection{Compatibility of the sesquilinear pairing with taking cohomology}
Let $\varh:N\otimes_\CC\iota^+\ov N\to\cS'(\Afu_\tau)$ be a $\iota$-sesquilinear pairing on~$N$, \replacedo{where $N$ is as occurring in the equivalence of categories in Proposition \ref{prop:RH}}{as considered in this section}. Let us fix~$c\in\CC$ satisfying \eqref{assumpt:aa} and \eqref{assumpt:b} of \S\ref{subsec:sesqcohom}.

On the one hand, $\varh$ defines $\varh_{\DR,c}:\cF_{\leq c}\otimes_\CC\iota^{-1}(\ov\cF_{\leq c})\to j_{\infty,!}\CC_{\Afu_\tau}$ (because $\cF_{<c}=\nobreak\cF_{\leq c}$), and then $\wh{\varh_{\DR,c}}$ on $H^1(X,\cF_{\leq c})$, according to Proposition \ref{prop:pairing}.

On the other hand, let us denote by $M=\Foub N$ the inverse Laplace transform of~$N$, which by assumption is a regular holonomic $\Clt$-module with singular set contained in~$C$. The complex $M\To{t-c}M$ is identified with the algebraic de~Rham complex $N\To{-\partial_\tau-c}N$, which can be computed analytically as $\bR\Gamma\big(X,\DR^{\rmod\infty}(\cE^{c\tau}\otimes\nobreak N)\big)$, and we have seen that this complex is isomorphic to $\bR\Gamma\big(X,\DR_{\leq c}(N)\big)$ via the multiplication by $e^{c\tau}$ termwise. For $c\not\in C$, the latter complex has cohomology in degree $1$ at most, so the fibre $M/(t-c)M$ is identified with $H^1(X,\cF_{\leq c})$. Now, the inverse Fourier transform $\ov F_\tau \varh$ is a sesquilinear pairing on the inverse Laplace transform~$M$ of~$N$. We set $h=\wh \varh=-2\pi i \ov F_\tau \varh$. Restricting it to $\Afu_t\moins C$, it takes values in $\cC^\infty_{\Afu_t\moins C}$. Restricting it to the fibre at~$c$ also induces a sesquilinear pairing on $H^1(X,\cF_{\leq c})$, that we denote by $h_{\DR,c}=\wh \varh_{\DR,c}$. We will give a detailed proof of the following lemma in the appendix.

\begin{lemme}\label{lem:hBc}
We have $\wh \varh_{\DR,c}=\wh{\varh_{\DR,c}}$.
\end{lemme}

\subsection{The main theorem}
Let \replacedo{$((G_{c,1},G_{c,2})_{c\in C},S,S')$}{$(C,S=(S_{ij})_{i,j=1,\dots, n},S'=(S'_{ij})_{i,j=1,\dots, n})$} be Stokes data \addedo{of type $(C,\theta_o)$} as in Definition \ref{def:catStokesdata}\deletedm{, which satisfy the minimality property of Definition \ref{def:critminext}}. Let us fix bases of $G_{c,\ell}$, $c\in C$ and $\ell=1,2$ and let us denote by $\Sigma,\Sigma'$ the matrices of $S,S'$ in these bases.\addedm{ The choice of bases also fixes a sesquilinear form $\varh_{1\ov2}$ whose matrix in these bases is the identity.} According to the Riemann-Hilbert correspondence, these data define a meromorphic bundle $(\cH(*0),\nabla)$ on $\Afuan_\hb$ with connection \replacedo{having a}{with} pole at $\hb=0$ only. The connection is of exponential type. We denote by $(\cH,\nabla)$ the Deligne-Malgrange lattice \deletedm{of} $\addedm{\DM^{>0}}(\cH(*0),\nabla)$.

If $\Sigma'=-{}^t\ov\Sigma$, the local system attached to $(\cH(*0),\nabla)$ is equipped with a $\iota$\nobreakdash-skew-Hermitian pairing $\varh_\rB$, and we can apply the ``twistor gluing procedure'' of \addedn{\cite[Lemma 2.14]{Hertling01}} and \cite[Def\ptbl1.25]{Bibi05} (\cf\S\ref{subsec:tw})\addedm{ by using the $\iota$-Hermitian pairing $-2\pi i\varh_\rB$}.\addedo{\deletedm{ In such a case, let us also make explicit the minimality assumption on the Stokes data (see also Remark \ref{rem:minassumption}): it means that the radical of the Hermitian form $\Sigma+{}^t\ov\Sigma$ contains no non-zero vector in one of the blocks $G_{c,1}$ ($c\in C$).}}

\begin{theoreme}\label{th:main}
\addedn{Let $((G_{c,1},G_{c,2})_{c\in C},S,S')$ be Stokes data of type $(C,\theta_o)$\deletedm{ which satisfy the minimality property}.} Assume that there exist bases of $G_{c,\ell}$, $c\in C$ and $\ell=1,2$ such that the matrices $\Sigma,\Sigma'$ of $S,S'$ satisfy
\begin{align*}\tag*{\protect(\ref{th:main})($*$)}\label{eq:main*}
&\Sigma'=-{}^t\ov\Sigma\quad\text{and}\quad\Sigma+{}^t\ov\Sigma \text{ is positive semi-definite,}\\
\tag*{\protect(\ref{th:main})($**$)}\label{eq:main**}&\addedm{\forall c\in C,\quad \text{either $K_c=0$ or $2\pi i\varh_{K_c}$ is positive definite on $K_c$.}}
\end{align*}
\addedm{(\cf Remark \ref{rem:Kc} for $\varh_{K_c}$.)} Then, the twistor structure on $\PP^1$ obtained from $(\cH,\nabla,-2\pi i\varh_\rB)$, (where $(\cH,\nabla)$ is the Deligne-Malgrange lattice defined by $\Sigma$) by the ``twistor gluing procedure'' is pure of weight $0$ and polarized.
\end{theoreme}

\begin{proof}
\addedm{Condition \ref{eq:main**} implies that $\varh_{K_c}$ is nondegenerate for each $c\in C$. Then, as we already noticed at the end of Remark \ref{rem:Kc}, the Stokes data enriched with the sesquilinear form $\varh_{1\ov2}$ split as a direct sum of minimal Stokes data and trivial Stokes data on each $K_c$ (both enriched with sesquilinear forms). The proof splits correspondingly. The non-trivial part concerns the case of minimal Stokes data (all $K_c$ equal to zero), that we consider now.}

In accordance with the previous part of the article, we will work with the variable $\tau=1/\hb$\addedo{, so we now denote by $(\cH(*\infty),\nabla)$ the meromorphic bundle defined above on $\PP^1_\tau\moins\{0\}$}. Let us denote by $\ccN(*0)$ the Deligne meromorphic extension \addedo{(with a regular singularity)} at $\tau=0$ of $\cH(*\infty)$, by $\ccN$ its minimal extension at $\tau=0$ and by $N$ \replacedo{the}{its} global sections \addedo{of $\ccN$} on $\PP^1$.

As indicated in Lemma \ref{lem:varhs}, the pairing $\varh_{\rB,\infty}$ on $(\cL,\cL_\bbullet)$ determined by the Stokes data and the properties of $\Sigma,\Sigma'$ (\cf\S\ref{subsec:htheta}) gives rise in a unique way to a $\iota$-sesquilinear form $\varh_\rB$ on $(\cF,\cF_\bbullet)$ which restricts (in the sense of Proposition \ref{prop:duality}) to $\varh_{\rB,\infty}$ on $S^1_\infty$, and, according to Lemma \ref{lem:RHh}, to a unique $\iota$-sesquilinear pairing $\varh$ on $N$. Let us choose $c\in\CC$ which satisfies both properties \eqref{assumpt:aa} and \eqref{assumpt:b} of \S\ref{subsec:sesqcohom} with respect to $\cF$ (\eqref{assumpt:aa} means that $c\not\in C$). Then, after our assumption on $\Sigma,\Sigma'$ and according to Corollary \ref{cor:posdef} and Proposition \ref{prop:pairing}, $\wh{\varh_{\DR,c}}$ is positive definite. Hence, $\wh\varh_{\DR,c}$ is so, according to Lemma \ref{lem:hBc}. We conclude by applying Corollary \ref{cor:N}\addedm{, since $\Fou\,\wh\varh=-2\pi i\varh$}.

\addedm{%
Let us now consider the easy case with enriched Stokes data $(K_{c_o},S(K_{c_o}),S,S,\varh_{K_{c_o}})$, which are isomorphic to enriched Stokes data $(K_{c_o},K_{c_o},\id,\id,\varh_{K_{c_o}})$, where $\varh_{K_{c_o}}$ is a nondegenerate skew-Hermitian form on $K_{c_o}$. We will first adapt Proposition \ref{prop:RH} and Lemma \ref{lem:RHh} in this case.}

\addedm{
Let us set $N=(K_{c_o}\otimes\CC[\tau],d-c_o\id d\tau)$ and consider some Hermitian form $h_{K_{c_o}}$ on $K_{c_o}$. Then we define the $\iota$-Hermitian form $\varh$ on $N$ by the formula
\[
\varh(u_{c_o}\otimes f(\tau),\ov{v_{c_o}\otimes g(\tau)})=h_{K_{c_o}}(u_{c_o},\ov v_{c_o})f(\tau)\ov{g(-\tau)}e^{\ov{c_o\tau}-c_o\tau}.
\]
The complex $\DR^\an N$ has cohomology in degree $0$ only and $\cH^0\DR^\an N\!\subset\!\cO_{\Afuan_\tau}\otimes\nobreak N=K_{c_o}\otimes \cO_{\Afuan_\tau}$ is identified with the constant sheaf $K_{c_o}\otimes \CC\cdot e^{c_o\tau}$ and, with this identification,
\[
\varh_{\DR}(u_{c_o}\otimes e^{c_o\tau},\ov{v_{c_o}\otimes e^{-c_o\tau}})=h_{K_{c_o}}(u_{c_o},\ov v_{c_o}).
\]
Setting $G=(K_{c_o}\otimes\CC[\tau,\tau^{-1}],d-c_od\tau)=(K_{c_o}\otimes\CC[\hb,\hb^{-1}],d+c_od\hb/\hb^2)$, we have $\DM^0G=K_{c_o}\otimes\nobreak\CC[\hb]$ and $\DM^{>0}G=K_{c_o}\otimes\nobreak(\hb\CC[\hb])$. It is easy to check (\cf \cite[Example 1.33(1)]{Bibi05}) that, if $h_{K_{c_o}}$ is positive definite on $K_{c_o}$, then $(\DM^0G,\nabla,\varh_{\DR})$ defines, by twistor gluing, an integrable twistor structure which is pure of weight $0$ and polarized. It easily follows that $(\DM^{>0}G,\nabla,-\varh_{\DR})$ defines, by twistor gluing, an integrable twistor structure which is pure of weight $0$ and polarized. Below, we will not distinguish between $\varh_{\DR}$ and $h_{K_{c_o}}$.}

\addedm{In conclusion, starting with the skew-Hermitian form $\varh_{K_{c_o}}$ on $K_{c_o}$, if the Hermitian form $h_{K_{c_o}}\defin2\pi i\varh_{K_{c_o}}$ is positive definite, then $(\DM^{>0}G,\nabla,-2\pi i\varh_{K_{c_o}})$ defines, by twistor gluing, an integrable twistor structure which is pure of weight $0$ and polarized.}
\end{proof}

\begin{remarques}\mbox{}
\begin{enumerate}
\item
This statement was conjectured (and proved in a particular case) in \cite[Conj\ptbl10.2]{H-S06} and was the main motivation for proving Theorem \ref{th:main}\addedo{.}\deletedo{, so that one can apply the results of \cite{Bibi05},} As in the partic\-ular case treated in \cite[Lemma\ptbl10.1]{H-S06}\addedo{, the main idea is to apply the results of~\cite{Bibi05}}.
\item
If $\Sigma$ is \emph{real}, the integrable twistor structure that we get is a TERP structure in the sense of \cite{Hertling01}. If $\Sigma$ is rational, we get a non-commutative Hodge structure, in the sense of \cite{K-K-P08}.
\item
The simplest example of a complex variation of polarized Hodge structure on $\Afu\moins C$ is that of a holomorphic vector bundle $V$ with a flat Hermitian metric (the weight is zero and the Hodge type is $(0,0)$). Recall more generally that variations of polarized Hodge structures on $\Afu\moins C$ correspond exactly to variations of polarized pure integrable twistor structures which are tame (\ie have regular singularities) at the singularities $C\cup\{\infty\}$, after \cite[Th\ptbl 6.2]{H-S08}.

Similarly, Theorem \ref{th:main} gives the simplest example of an integrable twistor structure whose associated variation by rescaling $\hb$ (\cf \cite[\S4]{H-S06} and \cite[\S2.d]{Bibi05}) is wild at $\tau=\infty$. It is obtained by Fourier-Laplace transform\addedo{ation} from the previous one.

\addedo{\deletedm{Theorem \ref{th:main} may be not an optimal statement, and one can expect that part of the minimality assumption can be relaxed.}}

\addedo{\item
One can conjecture a kind of converse of Theorem \ref{th:main} in the following way. Given a block upper triangular matrix $\Sigma$ such that the diagonal blocks $\Sigma_{ii}$ (\hbox{$i=1,\dots,n$}) are invertible, then $\Sigma+{}^t\ov\Sigma$ is positive semi-definite if, for all pairs $(C,\theta_o)$ consisting of a subset $C\subset\CC$ with $\#C=n$ and $\theta_o\in\RR/2\pi\ZZ$ generic with respect to $C$ (\cf \S\ref{subsec:Stokesdata}), the corresponding twistor structure considered in Theorem \ref{th:main} is pure and polarized.}
\end{enumerate}
\end{remarques}

\section*{Appendix: Proof of Lemma \ref{lem:hBc}}
\setcounter{section}{1}\setcounter{subsection}{0}\setcounter{equation}{0}
\def\thesection{\Alph{section}}

We will give a detailed proof of this lemma. A proof of a similar result had only been sketched for \cite[Prop\ptbl1.18]{Bibi05}. We will keep the setting of \S\ref{sec:RHsesqui}.

\subsection{Integral formula for the Fourier transform of a sesquilinear pairing between $\cD$-modules}
We start by expressing~$\wh\varh$ (as defined in Corollary \ref{cor:N}\eqref{cor:N2}) by an integral formula.

Let us first recall that the inverse \replacedo{Laplace}{Fourier} transform~$M$ of $N$ can be obtained as the algebraic direct image $q_+(p^+N\otimes_{\CC[t,\tau]}E^{t\tau})$, where $p$ (\resp $q$) denotes the projection from $\Afu_\tau\times\Afu_t$ to $\Afu_\tau$ (\resp~$\Afu_t$) and $E^{t\tau}=(\CC[t,\tau],d+\tau dt+td\tau)$. Moreover, this formula can be sheafified and made analytic, giving $\cM=q_+(p^+\cN\otimes\cE^{t\tau})$, where $\cN$ is as in the beginning of Section \ref{sec:RHsesqui}, and $p,q$ now denote the projections $\PP^1_\tau\times\PP^1_t\to\PP^1_\tau$ or $\PP^1_t$ in the analytic category (\cf \cite[Appendix~A]{D-S02a} for details). Since we are only interested in the behaviour on $\Afu_t\moins C$, we will set $Y=\Afu_t\moins C$ and denote by $p:\PP^1_\tau\times Y\to \PP^1_\tau$ the projection, and similarly for $q$. Then $\cM$ is a $\cD_Y$-module. By assumption on~$C$, it is $\cO_Y$-locally free. More precisely, if $\cN$ has the connection~$\nabla$, $p^+\cN\otimes\cE^{t\tau}$ is $p^*\cN\defin\cO_{\PP^1_\tau\times Y}\otimes_{p^{-1}\cO_{\PP^1_\tau}}p^{-1}\cN$ equipped with the connection $p^*\nabla+\tau dt+td\tau$, and~$\cM$ is the first cohomology of the relative de~Rham complex
\begin{equation}\label{eq:DRrel}
q_*p^*\cN\To{\nabla+td\tau}q_*(p^*\cN\otimes\Omega^1_{\PP^1_\tau\times Y/Y})
\end{equation}
equipped with the connection induced by $d_Y+\tau dt$, where $d_Y$ is the differential relative to $Y$. Notice that $p^*\cN$ and $p^*\cN\otimes\Omega^1_{\PP^1_\tau\times Y/Y}$ are $q_*$-acyclic (\cf \loccit). Moreover this complex has cohomology in degree one at most.

For the sake of simplicity, we will denote the volume form $\itwopi d\tau\wedge d\ov\tau$ (on $\Afu_\tau$) by $d\vol_\tau$, its $t$-analogue (on $Y$) by $d\vol_t$.

We will use the following lemma:
\begin{lemme}\label{lem:rapiddecay}
Let $\varphi$ be $C^\infty$ on $\PP^1_\tau\times Y$, with compact support. Then the function $\tau\mto\int_Ye^{t\tau-\ov{t\tau}}\varphi \,d\vol_t$ is $C^\infty$ on $\Afu_\tau$, with rapid decay at infinity as well as all its derivatives.
\end{lemme}

\begin{proof}
By assumption, $\varphi$ induces a $C^\infty$ function \addedo{on} $\Afu_\tau\times Y$ such that $\module{t}^{\addedo{m}}\partial_\tau^\alpha\partial_{\ov \tau}^\beta\partial_t^\gamma\partial_{\ov t}^\delta\varphi$ is bounded for all $\alpha,\beta,\gamma,\delta,\addedo{m}\geq0$ (since, in the coordinate $\tau'=1/\tau$, we have $\partial_\tau\varphi=-\tau^{\prime2}\partial_{\tau'}\varphi$ and similarly for $\partial_{\ov\tau}$).

Let us still denote by $\ov F_t\varphi$ the integral we consider (with some abuse, since $\varphi$ depends on $\tau$). That $\ov F_t\varphi$ is $C^\infty$ on $\Afu_\tau$ is clear. It is a matter of showing that each expression $\big\Vert \tau^a\ov\tau{}^b\partial_\tau^c\partial_{\ov \tau}^d\ov F_t(\varphi)\big\Vert_{L^\infty}$ is bounded. We have
\[
\tau^a\ov\tau{}^b\partial_\tau^c\partial_{\ov \tau}^d\ov F_t(\varphi)=\ov F_t\big((-\partial_t)^a\partial_{\ov t}^b(t+\partial_\tau)^c(-\ov t+\partial_{\ov\tau})^d\varphi\big).
\]
Since $\psi\defin(-\partial_t)^a\partial_{\ov t}^b(t+\partial_\tau)^c(-\ov t+\partial_{\ov\tau})^d\varphi$ satisfies the same properties as $\varphi$ does, as indicated above, it is enough to get a bound for $F_t\psi$ for such a $\psi$. Since $|e^{t\tau-\ov{t\tau}}|=1$, we have, for $\addedo{m}$ such that $(1+|t|^{2\addedo{m}})^{-1}$ is $L^1$ on~$\Afu_t$,
\[
\norme{\ov F_t\psi}_{L^\infty}\leq\norme{(1+|t|^{2\addedo{m}})^{-1}}_{L^1}\norme{(1+|t|^{2\addedo{m}})\psi}_{L^\infty}<+\infty.\qedhere
\]
\end{proof}

Let us still denote by $\varh$ the sheafified sesquilinear pairing $\cN\otimes_\CC\iota^+\ov\cN\to\Db_{\PP^1_\tau}^{\rmod\infty_\tau}$ (where $\cN$ is as in the beginning of Section \ref{sec:RHsesqui}). We will define a sesquilinear pairing
\[
\varh':(p^+\cN\otimes\cE^{t\tau})\otimes \iota^+\ov{p^+\cN\otimes\cE^{t\tau}}\to\Db_{\PP^1_\tau\times Y}.
\]
We will identify $\iota^+\ov{p^+\cN\otimes\cE^{t\tau}}$ with $\ov{(p^+\iota^+\cN)\otimes\cE^{-t\tau}}$, where $\iota$ denotes the involution $\tau\mto-\tau$ either before or after $p^+$. Let $n$ (\resp $n'$) be a local section of $p^+\cN$ (\resp $p^+\iota^+\cN$). We can write $n=\sum_i\phi_in_i$, $n'=\sum_j\psi_jn'_j$ (finite sums) where $n_i$ (\resp $n'_j$) are local sections of $\cN$ (\resp $\iota^+\cN$) and $\phi_i,\psi_j$ are local sections of $\cO_{\PP^1_\tau\times Y}$.

Let $\varphi$ be a $C^\infty$ form with compact support of maximal degree \addedo{(namely, degree~$4$)} on an open subset of $\PP^1_\tau\times Y$ where $n,n'$ are defined. By Lemma \ref{lem:rapiddecay}, the $2$-form $\tau\mto\int_Ye^{t\tau-\ov{t\tau}}\phi_i\psi_j\varphi$ has rapid decay at~$\infty_\tau$ for every $i,j$, so we can set \deletedo{set}
\begin{equation}\label{eq:varh'}
\langle\varh'(n,\ov n'),\varphi\rangle=\sum_{ij}\Big\langle\varh(n_i,\ov n'_j),\Big(\int_Ye^{t\tau-\ov{t\tau}}\phi_i\ov\psi_j\varphi\Big)\Big\rangle,
\end{equation}
and one checks that this does not depend on the chosen decomposition of $n,n'$, so that $\varh'(n,\ov n')$ is a section of $\Db_{\PP^1_\tau\times Y}$.

Integration of currents along $\PP^1_\tau$ composed with $\varh'$ induces a sesquilinear pairing
\[
q_*(p^*\cN\otimes\Omega^1_{\PP^1_\tau\times Y/Y})\otimes q_*\iota^*(\ov{p^*\cN\otimes\Omega^1_{\PP^1_\tau\times Y/Y}})\to\Db_Y
\]
which becomes a sesquilinear pairing (noticing that $\iota^*$ disappears after $q_*$ and using the twisted differentials as in \eqref{eq:DRrel})
\begin{equation}
q_+\varh':\cM\otimes\ov\cM\to\Db_Y.
\end{equation}
(That $q_+\varh'$ is well-defined and sesquilinear is checked in a standard way.)

\begin{lemme}\label{lem:compvarh}
We have \replacedo{$\wh\varh= q_+\varh'$}{$\itwopi\wh\varh= q_+\varh'$}.
\end{lemme}

\begin{proof}
Since $\cM$ is generated by~$M$ (inverse Laplace transform of $N$), and since $M=N$ as $\CC$-vector spaces, it is enough to check the equality for the values at $n,n'\in N$, according to sesquilinearity. Let $n,n'\in N$ and let $\eta$ be a $C^\infty$ $2$-form on $Y$ with compact support. By definition,
\[
\langle\addedo{\wh\varh(n,\ov n')},\eta\rangle=\langle\addedo{-2\pi i}\ov F_\tau\varh(n,\ov n'),\eta\rangle=\langle\varh(n,\ov n'),(\ov F_t\eta)d\tau\wedge d\ov\tau\rangle
\]
where $\ov F_t\eta=\int_Ye^{t\tau-\ov{t\tau}}\eta$. On the other hand, if we set $N[t]=\CC[t]\otimes_\CC N$, according to the identification \hbox{$M=N[t]\cdot d\tau/(p^*\nabla+td\tau)N[t]$}, we have
\[
\langle q_+\varh'(n,\ov n'),\eta\rangle=\langle \varh'(nd\tau,\ov{n'd\tau}),q^*\eta\rangle=\Big\langle\varh(n,\ov n'),\Big(\int_Ye^{t\tau-\ov{t\tau}}\addedo{\eta}\Big)d\tau\wedge d\ov\tau\Big\rangle,
\]
hence the assertion.
\end{proof}

\subsection{Lifting to $\wt\PP^1_\tau\times Y$ and restriction to $t=c$}
Let $\varpi:\wt\PP^1_\tau\times Y\to\PP^1_\tau\times Y$ be the oriented real blow up of $\infty_\tau\times Y$, and let $\cA_{\wt\PP^1_\tau\times Y}^{\rmod\infty_\tau},\cA_{\wt\PP^1_\tau\times Y}^{\rd\infty_\tau}$ be the corresponding sheaf of holomorphic functions which have moderate growth (\resp rapid decay) along $S^1_{\infty_\tau}\times Y$.

We will lift the sesquilinear pairing $\varh'$ as a sesquilinear pairing
\[
\wt\varh':\Big[\cA_{\wt\PP^1_\tau\times Y}^{\rd\infty_\tau}\otimes\varpi^{-1}(p^+\cN\otimes\cE^{t\tau})\Big]\otimes\Big[\ov{\cA_{\wt\PP^1_\tau\times Y}^{\rmod\infty_\tau}\otimes\varpi^{-1}\iota^+(p^+\cN\otimes\cE^{t\tau})}\Big]\to\Db_{\wt\PP^1_\tau\times Y}^{\rd\infty_\tau}.
\]
In order to do so, we use a formula analogous to \eqref{eq:varh'}, where now each $\phi_i$ is a local section of $\cA_{\wt\PP^1_\tau\times Y}^{\rd\infty_\tau}$, each $\psi_j$ is a local section of $\cA_{\wt\PP^1_\tau\times Y}^{\rmod\infty_\tau}$, and $\varphi$ has moderate growth along $S^1_{\infty_\tau}\times Y$. Since each term $\phi_i\ov\psi_j\varphi$ has rapid decay along $S^1_{\infty_\tau}\times Y$, \replacedo{the f}{F}ormula \eqref{eq:varh'} remains meaningful.

We will denote by $\Db_{\wt\PP^1_\tau\times Y/Y}^{\rd\infty_\tau}$ the subsheaf of $\Db_{\wt\PP^1_\tau\times Y}^{\rd\infty_\tau}$ consisting of distributions which are $C^\infty$ with respect to $t\in Y$. For such a distribution, the evaluation at $t=c\in Y$ is well defined as a distribution on $\wt\PP^1_\tau$ with rapid decay at $\tau=\infty$.

Recall (\cf \replacedo{after the proof of Lemma \ref{lem:rapiddecay}}{\S\ref{subsec:sesquiN}}) that the sesquilinear pairing $\varh:N\otimes_\CC\iota^+\ov N\to\cS'(\Afu_\tau)$ can be sheafified and lifted as above as a sesquilinear pairing
\[
\wt\varh_c:\Big[\cA_{\wt\PP^1_\tau}^{\rd\infty_\tau}\otimes\varpi^{-1}(\cN\otimes\cE^{c\tau})\Big]\otimes\Big[\ov{\cA_{\wt\PP^1_\tau}^{\rmod\infty_\tau}\otimes\varpi^{-1}\iota^+(\cN\otimes\cE^{c\tau})}\Big]\to\Db_{\wt\PP^1_\tau}^{\rd\infty_\tau}.
\]

\begin{proposition}
The sesquilinear pairing $\wt\varh'$ takes values in $\Db_{\wt\PP^1_\tau\times Y/Y}^{\rd\infty_\tau}$ and, for each $c\in Y$, its evaluation at $t=c$ is equal to $\wt\varh_c$.
\end{proposition}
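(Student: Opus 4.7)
The plan is to work directly from the defining formula for $\wt\varh'$ (the lift of formula \eqref{eq:varh'} to $\wt\PP^1_\tau\times Y$), verify that for each fixed test form on $\wt\PP^1_\tau$ the result depends $C^\infty$-ly on $t$, and then compute the restriction to $t=c$ by approximating the Dirac distribution $\delta_{t=c}$ by smooth forms.

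First, I would check the $C^\infty$-dependence on $t$. Locally decompose $n=\sum_i\phi_in_i$ and $n'=\sum_j\psi_jn'_j$, with $\phi_i\in\cA^{\rd\infty_\tau}_{\wt\PP^1_\tau\times Y}$, $\psi_j\in\cA^{\rmod\infty_\tau}_{\wt\PP^1_\tau\times Y}$, and $n_i,n'_j$ sections of $\cN$, $\iota^+\cN$. For a test form $\varphi$ on $\wt\PP^1_\tau$ of degree $2$ with moderate growth at $S^1_{\infty_\tau}$, and $t\in Y$ fixed, the product $e^{t\tau-\ov{t\tau}}\phi_i(t,\tau)\ov{\psi_j(t,\tau)}\varphi(\tau)$ has rapid decay at $S^1_{\infty_\tau}$ and depends $C^\infty$-ly on $t$ together with all its $\partial_t^a\partial_{\ov t}^b$-derivatives (which are again rapidly decaying, by the argument of Lemma \ref{lem:rapiddecay} applied parametrically). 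Pairing with the tempered distribution $\varh(n_i,\ov{n'_j})\in\Db^{\rmod\infty_\tau}_{\wt\PP^1_\tau}$ then yields a $C^\infty$ function of $t$, showing that $\wt\varh'(n,\ov{n'})$ lies in $\Db^{\rd\infty_\tau}_{\wt\PP^1_\tau\times Y/Y}$.

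For the equality with $\wt\varh_c$ at $t=c$, I would take a test form $\chi(\tau)d\vol_\tau$ on $\wt\PP^1_\tau$ with $\chi\in\cA^{\rmod\infty_\tau}_{\wt\PP^1_\tau}$, and approximate the restriction at $t=c$ by pairing $\wt\varh'(n,\ov{n'})$ with $\chi(\tau)d\vol_\tau\cdot\rho_\epsilon(t)$ where $\rho_\epsilon(t)d\vol_t$ tends to $\delta_{t=c}$. By continuity of each term and the $C^\infty$-dependence on $t$ just established, the limit is
\[
\sum_{ij}\Big\langle\varh(n_i,\ov{n'_j}),\;e^{c\tau-\ov{c\tau}}\phi_i(c,\tau)\ov{\psi_j(c,\tau)}\chi(\tau)d\vol_\tau\Big\rangle.
\]
This is exactly the value of $\wt\varh_c$ on the restrictions $(n\otimes e^{c\tau})|_{t=c}$ and $(n'\otimes e^{-c\tau})|_{t=c}$, once one observes that the exponential factor $e^{t\tau}$ arising from $p^+\cN\otimes\cE^{t\tau}$ specializes at $t=c$ to $e^{c\tau}$, which is precisely the exponential twist in $\cN\otimes\cE^{c\tau}$ used to define $\wt\varh_c$.

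The main obstacle will be a clean bookkeeping of the growth types: along $S^1_{\infty_\tau}\times Y$ the two slots of $\wt\varh'$ have asymmetric growth (rapid decay versus moderate growth), so one must verify that after multiplication by $e^{t\tau-\ov{t\tau}}$ and by a test form of moderate growth, the resulting integrand has rapid decay \emph{uniformly in $t$} on compact subsets of $Y$, together with all its $t$-derivatives; this is what licenses both the smoothness in $t$ and the interchange of limit and pairing in the restriction computation. Once this uniformity is in hand, the identification with $\wt\varh_c$ is essentially a substitution $t=c$ in the formula for $\wt\varh'$.
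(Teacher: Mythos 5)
Your proof is correct, but it follows a genuinely different route from the paper's. You argue directly from the defining formula: for fixed $t$ the product $e^{t\tau-\ov{t\tau}}\phi_i\ov{\psi_j}\varphi$ is a rapid-decay test form depending smoothly on $t$ in the test-space topology (the polynomial factors $\tau^a\ov\tau{}^b$ produced by $t$-derivatives of the exponential being absorbed by the rapid decay of $\phi_i$), so pairing with the fixed moderate distribution $\varh(n_i,\ov{n'_j})$ and differentiating under the pairing exhibits $\wt\varh'(n,\ov{n'})$ as a $C^\infty$ family of fibre distributions, and the mollifier computation identifies the fibre at $t=c$ with $\wt\varh_c$. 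The paper instead works structure-theoretically near $S^1_{\infty_\tau}\times Y$: it uses the Hukuhara--Turrittin decomposition to reduce to elementary sections annihilated by $(\tau\partial_\tau+c_i\tau+\alpha)^m$, observes that $e^{\ov{t\tau}-t\tau}\wt\varh'(n,\ov{n'})$ is then killed by $(\tau\partial_\tau)^m$, $(\ov\tau\ov\partial_\tau)^m$, $\partial_t$ and $\ov\partial_t$, hence is $t$-independent of a very constrained form, and concludes by a dichotomy on $\reel((t-c_i)\tau-\ov{(t-c_j)\tau})$: either the value is a genuine $C^\infty$ function with rapid decay along $S^1_{\infty_\tau}\times Y$, or it cannot extend as a moderate distribution unless it vanishes identically. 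The paper's argument buys the stronger conclusion that the values are actual $C^\infty$ functions infinitely flat along the boundary, and makes the role of the exponential factors transparent; yours is more elementary and avoids the reduction to elementary sections, at the price of the functional-analytic bookkeeping you correctly flag (uniformity in $t$ of the rapid-decay estimates and of all $t$-derivatives, plus the Fubini-type interchange identifying the product distribution with the integral of the family $T_t$ --- both standard, but worth writing out since the asymmetry $\cA^{\rd\infty_\tau}$ versus $\cA^{\rmod\infty_\tau}$ in the two slots is exactly what makes the integrand admissible).
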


\begin{proof}
The second part of the statement is clear once we have shown the first part, that we consider now. Notice first that it is a local statement.

Firstly, on $\Afu_\tau\times Y$ the statement is clear, since $e^{\ov{t\tau}-t\tau}\wt\varh'$, when expressed on sections $n$ of $\cN$ and $n'$ of $\iota^+\cN$, takes values in distributions annihilated by $\partial_t$ and $\ov\partial_t$.

We will thus consider the statement locally near $S^1_{\infty_\tau}\times Y$ and \replacedo{we}{will} will show that, locally near $(\addedo{e^{i\theta_o}},c)\in S^1_{\infty_\tau}\times Y$, $\wt\varh'$ takes values in $C^\infty$ functions which are infinitely flat along $S^1_{\infty_\tau}\times Y$, by analyzing the differential equations satisfied by $\wt\varh'(n,\ov n')$. By using the Hukuhara-Turrittin theorem for $\cN$ at $\tau=\infty$ (\cf\eg\cite[Appendix]{Malgrange91}), we are reduced to evaluating $\wt\varh'$ on sections $n,n'$ which are solutions of
\[
(\tau\partial_\tau+c_i\tau+\alpha)^{\addedo{m}}n=0,\quad(\tau\partial_\tau+c_j\tau+\alpha')^{\addedo{m}}n'=0,\qquad c_i,c_j\in C,\ \alpha,\alpha'\in\CC,\ \addedo{m}\gg0.
\]
When restricted to $\tau\neq\infty_\tau$, $\wt\varh(n,\ov n')\defin\wt\varh_0(n,\ov n')$ is a $C^\infty$ function and the function $\wt\varh\big((e^{c_i\tau}\tau^\alpha n,\ov{(e^{c_j\tau}\tau^{\alpha'}n'})\big)$ is annihilated by $(\tau\partial_\tau)^{\addedo{m}}$ and $(\ov\tau\ov\partial_\tau)^{\addedo{m}}$.

Similarly, $e^{\addedo{\ov{t\tau}-t\tau}}\wt\varh'\big((e^{c_i\tau}\tau^\alpha n,\ov{(e^{c_j\tau}\tau^{\alpha'}n'})\big)$ is annihilated by $(\tau\partial_\tau)^{\addedo{m}}$, $(\ov\tau\ov\partial_\tau)^{\addedo{m}}$, $\partial_t$ and~$\ov\partial_t$. Therefore, this function does not depend on $t$ and has moderate growth in some neighbourhood of $(\addedo{e^{i\theta_o}},c)$.

If $\reel((c-c_i)e^{i\theta_o}-\ov{(c-c_j)e^{i\theta_o}})<0$, then in some neighbourhood of $(\addedo{e^{i\theta_o}},c)$ we have $\reel((t-c_i)\tau-\ov{(t-c_j)\tau})<0$, and $\wt\varh'(n,\ov n')$ is a $C^\infty$ function with rapid decay along $S^1_{\infty_\tau}\times Y$ on this neighbourhood, as wanted.

Otherwise, there is an open set of $\Afu_\tau\times Y$ containing $(\addedo{e^{i\theta_o}},c)$ in its closure, on which $\reel((t-c_i)\tau-\ov{(t-c_j)\tau})>0$, hence on which the function $\wt\varh'(n,\ov n')$ cannot be extended as a distribution (and even a moderate distribution), unless it is identically zero. In such a case, the desired statement trivially holds.
\end{proof}

\subsection{The de~Rham complexes}
In the neighbourhood of $\infty_\tau\times Y$, $p^+\cN\otimes\cE^{t\tau}$ is a meromorphic bundle with connection. Since $t$ varies in $Y=\Afu_t\moins C$, this meromorphic bundle with connection is good, in the sense of \cite[\S3.2]{Malgrange95}. It follows, by an extension of the result in dimension one, proved in \cite[Appendix~1]{Malgrange91} for instance, that the moderate and rapid decay de~Rham complexes $\DR^{\rd\infty_\tau}(p^+\cN\otimes\cE^{t\tau})$ and $\DR^{\rmod\infty_\tau}(p^+\cN\otimes\cE^{t\tau})$ have cohomology in degree zero at most in the neighbourhood of $\infty_\tau\times Y$. On the other hand, on $\Afu_\tau\times Y$, $p^+\cN\otimes\cE^{t\tau}$ is isomorphic to $p^+\cN$, and since $\DR^\an\cN$ has cohomology in degree $0$ at most (because $\cN$ is assumed to be a minimal extension at $\tau=0$), so does $\DR^\an p^+\cN$. Therefore, both complexes $\DR^{\rd\infty_\tau}(p^+\cN\otimes\cE^{t\tau})$ and $\DR^{\rmod\infty_\tau}(p^+\cN\otimes\cE^{t\tau})$ have cohomology in degree zero at most.

Recall also (\cf Proposition \ref{prop:RH}) that, for each $c\in Y$, the complexes $\DR_{<c}N$ and $\DR_{\leq c}N$ have cohomology in degree $0$ at most.

\begin{proposition}\label{prop:evalc}
For each $c\in Y$, there are functorial morphisms
\begin{align*}
i_c^{-1}\cH^0(\DR^{\rd\infty_\tau}(p^+\cN\otimes\cE^{t\tau}))&\to\cH^0(\DR_{<c}N)\\
\tag*{and}
i_c^{-1}\cH^0(\DR^{\rmod\infty_\tau}(p^+\cN\otimes\cE^{t\tau}))&\to\cH^0(\DR_{\leq c}N)
\end{align*}
which are isomorphisms, where $i_c:\wt\PP^1_\tau\times\{c\}\hto\wt\PP^1_\tau\times Y$ denotes the inclusion.
\end{proposition}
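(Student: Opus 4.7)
The proposition is a restriction isomorphism, at $t=c$, from the $\cH^0$ of a relative (in $t$) de~Rham complex to the $\cH^0$ of the absolute de~Rham complex of the fiber. Both complexes have cohomology in degree~$0$ only (the relative one by the paragraph preceding the statement, the absolute one by Proposition~\ref{prop:RH}), so the claim is a sheaf-theoretic comparison which is purely local on $\wt\PP^1_\tau$. My plan is therefore to construct the natural restriction map at the level of complexes, using the canonical identifications $(p^+\cN\otimes\cE^{t\tau})_{|t=c}\simeq\cN\otimes\cE^{c\tau}$ and $\DR_{<c}(N)\simeq\DR^{\rd\infty}(\cE^{c\tau}\otimes N)$ (respectively $\DR_{\leq c}(N)\simeq\DR^{\rmod\infty}(\cE^{c\tau}\otimes N)$) noted in \S\ref{subsec:RH}, and then to verify the isomorphism separately on $\Afu_\tau$ and on $S^1_{\infty_\tau}$.

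On $\Afu_\tau\subset\wt\PP^1_\tau$ the growth condition is empty, and $p^+\cN\otimes\cE^{t\tau}$ is just $p^*\cN$ equipped with the connection $p^*\nabla+\tau dt+td\tau$, whose fiber at $t=c$ is $\cN\otimes\cE^{c\tau}$. The $\cH^0$ of the $\tau$-de~Rham complex is the sheaf of $\tau$-flat sections, which by minimality of~$\cN$ at $\tau=0$ is~$j_{0,*}$ of a local system on $(\Afu_\tau\moins\{0\})\times Y$ (resp.~$\Afu_\tau\moins\{0\}$); restriction at~$t=c$ is then manifestly an isomorphism, both away from and at $\tau=0$.

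The delicate point is the comparison near $S^1_{\infty_\tau}\times\{c\}$. The bundle $p^+\cN\otimes\cE^{t\tau}$ is a good meromorphic connection along $\infty_\tau\times Y$ in the sense of Malgrange, with local exponential factors depending linearly on~$t$ (obtained by adding $-t\tau$ to the exponential factors $-c_i\tau$ of~$\cN$). The plan is to invoke the parametric Hukuhara--Turrittin theorem to obtain, on a small neighborhood $\wt U\times V$ of a point $(e^{i\theta_o},c)\in S^1_{\infty_\tau}\times Y$, an analytic splitting
\[
\cA^{\rd\infty_\tau}_{\wt\PP^1_\tau\times Y}\otimes\varpi^{-1}(p^+\cN\otimes\cE^{t\tau})\simeq\tbigoplus_{i}\cA^{\rd\infty_\tau}_{\wt\PP^1_\tau\times Y}\otimes\cE_i(t,\tau)\otimes\varpi^{-1}R_i,
\]
and a parallel splitting for $\cA^{\rmod\infty_\tau}_{\wt\PP^1_\tau\times Y}$, where each $\cE_i(t,\tau)$ is an elementary exponential factor (linear in~$t$) and each $R_i$ is a regular meromorphic connection in~$\tau$, holomorphic in the parameter~$t$. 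For each summand, $\cH^0\DR^{\rd\infty_\tau}$ is supported precisely on the open subset of $\wt U\times V$ where the exponential $\cE_i(t,\tau)$ has rapid decay towards $S^1_{\infty_\tau}$, and is there the pull-back of the local system of flat sections of~$R_i$; the analogous statement with strict inequalities replaced by non-strict ones holds for $\cH^0\DR^{\rmod\infty_\tau}$. Specializing $t=c$, each such open sector matches exactly the sector attached to the corresponding exponential factor of $\cN\otimes\cE^{c\tau}$ in the Stokes structure at $\tau=\infty$, so the restriction morphism is an isomorphism summand by summand, hence globally.

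The main obstacle is the parametric Hukuhara--Turrittin decomposition invoked above: while the \emph{formal} decomposition is immediate from the Levelt--Turrittin decomposition of the fiber at $t=c$, lifting it to a holomorphic sectorial splitting over a neighborhood of $(e^{i\theta_o},c)$ in $\wt\PP^1_\tau\times Y$ requires a family (parametric) version of Malgrange's asymptotic existence theorem (\cite[Appendix~1]{Malgrange91}); since the connection is \emph{good} in Malgrange's sense with~$t$ a holomorphic parameter, such a version is available, and the sector controlling rapid decay (resp.~moderate growth) varies analytically with $t\in V$ and specializes at $t=c$ to the sector appearing in the definition of $\DR_{<c}(N)$ (resp.~$\DR_{\leq c}(N)$). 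Once this lifting is granted, the summand-by-summand verification completes the proof.
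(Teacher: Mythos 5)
Your proposal is correct and follows the same overall strategy as the paper: build the restriction morphism at the level of complexes, observe that both sides are concentrated in degree $0$, check the isomorphism separately on $\Afu_\tau\times Y$ and near $S^1_{\infty_\tau}\times\{c\}$, and in the latter case split into elementary summands and compare the growth of the exponentials at $t=c$ versus $t$ near $c$. The one place where you diverge is the key local input near $S^1_{\infty_\tau}\times\{c\}$: you invoke a \emph{parametric} Hukuhara--Turrittin decomposition, hence a family version of the asymptotic existence theorem for the good connection $p^+\cN\otimes\cE^{t\tau}$. The paper avoids this heavier tool: since the $t$-dependence of $p^+\cN\otimes\cE^{t\tau}$ is entirely confined to the elementary twist $\cE^{t\tau}$, the ordinary one-variable sectorial decomposition of $\cN$ near a direction $e^{i\theta_o}$ at $\tau=\infty$ (reducing by functoriality to $\cN=\cN_\alpha\otimes\cE^{-c_i\tau}$) pulls back to the product and already yields the splitting you want, with summands $\cE^{(t-c_i)\tau}\otimes p^+\cN_\alpha$; the whole comparison then reduces to the elementary observation that $e^{(t-c_i)\tau}$ has moderate growth (equivalently, since $c\neq c_i$, rapid decay) near $(e^{i\theta_o},c)$ if and only if $e^{(c-c_i)\tau}$ does near $e^{i\theta_o}$. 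Your route is legitimate --- goodness in Malgrange's sense is exactly the hypothesis under which the multivariable asymptotic theory applies, and the paper itself uses goodness to get the degree-$0$ concentration --- but it outsources the main step to a substantially deeper theorem than is needed here. One further small point: to define the morphism cleanly you should, as the paper does, pass through the \emph{relative} de~Rham complex (forms in $\tau$ only), evaluate coefficients at $t=c$ (which is licit because a section of $e^{t\tau}\cA^{\rmod\infty_\tau}_{\wt\PP^1_\tau\times Y}$ restricts to one of $e^{c\tau}\cA^{\rmod\infty_\tau}_{\wt\PP^1_\tau}$), and identify $i_c^{-1}\cH^0\DR^{\rmod\infty_\tau}(p^+\cN\otimes\cE^{t\tau})$ with the kernel of $\partial_t$ acting on the $\cH^0$ of the restricted relative complex; your phrase ``restriction at the level of complexes'' glosses over the fact that the absolute complex contains $dt$-components.
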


\begin{proof}
We will show the proposition in the moderate case, the rapid decay case being similar. Let us denote by $\DR^{\rmod\infty_\tau}_{\text{rel}}$ the relative de~Rham complex (with differential forms relative to the projection $\wt\PP^1_\tau\times Y\to Y$ only). Evaluating the coefficients at $t=c$ induces a natural morphism of complexes $i_c^{-1}\DR^{\rmod\infty_\tau}_{\text{rel}}(p^+\cN\otimes\cE^{t\tau})\to\DR_{\leq c}N$, since the evaluation at $t=c$ of a section of $e^{t\tau}\cA^{\rmod\infty_\tau}_{\wt\PP^1_\tau\times Y}$ belongs to $e^{c\tau}\cA^{\rmod\infty_\tau}_{\wt\PP^1_\tau}$.

We have a natural action of $\partial_t$ on $i_c^{-1}\DR^{\rmod\infty_\tau}_{\text{rel}}(p^+\cN\otimes\cE^{t\tau})$, and the kernel of~$\partial_t$ on its $\cH^0$ is equal to $i_c^{-1}\cH^0\DR^{\rmod\infty_\tau}(p^+\cN\otimes\cE^{t\tau})$, hence we obtain the desired morphism to $\cH^0(\DR_{\leq c}N)$. To show that this morphism is an isomorphism is now a local question on $\wt\PP^1_\tau\times Y$.

On $\Afu_\tau\times Y$, the result is clear since $p^+\cN\otimes\cE^{t\tau}\simeq p^+\cN$. Near $(\addedo{e^{i\theta_o}},c)\in S^1_{\infty_\tau}\times Y$, we can reduce, by functoriality, that $\cN=\cN_\alpha\otimes\cE^{\addedo{-}c_i\tau}$, with $c_i\in C$ and $\cN_\alpha=(\cO_{\PP^1_\tau}(*\infty_\tau),d+\alpha d\tau/\tau)$. One is reduced to showing that $e^{(t-c_i)\tau}$ has moderate growth along $S^1_{\infty_\tau}\times Y$ near $(\addedo{e^{i\theta_o}},c)$ if and only if $e^{(c-c_i)\tau}$ has moderate growth along $S^1_{\infty_\tau}$ near $\addedo{e^{i\theta_o}}$. This is clear, and also equivalent to the fact that both functions have rapid decay, since $c\neq c_i$.
\end{proof}

From $\wt\varh'$ we derive $\wt\varh'_{\DR}$, which is a pairing of double complexes
\[
\wt\varh'_{\DR}:\DR^{\rd\infty_\tau}(p^+\cN\otimes\cE^{t\tau})\otimes\ov{\DR^{\rmod\infty_\tau}\iota^+(p^+\cN\otimes\cE^{t\tau})}\to\Db_{\wt\PP^1_\tau\times Y/Y}^{\rd\infty_\tau,(\cbbullet,\cbbullet)}.
\]
The de~Rham complex of currents which are $C^\infty$ with respect to $Y$ and have rapid decay along $S^1_{\infty_\tau}\times Y$, which is the simple complex associate to the Dolbeault complex $\Db_{\wt\PP^1_\tau\times Y/Y}^{\rd\infty_\tau,(\cbbullet,\cbbullet)}$, is a resolution of $j_{\infty,!}\CC_{\Afu_\tau\times Y}$.

By Proposition \ref{prop:evalc}, applying the evaluation at $t=c$ to $\wt\varh'_{\DR}$ defines a pairing $\wt\varh'_{\DR,c}$ which is nothing but the pairing $\varh_{\DR,c}$ considered in \eqref{eq:varhc}.

\subsection{End of the proof of Lemma \ref{lem:hBc}}
From Lemma \ref{lem:compvarh} we deduce that $\addedo{\wh\varh_{\DR,c}}=(q_+\varh')_{\DR,c}$. Denoting by $\wt q$ the projection $\wt\PP^1_\tau\times Y\to Y$, we also obtain
\[
\addedo{\wh\varh_{\DR,c}}=(\wt q_+\wt\varh')_{\DR,c},
\]
where $\wt q_+$ denotes the integration of currents along the fibres of $\wt q$. Integration of currents of any degree is compatible with the differentials, therefore $(\wt q_+\wt\varh')_{\DR,c}=(\wt q_+\wt\varh'_{\DR})_c$. On the other hand, evaluation at $t=c$ is compatible with the integration of currents which are $C^\infty$ with respect to $Y$, so we finally get
\[
\addedo{\wh\varh_{\DR,c}}=(\wt q_+\wt\varh'_{\DR,c}).
\]
On the other hand, we have seen above that $\wt\varh'_{\DR,c}$ is identified with $\varh_{\DR,c}$, and its integration $\wt q_+\wt\varh'_{\DR,c}$ is nothing but the pairing induced on the cohomology, that is, $\wh{\varh_{\DR,c}}$.\qed

\backmatter

\end{document}